\documentclass[12pt]{article}

\pdfoutput=1

\usepackage{amsmath, amssymb, amsthm, enumerate, fullpage, bm}
\usepackage{verbatim, enumitem, bbm, etoolbox}
\usepackage{latexsym}

\newcommand{\ilim}{{\varprojlim}}

\makeatletter
\newcommand{\dlim@}[2]{%
	\vtop{\m@th\ialign{##\cr
			\hfil$#1\operator@font lim$\hfil\cr
			\noalign{\nointerlineskip\kern1.5\ex@}#2\cr
			\noalign{\nointerlineskip\kern-\ex@}\cr}}%
}
\newcommand{\dlim}{%
	\mathop{\mathpalette\dlim@{\rightarrowfill@\scriptscriptstyle}}\nmlimits@
}
\renewcommand{\varprojlim}{%
	\mathop{\mathpalette\varlim@{\leftarrowfill@\scriptscriptstyle}}\nmlimits@
}
\renewcommand{\varinjlim}{%
	\mathop{\mathpalette\varlim@{\rightarrowfill@\scriptscriptstyle}}\nmlimits@
}
\makeatother

\newcommand{\MM}{{\mathfrak{M}}}

\theoremstyle{definition}
\newtheorem{thm}{Theorem}[section]
\newtheorem{theorem}[thm]{Theorem}

\newtheorem{lemma}[thm]{Lemma}

\numberwithin{subcase}{case}

\theoremstyle{definition}
\newtheorem{definition}[thm]{Definition}
\newtheorem{corollary}[thm]{Corollary}

\def\forkindep{\mathrel{\raise0.2ex\hbox{\ooalign{\hidewidth$\vert$\hidewidth\cr\raise-0.9ex\hbox{$\smile$}}}}}

\def\P{{\cal P}}

%forking / nonforking commands!
\def\Ind{\setbox0=\hbox{$x$}\kern\wd0\hbox to 0pt{\hss$\mid$\hss}
	\lower.9\ht0\hbox to 0pt{\hss$\smile$\hss}\kern\wd0}

\def\Notind{\setbox0=\hbox{$x$}\kern\wd0\hbox to 0pt{\mathchardef
		\nn=12854\hss$\nn$\kern1.4\wd0\hss}\hbox to 0pt{\hss$\mid$\hss}\lower.9\ht0
	\hbox to 0pt{\hss$\smile$\hss}\kern\wd0}

\newcommand{\ptl}{\textrm{ptl}}

\newcommand{\CSS}{\textrm{CSS}}
\newcommand{\css}{\textrm{css}}

\newcommand{\Aut}{\textrm{Aut}}

\def\phi{\varphi}

\def\<{\langle}
\def\>{\rangle}

\makeatletter
\def\blfootnote{\xdef\@thefnmark{}\@footnotetext}
\makeatother

\begin{document}	

	\bibliographystyle{plain}

	\title{Characterizing Borel Isomorphism Among Some Weakly Minimal Trivial Theories}
	\date{\today} 
 \author{Danielle S.\ Ulrich
 \thanks{The author was partially supported
by NSF grants DMS-1855789 and DMS-2154101.}
\\
Department of Mathematics\\University of Maryland
}

	\maketitle

 \begin{abstract}
We characterize having Borel isomorphism relation among some weakly minimal trivial theories, namely the examples of families of finite equivalence relations from \cite{BorelComplexityP}, and tame expansions of crosscutting-equivalence relations. We also prove a dichotomy in Borel complexity for the latter.
\end{abstract}
\section{Introduction}

We are interested in proving dividing lines for countable model theory to characterize Borel complexity. We begin by looking at weakly minimal trivial theories. A countable complete theory $T$ is weakly minimal if $T$ is stable and for all $A \subseteq B$, if $\mbox{tp}(c/B)$ forks over $A$ then $c \in \mbox{acl}(B)$. If $T$ is weakly minimal, then $T$ is trivial if $\mbox{acl}(B) = \bigcup_{b \in B} \mbox{acl}(b)$ for every set $B$. The countable model theory of weakly minimal theories is plausibly tractable. For instance, Vaught's conjecture has been resolved in this context; in fact, Vaught's conjecture has been resolved for superstable theories of finite rank \cite{VaughtSSFR}. 

In \cite{BorelComplexityP} (joint with Laskowski) we introduce a class of weakly minimal trivial theories, which are in some sense the purest such theories:

Let $\mathbb{P}$ be the class of all triples $(P, \leq, \delta)$ where $(P, \leq)$ is a countable partial order, and for all $p \in P$ there are only finitely many $q \in P$ with $q \leq p$, and finally $\delta: P \to \omega \backslash \{0, 1\}$. To each $(P, \leq, \delta) \in \mathbb{P}$ we associate a weakly minimal trivial theory $T(P, \leq, \delta)$ (typically denoted $T_P$ when $\leq, \delta$ are clear from context). The language is $\mathcal{L}_P = \{E_p: p \in P\}$ and $T_P$ asserts that each $E_p$ is an equivalence relation with splitting determined by $\delta$; it follows that each $E_p$ has finitely many classes.

In \cite{BorelComplexityP} we characterize, under the presence of large cardinals, which of these theories are Borel complete. We also introduce the notion of tame expansions, and characterize when $T_P$ has a tame expansion which is Borel complete.

In the present work we are particularly interested in tame expansions of crosscutting equivalence relations. These can alternately be viewed as products of finite structures, for a peculiar notion of product: suppose $(M_n: n < \omega)$ are finite structures in disjoint relational languages $\mathcal{L}_n$, each of size at least two. Let $\mathcal{L} =  \bigcup_n \mathcal{L}_n \cup \{E_n\}$ where $E_n$ is a new equivalence relation. Turn $\prod_n M_n$ into an $\mathcal{L}$-structure (also denoted $\prod_n M_n$) as follows: for each $m$ let $\prod_n M_n \models E_m(a, b)$ if and only if $a(m) = b(m)$, and for $R(x_i: i < i_*) \in \mathcal{L}_m$ let $\prod_n M_n \models R(a_i: i < i_*)$ if and only if $M_m \models R(a_i(m): i < i_*)$. We let $T_{\overline{M}}$ be the theory of $\prod_n M_n$, which is weakly minimal trivial. These theories $T_{\overline{M}}$ are exactly the tame expansions of crosscutting equivalence relations.

Crosscutting equivalence relations can be recovered as the case when each $M_n$ is a set with no extra structure. In that case, in \cite{BorelComplexityP} we showed that $T_{\overline{M}}$ is Borel complete if and only if $|M_n|: n < \omega$ is bounded below $\omega$. When $M_n$ is allowed to have extra structure, this need no longer be the case; indeed, let $M_n$ be a set of size $n$, equipped with a cyclic order. Then it is easy to check that $T_{\overline{M}}$ has Borel isomorphism relation, and thus cannot be Borel complete.

We show:

\begin{theorem}
Suppose $M_n: n < \omega$ are finite structures in disjoint relational languages, each of size at least two. Then either $\|T_{\overline{M}}\| < \beth_{\omega_1}$ or else $T_{\overline{M}}$ is Borel complete.
\end{theorem}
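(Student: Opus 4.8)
The plan is to first give a concrete invariant for countable models of $T_{\overline{M}}$ and then read off the dichotomy from the combinatorics of the sequence $(M_n, \Aut(M_n))$. A model $N \model T_{\overline{M}}$ is determined, up to isomorphism, by the following data: each $E_n$ is named, so every isomorphism induces an isomorphism of the quotient $N/E_n$, which is an $\mathcal{L}_n$-structure isomorphic to $M_n$; assigning to $a \in N$ its sequence of $E_n$-classes identifies $N$ with a subset $S \subseteq \prod_n M_n$ carrying a multiplicity function, where the relational structure is pulled back coordinatewise. The only ambiguity is the labeling of each quotient by $M_n$, which is canonical only up to $\Aut(M_n)$; hence two models are isomorphic exactly when the corresponding multiplicity-weighted subsets lie in the same orbit of the coordinatewise action of $\prod_n \Aut(M_n)$ on weighted subsets of $\prod_n M_n$. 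First I would make this precise, including the extension to models of arbitrary cardinality, so that $\|T_{\overline{M}}\|$ becomes a count of orbits of $\prod_n \Aut(M_n)$ acting on weighted subsets of $\prod_n M_n$ of the appropriate size.

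The dividing line should be a combinatorial condition $(\ast)$ on $(M_n)$ asserting, roughly, that infinitely many of the $M_n$ contain a uniformly bounded, $\Aut(M_n)$-symmetric configuration supporting a free binary choice, this being the feature that makes the bounded pure crosscutting theories Borel complete in \cite{BorelComplexityP}. In the negative case $\neg(\ast)$, I would argue that the pattern of coordinatewise agreement between sequences, together with the finite local structure each $M_n$ imposes, is the only invariant, and that as the $M_n$ become rigid or large these invariants become individually recoverable. The goal is then to code the isomorphism type of $N$ by a canonical Scott sentence lying in $H(\beth_\alpha)$ for some fixed $\alpha < \omega_1$, and, using absoluteness of the invariant, to bound the number of such sentences by a cardinal below $\beth_{\omega_1}$; this is the bookkeeping half of the argument, and I expect it to be routine once $(\ast)$ is correctly formulated.

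In the positive case $(\ast)$, I would pass to an infinite set $I$ of coordinates witnessing it and build an explicit Borel reduction from a known Borel complete class, most naturally the bounded crosscutting theory (equivalently, graphs), into $T_{\overline{M}}$, using the bounded symmetric configurations on $\{M_n : n \in I\}$ to simulate $2$-element crosscutting equivalence relations while sending the remaining coordinates to a fixed rigid value, giving $\cong_{\mathrm{graphs}} \borelleq \cong_{T_{\overline{M}}}$. The main obstacle is exactly this construction: one must ensure the coding is faithful modulo the full relabeling group $\prod_n \Aut(M_n)$, i.e.\ that distinct inputs yield non-isomorphic models and isomorphic inputs yield isomorphic models, which requires that the chosen configurations be such that no automorphism of $M_n$ can collapse the encoded data. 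Verifying this invariance against the groups $\Aut(M_n)$, and simultaneously pinning down the exact form of $(\ast)$ so that its failure genuinely forces $\|T_{\overline{M}}\| < \beth_{\omega_1}$, is where the real work lies; the two halves must be designed in tandem so that $(\ast)$ and $\neg(\ast)$ partition all admissible sequences $(M_n)$.
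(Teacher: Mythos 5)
There is a genuine gap, in two places. First, your whole argument is organized around a combinatorial condition $(\ast)$ on the sequence $(M_n)$ that you never formulate, and you concede that pinning it down ``is where the real work lies.'' The paper does not have, and does not need, such a condition: its dividing line is the abstract statement that $S_\infty$ divides $\Aut(M)$ for some countable $M \models \Phi_{\overline{M}}$. On the positive side of that dichotomy it extracts absolutely indiscernible sets, shows they must be $E$-large for the almost-equality relation $E(a,b) \Leftrightarrow a(n)=b(n)$ for all but finitely many $n$ (the $E$-small case being ruled out because $E$-classes have cli automorphism groups), splits the coordinates into two infinite pieces to manufacture cross-cutting absolutely indiscernible sets, and concludes Borel completeness from Theorem 2.1.3 of \cite{BorelComplexityP}. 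Nothing in your sketch explains why the failure of your unstated $(\ast)$ would coincide with the failure of $S_\infty$-dividing, nor why ``a bounded $\Aut(M_n)$-symmetric configuration supporting a free binary choice'' is the right notion; the cyclic-order example in the introduction already shows that the naive boundedness condition from the pure crosscutting case is not the correct invariant once the $M_n$ carry structure, so identifying the dividing line is not a detail one can defer.

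Second, the ``bookkeeping half'' is not routine. Your orbit description of countable models (weighted dense subsets of $\prod_n M_n$ modulo the coordinatewise action of $\prod_n \Aut(M_n)$) is essentially correct, but $\|T_{\overline{M}}\|$ counts potential Scott sentences across all forcing extensions, where $\prod_n M_n$ and its family of subsets grow without bound; a direct count of orbits of arbitrary weighted subsets under a group of size continuum does not visibly stay below $\beth_{\omega_1}$, and you give no mechanism for producing a Scott sentence in $H(\beth_\alpha)$ for fixed $\alpha$. In the paper this bound is precisely the content of Theorem 5.5 and Corollary 6.1 of \cite{expansions}, applied after showing that $S_\infty$ divides no $\Aut(M)$ for $M$ a countable model of $\mathcal{C}(\Phi_{\overline{M}})$; that is a substantial external theorem, not bookkeeping. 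So both halves of your proposed dichotomy are missing their key inputs, and the glue between them (that $(\ast)$ and $\neg(\ast)$ partition the admissible sequences in the right way) is asserted rather than argued.
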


Here $\|T_{\overline{M}}\|$ is the potential cardinality of $T_{\overline{M}}$, a cardinal invariant of Borel complexity.

We move on to investigate back-and-forth complexity. The Scott rank of a model $M$, denoted $\mbox{sr}(M)$, has multiple definitions. We work with the definition most convenient for our purposes, as in \cite{Marker} \cite{Gao}, namely $\mbox{sr}(M)$ is the least ordinal $\alpha$ such that for all $\overline{a}, \overline{b} \in  M^{<\omega}$, if $(M, \overline{a}) \equiv_\alpha (M, \overline{b})$ then $(M, \overline{a}) \equiv_{\infty} (M, \overline{b})$. For a sentence $\Phi \in \mathcal{L}_{\omega_1 \omega}$ we have that $\cong_\Phi$ is Borel if and only if the Scott ranks of countable models of $\Phi$ are bounded below $\omega_1$. An easy absoluteness argument gives that $\cong_\Phi$ is Borel if and only if the Scott ranks of {\em all} models of $\Phi$ are bounded below $\omega_1$. Let ${\mbox{sr}}(\Phi)$ denote the least ordinal $\alpha$, if it exists, such that for all $M \models \Phi$, $\mbox{sr}(M) < \alpha$. Thus $\cong_\Phi$ is Borel if and only if ${\mbox{sr}}(\Phi) < \omega_1$.

In all of our examples $T$, we will have ${\mbox{sr}}(T) \leq (2^{\aleph_0})^+$, since every model of $T$ is back-and-forth equivalent to a model of size at most $2^{\aleph_0}$. We consider it interesting to compute $\mbox{sr}(\Phi)$, especially when $\cong_\Phi$ is non-Borel, since in this case $\mbox{sr}(\Phi)$ can be viewed as measuring how badly $\cong_\Phi$ fails to be Borel.

We show the following, where an action of a group $G$ on a set $X$ is free if for all $g \in G$ and $x \in X$, if $g\cdot x = x$ then $g = 1^G$.

\begin{theorem}\label{TameExpThm}
    Suppose $M_n: n < \omega$ are finite structures in disjoint relational languages, each of size at least two. Let $G_n = \mbox{Aut}(M_n)$, a finite permutation group. Then $\cong_{T_{\overline{M}}}$ is Borel if and only if for all but finitely many $n$, the action of $G_n$ on $M_n$ is free.

    If $\cong_{T_{\overline{M}}}$ is non-Borel then $\mbox{sr}(T_{\overline{M}}) = (2^{\aleph_0})^+$.
\end{theorem}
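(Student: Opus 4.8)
The plan is to first reduce the isomorphism problem to an orbit equivalence relation for the profinite (hence compact) group $\Gamma = \prod_n G_n$. Since each $E_n$ is $\emptyset$-definable, any isomorphism induces on the quotient by $E_n$ an automorphism of $M_n$, so it is determined by an element $\gamma = (\gamma_n) \in \Gamma$ together with a bijection of the $\bigwedge_n E_n$-classes (the fibres). Concretely, after fixing identifications of the $E_n$-quotients with $M_n$, a countable model $N$ is coded by a \emph{generic} multiplicity function $\mu \colon \prod_n M_n \to \omega+1$, where $\mu(x)$ counts the elements whose $E_n$-class is $x_n$ for every $n$ and ``generic'' means every finitely supported condition has infinitely many realizations; and $N \cong N'$ iff $\mu' = \gamma \cdot \mu$ for some $\gamma \in \Gamma$ acting coordinatewise. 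The decisive parameter is the point stabilizer $\mathrm{Stab}_\Gamma(x) = \prod_n \mathrm{Stab}_{G_n}(x_n)$, which is finite for every $x$ precisely when $\mathrm{Stab}_{G_n}(y) = 1$ for all $y \in M_n$ and all but finitely many $n$, i.e.\ precisely when $G_n$ acts freely on $M_n$ for all but finitely many $n$.

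For the direction that freeness on a cofinite set implies $\cong_{T_{\overline M}}$ is Borel, I would show that $\mathrm{sr}(T_{\overline M})$ is bounded by a countable ordinal. Write $\Gamma = \Gamma_0 \times \Gamma_1$ with $\Gamma_0 = \prod_{n < N} G_n$ finite and $\Gamma_1 = \prod_{n \ge N} G_n$ acting freely on the tail $\prod_{n \ge N} M_n$. Freeness of this compact action yields a Borel transversal, so on the tail coordinates a model carries no hidden symmetry: two tuples agreeing to a fixed finite back-and-forth depth on the tail already agree on their tail-behaviour outright, while the finite head $\Gamma_0$ contributes only finitely much data. Combining these I would show that $\equiv_\alpha$ implies $\equiv_\infty$ for a single countable $\alpha$ in every model, whence $\mathrm{sr}(T_{\overline M}) < \omega_1$ and $\cong_{T_{\overline M}}$ is Borel. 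The delicate point is that the multiset stabilizer $\{\gamma : \gamma \cdot \mu = \mu\}$ may still be an infinite, even non-closed, subgroup of $\Gamma$; freeness is used at the level of \emph{points} precisely to rule out the unbounded back-and-forth that such stabilizers would otherwise permit.

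For the converse, suppose $G_n$ fails to act freely for infinitely many $n$, and fix such $n$ with $y_n \in M_n$ and $1 \ne g_n \in \mathrm{Stab}_{G_n}(y_n)$, together with $z_n$ satisfying $g_n z_n \ne z_n$. Then the point $x^*$ with $x^*_n = y_n$ on these coordinates has infinite stabilizer, and the nontrivial symmetries $g_n$ can be used to render configurations in the fibre over $x^*$ indistinguishable to finite back-and-forth depth while keeping them globally non-isomorphic. The plan is to exploit this to realize, for every $\alpha < (2^{\aleph_0})^+$, a model $N_\alpha \models T_{\overline M}$ with $\mathrm{sr}(N_\alpha) \ge \alpha$: I would code trees (equivalently, scattered linear orders) of each cardinality $\le 2^{\aleph_0}$ into the fibre/multiplicity structure so that the back-and-forth rank of the coded tree transfers to the Scott rank of $N_\alpha$, with the $g_n$-symmetries guaranteeing $(N_\alpha, \overline a) \equiv_\beta (N_\alpha, \overline b)$ for pairs that are not $\equiv_\infty$. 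Since every model of $T_{\overline M}$ is back-and-forth equivalent to one of size at most $2^{\aleph_0}$, we have $\mathrm{sr}(T_{\overline M}) \le (2^{\aleph_0})^+$, so these constructions yield $\mathrm{sr}(T_{\overline M}) = (2^{\aleph_0})^+$; in particular $\mathrm{sr}(T_{\overline M}) \ge \omega_1$, so $\cong_{T_{\overline M}}$ is non-Borel.

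The main obstacle is the lower bound in the non-free case: engineering a coding whose back-and-forth rank provably transfers to the Scott rank and is cofinal in $(2^{\aleph_0})^+$, while using only the limited symmetry supplied by the non-free coordinates and respecting the genericity constraints on admissible multiplicity functions. The Borel direction is comparatively routine once the tail action is seen to be free, but genuine care is needed to verify that the resulting rigidity bounds Scott rank despite the possibly infinite, non-closed multiset stabilizers; here the finiteness of the point stabilizers, forced by cofinite freeness, is exactly what makes the argument go through.
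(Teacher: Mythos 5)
Your characterization is the right one, and your reduction to the compact group $\Gamma=\prod_n G_n$ with its point stabilizers correctly locates where freeness enters. The Borel direction, while vaguer than it needs to be (the talk of Borel transversals is a detour), is morally the paper's argument: the paper simply observes that a finite tuple $\overline a$ whose coordinates cover each $M_n$ for the finitely many non-free $n$ is a \emph{finite base} --- distinct elements get distinct first-order types over $\overline a$, because on a free coordinate an automorphism of $M_n$ fixing $a_0(n)$ is the identity --- and a finite base forces $\mbox{sr}(M)\le\omega\cdot 3$ by Lemma~\ref{SRLemma8}. Your sketch would land in the same place once made precise.

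The genuine gap is the converse, and you have in effect conceded it in your final paragraph: ``engineering a coding whose back-and-forth rank provably transfers to the Scott rank and is cofinal in $(2^{\aleph_0})^+$'' is not an obstacle to be noted but the entire content of the hard direction, and ``code trees into the fibre/multiplicity structure'' is not yet a construction. The difficulty is that the only symmetry available is one nontrivial stabilizing element $g_n$ per bad coordinate, generating a cyclic group $C_n$, and a naive tree coding does not survive the back-and-forth analysis. The paper's mechanism is to build, for each $\alpha_*<(2^{\aleph_0})^+$, a $\overline C$-system of abelian groups $\mathbf{A}=(A_I:I\in\mathbf{I})$ --- subgroups $A_I\le\prod_{i\in I}C_i$ indexed by a directed family $\mathbf{I}\subseteq\mathcal{P}(I_*)$ obtained from a well-founded tree of size continuum via almost disjoint families --- whose extension rank $\mbox{rnk}^{\mathbf{A}}$ takes values that are large but finite. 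The model is then the finite-support elements together with the orbits $A_I+f_I$, and two transfer lemmas show $(N,f_I)\equiv_{\omega+\alpha}(N,a+f_I)$ forces $\mbox{rnk}^{\mathbf{A}}(a)\ge\alpha$ (using that $A_I$ acts regularly on $A_I+f_I$) while $\mbox{rnk}^{\mathbf{A}}(a)\ge\alpha$ forces $\equiv_\alpha$ (using directedness of $\mathbf{I}$ to extend $a$). An element $a$ with $\alpha_*\le\mbox{rnk}^{\mathbf{A}}(a)<\infty$ then witnesses Scott rank at least $\alpha_*$. Without this construction --- in particular without the almost-disjoint-family device that makes the index poset of size continuum fit inside $\mathcal{P}(\omega)$, and without the regularity of the $A_I$-action that makes the lower bound on rank provable --- your proposal does not establish $\mbox{sr}(T_{\overline M})=(2^{\aleph_0})^+$, nor even non-Borelness.
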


We use this to return to the original class of examples $T_P$:

\begin{theorem}\label{PThm}
    Suppose $(P, \leq, \delta) \in \mathbb{P}$. Then $\cong_{T_P}$ is Borel if and only if there is a finite downward-closed $Q \subseteq P$ such that $P \backslash Q$ is an antichain on which $\delta$ is identically $2$.
\end{theorem}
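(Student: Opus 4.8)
The plan is to reduce to Theorem~\ref{TameExpThm} wherever $P$ decomposes into a product, and to treat the remaining ``entangled'' configurations (infinite chains) by hand. First I would recast the combinatorial condition: the existence of a finite downward-closed $Q$ with $P\setminus Q$ an antichain on which $\delta\equiv 2$ is equivalent to the conjunction of (C1) $P$ has only finitely many non-maximal elements, and (C2) all but finitely many maximal elements $m$ have $\delta(m)=2$. Indeed, since $Q$ is downward-closed, $P\setminus Q$ is an antichain exactly when every non-maximal element lies in $Q$, which forces (C1); and $\delta\equiv 2$ on the (necessarily maximal) elements of $P\setminus Q$ is (C2) after absorbing finitely many bad maximal elements into $Q$. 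This reformulation is what I would match against the free/non-free dichotomy of Theorem~\ref{TameExpThm}, using that on a set of size two the group $S_2$ acts freely, whereas $S_k$ for $k\ge 3$ and every nontrivial iterated wreath product act non-freely.

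For the direction (C)$\,\Rightarrow\,$Borel I would take $Q$ to be the downward closure of the finite non-maximal part together with the finitely many maximal elements of $\delta\ge 3$, so that $P\setminus Q$ consists of maximal elements with $\delta=2$, each having all of its strict predecessors inside the finite set $Q$. Over the finite structure carried by $Q$ there are only finitely many complete types, contributing bounded back-and-forth rank; and relative to a fixed $Q$-type the relations $\{E_p:p\in P\setminus Q\}$ are mutually crosscutting two-class refinements, i.e.\ exactly a free product configuration. The free case of Theorem~\ref{TameExpThm} gives bounded Scott rank for such a configuration, so I would conclude $\mathrm{sr}(T_P)<\omega_1$, hence that $\cong_{T_P}$ is Borel.

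For the direction $\neg$(C)$\,\Rightarrow\,$non-Borel I would split according to how (C) fails. If (C2) fails there is an infinite antichain of maximal elements with $\delta\ge 3$; after thinning to control their common lower structure and restricting to a single class of it, I obtain a genuinely crosscutting family with at least three classes apiece, i.e.\ a $T_{\overline M}$ with $M_i$ a $\delta(m_i)$-element set and $G_i=S_{\delta(m_i)}$ acting non-freely, so Theorem~\ref{TameExpThm} makes this reduct non-Borel and the non-Borelness pulls back to $T_P$. If (C1) fails then $P$ has infinitely many non-maximal elements, and by the poset dichotomy it contains an infinite antichain or an infinite chain. In the antichain case I would pick, after thinning, successors turning the antichain into infinitely many independent two-chain blocks; this is a $T_{\overline M}$ whose $M_i$ is the four-leaf refinement with $G_i=S_2\wr S_2$ acting non-freely, again non-Borel by Theorem~\ref{TameExpThm}.

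The hard part will be the infinite-chain case, where $P$ is a single infinite component and $T_P$ is \emph{not} a finite product, so Theorem~\ref{TameExpThm} does not apply directly. Here isomorphism of countable models reduces to the action of $\mathrm{Aut}(2^{<\omega})\times S_\infty$ on weighted subsets of $2^\omega$, the profinite tree-automorphism group twisting the successive two-splits as an iterated wreath product rather than as a product. I expect to establish non-Borelness directly, either by constructing models of arbitrarily large Scott rank from this iterated-wreath action, or by Borel-reducing the independent two-chain case above into the chain: placing the $n$-th block at tree-levels $2n,2n+1$ and using generic multiplicities on the intervening subtrees to kill the cross-block wreath symmetry, leaving precisely $\prod_n (S_2\wr S_2)$ as the relevant symmetry. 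Making this rigidification precise, namely guaranteeing that the only tree automorphisms preserving the image are the intended block-wise ones, is the main technical obstacle; every other case reduces cleanly to Theorem~\ref{TameExpThm}.
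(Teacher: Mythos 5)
Your reduction of the condition to (C1)+(C2), your use of Theorem~\ref{TameExpThm} for the infinite antichain with $\delta\ge 3$ (the paper's benchmark $P_1$) and for the independent two-chain blocks with $G_i=S_2\wr S_2$ (the benchmark $P_2$), and your identification of the $\omega$-chain $\mathrm{REF(bin)}$ as a separate non-product case all match the paper's strategy. (The paper disposes of the $\omega$-chain case by citing \cite{URL} rather than reproving it, and proves the forward direction by a direct finite-base argument via Lemma~\ref{SRLemma8} rather than by invoking the free case of Theorem~\ref{TameExpThm}, which does not literally apply once $Q\neq\emptyset$; but these are cosmetic differences.)

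There is, however, a genuine gap in your treatment of the case where (C1) fails via an infinite antichain of non-maximal elements. You claim that after thinning one can always arrange the chosen successors into \emph{independent} two-chain blocks. This is false: the Ramsey argument on the pairs $(p_n,q_n)$ with $q_n>p_n$ yields, after thinning, either no relations $p_n<q_m$ for $n<m$ (your independent blocks, i.e.\ $P_2$) \emph{or} the entangled configuration $p_n<q_m$ for all $n\le m$ (the paper's benchmark $P_3$), and the latter cannot be thinned away --- take $P=P_3$ itself, which contains no $\omega$-chain, is not a product of finite structures, and survives passage to any infinite subfamily. This configuration is not covered by Theorem~\ref{TameExpThm}, nor by your iterated-wreath analysis of the infinite chain, and it is precisely the case to which the paper devotes an entire section: a bespoke construction of expansions of $2^\omega\times 2^\omega$ by coset-valued predicates $C\in\mathbf{C}$, with an ordinal rank $\mathrm{rnk}^C$ on coherent sets built by a successor step (doubling via unions of cosets over $\mathbb{F}_2$) and a limit step (interleaving countably many previously built systems), producing models of unbounded countable Scott rank. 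Without an argument for $P_3$ your case analysis is incomplete, and the theorem does not follow.
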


The proof proceeds by finding benchmarks $(P_i, \leq, \delta_i) \in \mathbb{P}$ for $i < 4$ such that whenever $\cong_{T_Q}$ is non-Borel then some $T_{P_i} \leq_B T_Q$. We do not know for general $P$ how to compute ${\mbox{sr}}(T_P)$, but at least for the benchmarks we know ${\mbox{sr}}(T_{P_1}) = {\mbox{sr}}(T_{P_2}) = (2^{\aleph_0})^+$ (in fact, they are both tame expansions of crosscutting equivalence relations, so the preceding theorem applies) and ${\mbox{sr}}(T_{P_0}) = {\mbox{sr}}(T_{P_3}) = \omega_1$.

As a particular example, let $T_k$ be the theory of crosscutting equivalence relations $E_n$ where $E_n$ has $k$-many classes. Then $\cong_{T_2}$ is Borel (this is easy), but $\cong_{T_3}$ is not Borel.

\section{Preliminaries}

\subsection{Borel Complexity}
In \cite{FS} Friedman and Stanley introduced the notion of Borel complexity. Namely, for each sentence $\Phi$ of $\mathcal{L}_{\omega_1 \omega}$ consider the Polish space $\mbox{Mod}(\Phi)$ of models of $\Phi$ of universe $\omega$. Say that $\Phi$ is Borel reducible to $\Psi$, and write $\Phi \leq_B \Psi$, if there is a Borel map $f: \mbox{Mod}(\Phi) \to \mbox{Mod}(\Psi)$ such that for all $M, N \in \mbox{Mod}(\Phi)$, $M \cong N$ if and only if $f(M) \cong f(N)$. In practice the models we deal with will rarely have universe $\omega$; however the process of choosing isomorphic copies with universe $\omega$, and then checking Borelness of the map, is always routine. Say that $\Phi$ is Borel complete if it is maximal under $\leq_B$.

To every model $M$ we have associated its canonical Scott sentence $\css(M)$, which characterizes $M$ up to back-and-forth equivalence; when $M$ is countable, $\css(M)$ thus characterizes $M$ up to isomorphism. 

If $\Phi$ is a sentence of $\mathcal{L}_{\omega_1 \omega}$ then we let $\CSS(\Phi)_{\ptl}$ denote the class of all potential canonical Scott sentences of $\Phi$, i.e. sentences of $\mathcal{L}_{\omega_1 \omega}$ which, in some forcing extension, are of the form $\css(M)$ for some (countable) $M$. We let $\|\Phi\|$, the potential cardinality of $\Phi$, denote the cardinality of the class $\CSS(\Phi)_{\ptl}$, possibly $\infty$. In \cite{URL} we show that if $\Phi \leq_B \Psi$ then the Borel reduction induces an injection from $\CSS(\Phi)_{\ptl}$ to $\CSS(\Psi)_{\ptl}$, hence $\|\Phi\| \leq \| \Psi\|$. Thus $\|\Phi\|$ is a cardinal invariant of the Borel complexity of $\Phi$.

\subsection{Scott Ranks}
The following definition of $\equiv_\alpha$ and Scott rank is as in \cite{Marker}:

\begin{definition}
Suppose $M$, $N$  are structures in the same language $\mathcal{L}$ and $\overline{a} \in M^{<\omega}$ and $\overline{b} \in N^{<\omega}$. Say that $(M, \overline{a}) \equiv_0 (N, \overline{b})$ if $\mbox{qftp}^M(\overline{a}) = \mbox{qftp}^N(\overline{b})$ (in particular they have the same length). For $\alpha$ an ordinal, say that $(M, \overline{a}) \equiv_{\alpha+1} (N, \overline{b})$ if for all $a \in M$ there is $b \in M$ with $(M, \overline{a}a) \equiv_{\alpha} (N, \overline{b} b)$ and vice versa. For a limit ordinal $\delta$ say that $(M, \overline{a}) \equiv_\delta (N, \overline{b})$ if for all $\alpha < \delta$, $(M, \overline{a}) \equiv_\alpha (N, \overline{b})$. Say that $(M, \overline{a}) \equiv_{\infty} (N, \overline{b})$ if $(M, \overline{a}) \equiv_\alpha (N, \overline{b})$ for all ordinals $\alpha$ (equivalently, for all $\alpha < |M|^+ + |N|^+$).

Define the Scott rank of $M$, $\mbox{sr}(M)$, to be the least ordinal $\alpha$ such that for all $\overline{a}, \overline{b} \in M^{<\omega}$ if $(M, \overline{a}) \equiv_\alpha (M, \overline{b})$ then $\overline{a} \equiv_\infty (M, \overline{b})$, so always $\mbox{sr}(M) < |M|^+$. If $\Phi \in \mathcal{L}_{\omega_1 \omega}$ then let $\mbox{sr}(\Phi)$ denote the least ordinal $\alpha$, if it exists, such that for all $M \models \Phi$, $\mbox{sr}(M) < \alpha$.
\end{definition}

There are finer ways to measure back-and-forth complexity than the rank given above; for instance there is the categoricity rank of \cite{RobusterScottRanks} and the Scott complexity of \cite{ScottComplexity}, both of which are phrased in terms of countable models but which can be generalized.

The following (with the parenthetical included) is Theorem 12.2.4 of \cite{Gao}. Here $\cong_\Phi \subseteq \mbox{Mod}(\Phi) \times \mbox{Mod}(\Phi)$ is the isomorphism relation on models of $\Phi$ with universe $\omega$; it is always analytic. The parenthetical can be removed by an absoluteness argument.
\begin{lemma}\label{SRLemma1}
Suppose $\Phi$ in $\mathcal{L}_{\omega_1 \omega}$. Then $\cong_\Phi$ is Borel if and only if there is $\alpha < \omega_1$ such that for all (countable) $M \models \Phi$, $\mbox{sr}(M) < \alpha$.
\end{lemma}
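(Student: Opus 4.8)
The plan is to prove the two implications separately; the forward implication (bounded Scott rank $\Rightarrow \cong_\Phi$ Borel) is routine, while the reverse (Borel $\Rightarrow$ bounded rank) rests on a $\Pi^1_1$-boundedness argument that I expect to be the crux.

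For the forward direction, I would first check by transfinite induction on $\gamma < \omega_1$ that the relation $\{(M, \bar a, N, \bar b) : (M, \bar a) \equiv_\gamma (N, \bar b)\}$ is Borel in the appropriate product of standard Borel spaces: the base case concerns quantifier-free types of fixed-length tuples; the successor clause is a countable conjunction-of-disjunctions (over the universe $\omega$) of the previous Borel relation; and limits are countable intersections. Now suppose $\alpha < \omega_1$ bounds $\mbox{sr}(M)$ for every countable $M \models \Phi$. Then the canonical Scott sentence $\css(M)$ of each such $M$ has quantifier rank below some fixed $\beta < \omega_1$ (one bounds $\mbox{qr}(\css(M))$ in terms of $\mbox{sr}(M) < \alpha$, e.g. by $\alpha + \omega$). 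Since truth of a sentence of quantifier rank $\le \beta$ is invariant under $\equiv_\beta$, and since $M \models \css(M)$ while $N \models \css(M) \iff N \cong M$ for countable $N$, we obtain $M \equiv_\beta N \iff M \cong N$ for all countable $M, N \models \Phi$. Hence $\cong_\Phi$ is the intersection of $\mbox{Mod}(\Phi)^2$ with the Borel relation $\equiv_\beta$, so it is Borel.

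For the reverse direction I would argue the contrapositive. Work in the standard Borel space of triples $(M, \bar a, \bar b)$ with $M \in \mbox{Mod}(\Phi)$ and $\bar a, \bar b \in M^{<\omega}$ of equal length, and let $P = \{(M, \bar a, \bar b) : (M, \bar a) \not\equiv_\infty (M, \bar b)\}$. Because $\equiv_\infty$ agrees with isomorphism on countable structures, $(M, \bar a) \equiv_\infty (M, \bar b)$ says exactly that some automorphism of $M$ carries $\bar a$ to $\bar b$, a $\Sigma^1_1$ condition, so $P$ is $\Pi^1_1$. On $P$ define $\rho(M, \bar a, \bar b)$ to be the least $\gamma$ with $(M, \bar a) \not\equiv_\gamma (M, \bar b)$; presenting the hierarchy $\equiv_\gamma$ uniformly along ordinal codes shows $\rho$ is a $\Pi^1_1$-rank. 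By the definition of Scott rank one has $\sup_P \rho = \sup\{\mbox{sr}(M) : M \in \mbox{Mod}(\Phi)\}$, so if the Scott ranks are unbounded then $\rho$ is unbounded below $\omega_1$. If $\cong_\Phi$ were Borel then the isomorphism relation on expansions by finitely many constants would also be Borel, whence $P$ would be Borel, in particular $\Sigma^1_1$; but the Boundedness Theorem forces a $\Pi^1_1$-rank to be bounded on any $\Sigma^1_1$ subset of its domain, contradicting the unboundedness of $\rho$. Thus $\cong_\Phi$ is not Borel, as required.

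The main obstacle is this last step of the reverse direction, and specifically the two ingredients that make boundedness applicable: that $\rho$ is genuinely a $\Pi^1_1$-rank, i.e. that the prewellordering relations $\le^*_\rho$ and $<^*_\rho$ are $\Pi^1_1$ and $\Sigma^1_1$ respectively, which requires a uniform Borel-in-a-code presentation of the back-and-forth hierarchy $\equiv_\gamma$ indexed by ordinal codes in Baire space; and that Borelness of $\cong_\Phi$ passes to the isomorphism relation after naming finitely many constants, so that $P$ becomes Borel. Both are standard in the descriptive set theory of the logic action, but they carry the real content; granting them, the remainder is bookkeeping with the $\equiv_\gamma$ hierarchy together with the forward direction's use of canonical Scott sentences.
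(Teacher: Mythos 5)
The paper does not actually prove this lemma: it is quoted verbatim as Theorem 12.2.4 of Gao's \emph{Invariant Descriptive Set Theory}, with the parenthetical removed by absoluteness, so there is no in-paper argument to compare against. Your proposal is essentially the standard proof of that cited theorem: Borelness of each $\equiv_\beta$ together with (bounded-rank) canonical Scott sentences for the forward direction, and $\Pi^1_1$-boundedness applied to the canonical rank on the $\Pi^1_1$ set $\{(M,\overline{a},\overline{b}) : (M,\overline{a}) \not\equiv_\infty (M,\overline{b})\}$ for the converse; both halves are correctly assembled. The one point worth stressing is that the ingredient you defer as "standard" --- that Borelness of $\cong_\Phi$ passes to the isomorphism relation of models with finitely many named constants, so that your set $P$ becomes Borel --- is not mere bookkeeping but is precisely where the cited proof does its real work (via the Vaught-transform/open-subgroup analysis of orbits of the logic action); it is a true and standard fact, so granting it your outline is sound, but a self-contained write-up would need to supply it.
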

So $\cong_\Phi$ is Borel if and only if $\mbox{sr}(\Phi) <\omega_1$.

For countable structures, the following follows from Theorem 12.1.8 of \cite{Gao}. The general case has the same proof.
\begin{lemma}\label{SRLemma2}
Suppose $M$, $N$ are $\mathcal{L}$ structures and $\alpha = \mbox{sr}(M)$ and $M \equiv_{\alpha + \omega} N$. Then $\mbox{sr}(N) = \alpha$ and $M \equiv_{\infty \omega} N$.
\end{lemma}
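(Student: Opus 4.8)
The plan is to reduce everything to two standard facts about the relations $\equiv_\beta$ and then to exploit the $\omega$ extra levels to pull finite tuples of $N$ back into $M$. First I would record that each $\equiv_\beta$ is an equivalence relation on pointed structures and is monotone in $\beta$ (if $\gamma \le \beta$ then $(M,\overline a)\equiv_\beta(N,\overline b)$ implies $(M,\overline a)\equiv_\gamma(N,\overline b)$), both by a routine induction on $\beta$. Second, I would record the within-structure stabilization principle: if, on tuples of a fixed structure $K$, one has $\equiv_\beta^K = \equiv_{\beta+1}^K$, then $\equiv_\beta^K = \equiv_\gamma^K$ for all $\gamma \ge \beta$, hence $\equiv_\beta^K = \equiv_\infty^K$; this too is a short induction on $\gamma$. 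In this language $\mbox{sr}(K) \le \beta$ is exactly the assertion $\equiv_\beta^K = \equiv_\infty^K$, so the hypothesis $\mbox{sr}(M) = \alpha$ says $\equiv_\alpha^M = \equiv_\infty^M$ (in particular $\equiv_\alpha^M = \equiv_{\alpha+1}^M$).

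The main step, and the one I expect to be the real obstacle, is to show $\mbox{sr}(N) \le \alpha$. Fix $\overline b, \overline b' \in N^k$ with $(N,\overline b) \equiv_\alpha (N,\overline b')$; the goal is $(N,\overline b)\equiv_\infty(N,\overline b')$. Here I would use the slack in $\alpha+\omega$: for each $n$, from $M \equiv_{\alpha+n+k} N$ the back-and-forth property applied $k$ times produces tuples $\overline a_n, \overline a'_n \in M^k$ with $(M,\overline a_n) \equiv_{\alpha+n}(N,\overline b)$ and $(M,\overline a'_n)\equiv_{\alpha+n}(N,\overline b')$. By monotonicity and transitivity through $(N,\overline b)\equiv_\alpha(N,\overline b')$, all of these tuples are $\equiv_\alpha$-equivalent inside $M$, so by $\mbox{sr}(M)=\alpha$ they are $\equiv_\infty$-equivalent inside $M$; in particular $(M,\overline a_n)\equiv_\infty(M,\overline a'_n)$ for each $n$. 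Chaining $(N,\overline b)\equiv_{\alpha+n}(M,\overline a_n)\equiv_\infty(M,\overline a'_n)\equiv_{\alpha+n}(N,\overline b')$ and using transitivity at level $\alpha+n$ gives $(N,\overline b)\equiv_{\alpha+n}(N,\overline b')$ for every $n$. Taking $n=1$ shows $\equiv_\alpha^N \subseteq \equiv_{\alpha+1}^N$, so they are equal, and stabilization yields $\equiv_\alpha^N = \equiv_\infty^N$, i.e. $\mbox{sr}(N) \le \alpha$. The crux is precisely this pull-back: a single finite level never suffices to copy the length-$k$ tuples $\overline b,\overline b'$ into $M$ with room to spare, but the infinitely many levels $\alpha+n$ let me do it uniformly and then collapse everything using the Scott rank of $M$.

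With $\mbox{sr}(M) \le \alpha$ and $\mbox{sr}(N) \le \alpha$ both in hand, I would finish by showing $M \equiv_\infty N$. Consider the relation $I$ consisting of all matching pairs $(\overline a, \overline b)$ with $(M,\overline a)\equiv_{\alpha+\omega}(N,\overline b)$; it contains $(\langle\rangle,\langle\rangle)$ by hypothesis, and I claim it is a back-and-forth system. For the forth step, given $(\overline a,\overline b)\in I$ and $a \in M$, each level $\alpha+n+1$ supplies some $b_n \in N$ with $(M,\overline a a)\equiv_{\alpha+n}(N,\overline b b_n)$; all the $(N,\overline b b_n)$ are $\equiv_\alpha$-equivalent (via transitivity through $(M,\overline a a)$), hence $\equiv_\infty$-equivalent since $\mbox{sr}(N)\le\alpha$, so the single choice $b := b_0$ works simultaneously at every level and $(\overline a a, \overline b b)\in I$. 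The back step is symmetric using $\mbox{sr}(M)\le\alpha$. A nonempty back-and-forth system witnesses $M \equiv_\infty N$, i.e. $M\equiv_{\infty\omega}N$. Finally, to upgrade $\mbox{sr}(N)\le\alpha$ to equality, I would reapply the second-paragraph implication with the roles of $M$ and $N$ interchanged: writing $\beta = \mbox{sr}(N) \le \alpha$, the relation $M\equiv_\infty N$ gives $N\equiv_{\beta+\omega}M$, so that argument yields $\mbox{sr}(M)\le\beta$, i.e. $\alpha\le\beta$; hence $\mbox{sr}(N)=\beta=\alpha$.
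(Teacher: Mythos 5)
Your proof is correct, and it is essentially the standard argument: the paper gives no proof of its own here, instead citing Theorem 12.1.8 of Gao, whose proof is exactly this pull-back-into-$M$ plus stabilization plus back-and-forth-system scheme. Nothing to fix.
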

We collect together some easy lemmas which are well-known but for which we could not find a direct reference.

\begin{lemma}\label{SRLemma3}
    Suppose $M$ is an $\mathcal{L}$-structure, $t(\overline{x})$ is an $\mathcal{L}$-term, and $(M, \overline{a}) \equiv_\alpha (M, \overline{b})$. Then $(M, \overline{a} t(\overline{a})) \equiv_\alpha (M, \overline{b} t(\overline{b}))$.
\end{lemma}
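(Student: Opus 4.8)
The plan is to prove the claim by induction on $\alpha$, but to make the induction close up cleanly I would first generalize it to tuples of terms. Specifically, I would prove: for all ordinals $\alpha$, all $\mathcal{L}$-structures $M, N$, all $\overline{a} \in M^{<\omega}$ and $\overline{b} \in N^{<\omega}$ with $(M, \overline{a}) \equiv_\alpha (N, \overline{b})$, and every finite tuple $\overline{t}(\overline{x}) = (t_0(\overline{x}), \dots, t_{k-1}(\overline{x}))$ of $\mathcal{L}$-terms, one has $(M, \overline{t}(\overline{a})) \equiv_\alpha (N, \overline{t}(\overline{b}))$, where $\overline{t}(\overline{a}) = (t_0(\overline{a}), \dots, t_{k-1}(\overline{a}))$. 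The statement of the lemma is the special case $N = M$ and $\overline{t} = (x_0, \dots, x_{n-1}, t(\overline{x}))$, where $n = \lg(\overline{a})$, since then $\overline{t}(\overline{a}) = \overline{a}\, t(\overline{a})$.

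For the base case $\alpha = 0$ I would observe that for any atomic formula $\psi(\overline{y})$ the composite $\psi(\overline{t}(\overline{x}))$ is again quantifier-free in $\overline{x}$; hence $M \models \psi(\overline{t}(\overline{a}))$ iff $\psi(\overline{t}(\overline{x})) \in \mbox{qftp}^M(\overline{a}) = \mbox{qftp}^N(\overline{b})$ iff $N \models \psi(\overline{t}(\overline{b}))$, so the quantifier-free types of $\overline{t}(\overline{a})$ and $\overline{t}(\overline{b})$ agree, giving $\equiv_0$. The limit case is immediate from the definition of $\equiv_\delta$ as the conjunction of the $\equiv_\alpha$ for $\alpha < \delta$.

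The successor case is where the generalization pays off. Suppose $(M, \overline{a}) \equiv_{\alpha+1} (N, \overline{b})$ and, for the forth direction, let $c \in M$ be an element appended to $\overline{t}(\overline{a})$. Applying $\equiv_{\alpha+1}$ to $\overline{a}, \overline{b}$ gives $d \in N$ with $(M, \overline{a}c) \equiv_\alpha (N, \overline{b}d)$. Now form the tuple of terms $\overline{t}' = (t_0, \dots, t_{k-1}, x_n)$ in the variables $(x_0, \dots, x_n)$, i.e.\ $\overline{t}$ together with a fresh variable placed last. Since no $t_i$ mentions $x_n$, the induction hypothesis applied to $\overline{a}c$, $\overline{b}d$ and $\overline{t}'$ yields $(M, \overline{t}(\overline{a})c) \equiv_\alpha (N, \overline{t}(\overline{b})d)$, which is exactly what is needed; the back direction is symmetric. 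The point of carrying a whole tuple of terms, rather than appending a single term value, is precisely that the new element $c$ and the new variable $x_n$ both land at the end, so that the coordinates stay aligned and no reordering is required.

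The only genuine bookkeeping to keep straight is the arities and variable indices when passing from $\overline{t}$ to $\overline{t}'$ (and checking that the $t_i$ are read in the larger variable context without mentioning $x_n$); once that is set up the argument is a routine transfinite induction, and I would not expect any real obstacle.
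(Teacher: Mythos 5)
Your proof is correct and takes essentially the same route as the paper, whose entire proof is ``straightforward induction on $\alpha$ (simultaneously for all terms $t$)''; your strengthening of the induction hypothesis to tuples of terms, with the fresh variable placed last so that no permutation of coordinates is needed in the successor step, is just a careful way of organizing that same simultaneous induction. No issues.
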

\begin{proof}
    Straightforward induction on $\alpha$ (simultaneously for all terms $t$).
\end{proof}

\begin{lemma}\label{SRLemma4}
    Suppoose $M$ is an $\mathcal{L}$-structure and $\alpha$ is an infinite ordinal and $(M, \overline{a}) \equiv_\alpha (M, \overline{b})$. Suppose $a \in M$ is the unique realization of its quantifier-free type over $\overline{a}$. Then there is a unique $b \in M$ with $\mbox{qftp}(b, \overline{b}) = \mbox{qftp}(a,\overline{a})$, and for this $b$ we have $(M, \overline{a}a) \equiv_\alpha (M, \overline{b} b)$.
\end{lemma}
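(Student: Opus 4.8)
The plan is to separate the existence and uniqueness of $b$ from the back-and-forth transfer, and to handle the transfer by a two-layer induction that copes with the fact that uniqueness of a realization is ``invisible'' at finite back-and-forth levels.

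First I would establish existence and uniqueness of $b$ using only $\equiv_2$, which is available since $\alpha$ is infinite. For existence, applying the forth clause of $(M,\overline a)\equiv_1(M,\overline b)$ to $a$ produces some $b$ with $\mbox{qftp}(b,\overline b)=\mbox{qftp}(a,\overline a)$. For uniqueness, suppose $b_1\neq b_2$ both realize this type over $\overline b$; using $(M,\overline b)\equiv_2(M,\overline a)$ (by symmetry and monotonicity), match $b_1$ to some $a_1$ with $(M,\overline b b_1)\equiv_1(M,\overline a a_1)$, so that $\mbox{qftp}(a_1,\overline a)=\mbox{qftp}(a,\overline a)$ forces $a_1=a$; then the forth clause at $\equiv_1$ matches $b_2$ to some $a''$ with $\mbox{qftp}(a'',\overline a)=\mbox{qftp}(a,\overline a)$, so $a''=a$, while $b_2\neq b_1$ forces $a''\neq a$, a contradiction. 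I will also record two routine facts used repeatedly: $\equiv_\gamma$ is preserved under applying a common permutation of coordinates to both sides (a straightforward induction, as in Lemma~\ref{SRLemma3}), and if $a$ is the unique realization of its quantifier-free type over $\overline a$ then it remains the unique realization over any extension $\overline a c$.

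For the transfer, the auxiliary statement I would prove by induction on \emph{all} ordinals $\gamma$ is the ``lose one level'' assertion $(\dagger_\gamma)$: if $(M,\overline a)\equiv_{\gamma+1}(M,\overline b)$ and $a$ is the unique realization of its quantifier-free type over $\overline a$, then $(M,\overline a a)\equiv_\gamma(M,\overline b b)$, with $b$ as above. The case $\gamma=0$ is just the matching of quantifier-free types. For the forth clause at $\gamma=\eta+1$: given $c$, the forth clause of $(M,\overline a)\equiv_{\eta+2}(M,\overline b)$ yields $d$ with $(M,\overline a c)\equiv_{\eta+1}(M,\overline b d)$; since $a$ is still unique over $\overline a c$, applying $(\dagger_\eta)$ produces $b^\ast$ realizing $\mbox{qftp}(a,\overline a c)$ over $\overline b d$ with $(M,\overline a c a)\equiv_\eta(M,\overline b d b^\ast)$, and restricting this type to $\overline b$ together with uniqueness of $b$ over $\overline b$ forces $b^\ast=b$; permuting coordinates gives the desired $(M,\overline a a c)\equiv_\eta(M,\overline b b d)$. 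The back clause is symmetric, and the limit case follows immediately by monotonicity.

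Finally I would deduce the lemma from $(\dagger)$, by induction on the infinite ordinal $\alpha$. When $\alpha$ is an infinite limit (in particular $\alpha=\omega$), for every $\eta<\alpha$ we have $\eta+1\leq\alpha$, so $(\dagger_\eta)$ applied to $(M,\overline a)\equiv_{\eta+1}(M,\overline b)$ gives $(M,\overline a a)\equiv_\eta(M,\overline b b)$, hence $(M,\overline a a)\equiv_\alpha(M,\overline b b)$. When $\alpha=\beta+1$ is an infinite successor, $\beta$ is itself infinite, and I run the forth/back argument exactly as in the successor step of $(\dagger)$ but invoking the full-strength lemma at $\beta$ (available by the induction hypothesis) in place of $(\dagger_\beta)$; this is precisely where the lost level is recovered. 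The main obstacle is this last point: at finite levels uniqueness cannot be transferred at the same level, which forces one to prove the weaker statement $(\dagger)$ for all ordinals and then bootstrap back to same-level transfer for infinite $\alpha$, exploiting that an infinite successor has an infinite predecessor at which the induction hypothesis already supplies same-level transfer. Keeping the bookkeeping straight — which level, which tuple, the coordinate permutation, and the identification $b^\ast=b$ via restriction of types — is the only delicate part of the argument.
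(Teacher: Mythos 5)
Your proof is correct and follows essentially the same route as the paper: the key point in both is that at finite levels the uniqueness of the witness pins down $b$ and lets the lost level be absorbed at stage $\omega$, while at infinite successors the predecessor is again infinite so the induction hypothesis already supplies same-level transfer. The only difference is cosmetic --- your auxiliary statement $(\dagger_\gamma)$ for \emph{all} ordinals is slightly more than needed, since the paper only invokes the one-step forth clause at each finite level --- but this does no harm.
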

\begin{proof}
    A similar induction to the preceding lemma, but taking extra care at the base case $\alpha = \omega$.

    Suppose $(M, \overline{a}) \equiv_\omega (M, \overline{b})$ and $a \in M$ is the unique realization of its quantifier-free type over $\overline{a}$. Since $(M, \overline{a}) \equiv_2 (M, \overline{b})$, we have that there is a unique $b \in M$ with $\mbox{qftp}^M(\overline{a}a) = \mbox{qftp}^M(\overline{b} b)$. For each $n$, since $(M, \overline{a}) \equiv_{n+1} (M, \overline{b})$, there is some $b' \in M$ with $(M, \overline{a}a) \equiv_n (M, \overline{b}b')$; by uniqueness of $b$, $b= b'$. Thus $(M, \overline{a}a) \equiv_n (M, \overline{b} b)$ for all $n < \omega$, so $(M, \overline{a}a) \equiv_\omega (M, \overline{b})$ as desired.
\end{proof}

\begin{lemma} \label{SRLemma5}
    Suppose $M, N$ are structures in the same language and $\overline{a}, \overline{b}$ are tuples from $M$ and $\overline{c}, \overline{d}$ are tuples from $N$, with $\overline{a}$ and $\overline{b}$ of the same length and $\overline{c}, \overline{d}$ of the same length. Let $M' = M(\overline{a})$ and let $N' = N(\overline{c})$, i.e. add $\overline{a}$ and $\overline{c}$ as constants. Then for all ordinals $\alpha$, $(M, \overline{a} \overline{b}) \equiv_\alpha (N, \overline{c} \overline{d})$ if and only if $(M', \overline{b}) \equiv_\alpha (N', \overline{d})$.
\end{lemma}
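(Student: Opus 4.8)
The plan is to prove both implications simultaneously by induction on the ordinal $\alpha$, treating the named tuples $\overline{a}, \overline{c}$ as fixed parameters and letting the inductive variable range over the tuples $\overline{b}, \overline{d}$ that stay in the object language. Implicitly one assumes $|\overline{a}| = |\overline{c}|$, so that $M' = M(\overline{a})$ and $N' = N(\overline{c})$ are structures in a common language $\mathcal{L}^+ = \mathcal{L} \cup \{c_i : i < |\overline{a}|\}$, where $c_i$ is interpreted as $a_i$ in $M'$ and as $c_i$ (the $i$-th entry of $\overline{c}$) in $N'$. The conceptual content is just that naming constants is transparent to the back-and-forth game: quantifying only over the unnamed coordinates in $M', N'$ is the same game as quantifying over the corresponding coordinates of the concatenated tuples in $M, N$.

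First I would handle the base case $\alpha = 0$, which is the only step carrying genuine content. Each quantifier-free $\mathcal{L}^+$-formula $\theta(\overline{y})$ corresponds to a quantifier-free $\mathcal{L}$-formula $\theta^*(\overline{x}, \overline{y})$ obtained by replacing each constant symbol $c_i$ by the fresh variable $x_i$; then $M' \models \theta(\overline{b})$ if and only if $M \models \theta^*(\overline{a}, \overline{b})$, and likewise on the $N$ side. This dictionary is a bijection between the relevant sets of quantifier-free formulas, so it yields $\mbox{qftp}^{M'}(\overline{b}) = \mbox{qftp}^{N'}(\overline{d})$ if and only if $\mbox{qftp}^M(\overline{a}\overline{b}) = \mbox{qftp}^N(\overline{c}\overline{d})$, which is exactly $(M,\overline{a}\overline{b}) \equiv_0 (N,\overline{c}\overline{d})$ iff $(M',\overline{b}) \equiv_0 (N',\overline{d})$. (The matching of lengths $|\overline{b}| = |\overline{d}|$ is subsumed in quantifier-free type equality.)

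For the successor case I would use that $M'$ and $M$ share a universe, as do $N'$ and $N$. Assuming $(M,\overline{a}\overline{b}) \equiv_{\alpha+1} (N,\overline{c}\overline{d})$, given any $b \in M'$ the forth property of the original game produces $d \in N$ with $(M,\overline{a}\overline{b}b) \equiv_\alpha (N,\overline{c}\overline{d}d)$; applying the inductive hypothesis with $\overline{b}b, \overline{d}d$ in place of $\overline{b}, \overline{d}$ and the parameters $\overline{a}, \overline{c}$ unchanged gives $(M',\overline{b}b) \equiv_\alpha (N',\overline{d}d)$, which is the forth clause for $(M',\overline{b}) \equiv_{\alpha+1} (N',\overline{d})$. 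The back clause and the reverse implication are symmetric, and the limit and $\infty$ cases are immediate since each is a conjunction over smaller ordinals to which the inductive hypothesis applies termwise. I do not expect a real obstacle here: the only subtlety is the base-case dictionary, and the feature that makes the induction go through cleanly is precisely that the named parameters $\overline{a}, \overline{c}$ never change during the game, so every element played lands on the $\overline{b}/\overline{d}$ side where the inductive hypothesis directly applies. In effect the lemma is a bookkeeping identity.
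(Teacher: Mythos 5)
Your proof is correct and follows exactly the route the paper intends: the paper's own proof is simply ``straightforward induction on $\alpha$,'' and your write-up carries out that induction, with the quantifier-free dictionary at the base case and the shared-universe observation at successors being the right (and only) points of substance.
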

\begin{proof}
    Straightforward induction on $\alpha$.
\end{proof}

\begin{lemma}\label{SRLemma6}
    Suppose $M$ is a structure and $\overline{c} \in M^n$. Let $N$ be the expansion of $M$ where we add $\overline{c}$ as a sequence of constants. Then $\mbox{sr}(N) \leq \mbox{sr}(M) \leq \mbox{sr}(N) + \omega + n$.
\end{lemma}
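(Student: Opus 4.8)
The plan is to reduce everything to the translation furnished by Lemma \ref{SRLemma5}. Write $N = M(\overline{c})$ and $n = \lg(\overline{c})$. Applying Lemma \ref{SRLemma5} with both ambient structures equal to $M$ and with $\overline{c}$ adjoined on each side, we obtain, for all ordinals $\alpha$ and all same-length tuples $\overline{u}, \overline{v}$ from $M$,
\[
(M, \overline{c}\,\overline{u}) \equiv_\alpha (M, \overline{c}\,\overline{v}) \iff (N, \overline{u}) \equiv_\alpha (N, \overline{v}),
\]
and the same biconditional holds with $\equiv_\infty$ in place of $\equiv_\alpha$ (apply it at every $\alpha$). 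The first inequality, $\mbox{sr}(N) \le \mbox{sr}(M)$, falls out immediately: if $\beta = \mbox{sr}(M)$ and $(N, \overline{u}) \equiv_\beta (N, \overline{v})$, then $(M, \overline{c}\,\overline{u}) \equiv_\beta (M, \overline{c}\,\overline{v})$, so $(M, \overline{c}\,\overline{u}) \equiv_\infty (M, \overline{c}\,\overline{v})$ by the definition of $\mbox{sr}(M)$, and translating back gives $(N, \overline{u}) \equiv_\infty (N, \overline{v})$.

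For the second inequality, put $\gamma = \mbox{sr}(N)$ and suppose $(M, \overline{a}) \equiv_{\gamma + \omega + n} (M, \overline{b})$; I must show $(M, \overline{a}) \equiv_\infty (M, \overline{b})$. First I would spend the $n$ final rounds of the back-and-forth: playing the $n$ elements of $\overline{c}$ on the left one at a time produces a tuple $\overline{d}$ on the right with $(M, \overline{a}\,\overline{c}) \equiv_{\gamma + \omega} (M, \overline{b}\,\overline{d})$. Since $\equiv_\alpha$ is visibly invariant under permuting the coordinates of both tuples by the same permutation, I may rewrite this as $(M, \overline{c}\,\overline{a}) \equiv_{\gamma + \omega} (M, \overline{d}\,\overline{b})$, and then Lemma \ref{SRLemma5} (stripping $\overline{c}$ and $\overline{d}$ into constants) yields $(N, \overline{a}) \equiv_{\gamma + \omega} (N', \overline{b})$, where $N' = M(\overline{d})$ is $M$ expanded by $\overline{d}$ in the same constant symbols used for $N$.

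The main obstacle is precisely that the match $\overline{d}$ for $\overline{c}$ need not equal $\overline{c}$, so I am comparing $N$ with the different expansion $N'$ rather than with itself, and cannot directly invoke $\mbox{sr}(N) = \gamma$. This gap is bridged by Lemma \ref{SRLemma2}: by the first inequality (applied to $N$ and its constant expansion $(N, \overline{a})$) we have $\mbox{sr}((N, \overline{a})) =: \alpha' \le \gamma$, so from $\alpha' + \omega \le \gamma + \omega$ we get $(N, \overline{a}) \equiv_{\alpha' + \omega} (N', \overline{b})$, and Lemma \ref{SRLemma2} upgrades this to $(N, \overline{a}) \equiv_\infty (N', \overline{b})$ (its conclusion $\equiv_{\infty\omega}$ being our $\equiv_\infty$). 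Translating back through Lemma \ref{SRLemma5} and undoing the reordering gives $(M, \overline{a}\,\overline{c}) \equiv_\infty (M, \overline{b}\,\overline{d})$. Finally, forgetting the last $n$ coordinates preserves $\equiv_\infty$ — a routine induction on $\alpha$ shows $(M, \overline{u}\,\overline{w}) \equiv_\alpha (M, \overline{u}'\,\overline{w}')$ implies $(M, \overline{u}) \equiv_\alpha (M, \overline{u}')$ — so $(M, \overline{a}) \equiv_\infty (M, \overline{b})$, completing the bound $\mbox{sr}(M) \le \mbox{sr}(N) + \omega + n$.
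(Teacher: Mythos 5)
Your proof is correct and follows essentially the same route as the paper's: spend $n$ rounds to match $\overline{c}$ with some $\overline{d}$, pass to the constant expansions via Lemma~\ref{SRLemma5}, and upgrade $\equiv_{\gamma+\omega}$ to $\equiv_\infty$ via Lemma~\ref{SRLemma2}. You are in fact slightly more careful than the paper at the one delicate point — Lemma~\ref{SRLemma2} literally needs the Scott rank of the first structure $(N,\overline{a})$, not of $N$, and your observation that $\mbox{sr}((N,\overline{a}))\le\mbox{sr}(N)$ by the first inequality cleanly closes that gap.
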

\begin{proof}
    It is routine to check that $\mbox{sr}(N) \leq \mbox{sr}(M)$. For the reverse implication, write $\alpha = \mbox{sr}(N)$ and suppose $(M, \overline{a}) \equiv_{\alpha+\omega+n}(M, \overline{b})$. Then there is $\overline{d} \in M^n$ with $(M, \overline{a}, \overline{c}) \equiv_{\alpha + \omega} (M, \overline{b}, \overline{d})$. Let $M' = M(\overline{d})$, i.e. add $\overline{d}$ as constants. Using Lemma~\ref{SRLemma5} we have that $(N, \overline{a}) \equiv_{\alpha + \omega}(M', \overline{b})$. By Lemma~\ref{SRLemma2}, we have that $(N, \overline{a}) \equiv_{\infty \omega} (M', \overline{b})$. Hence $(M, \overline{a}) \equiv_{\infty \omega} (M, \overline{b})$ as desired.
\end{proof}

We mention the mechanism by which all of our examples of Borel isomorphism relation arise.

\begin{definition}
    Suppose $M$ is a countable structure. Then a base is a subset $B \subseteq M$ such that distinct elements of $M$ have distinct first-order types over $B$.
\end{definition}

\begin{lemma}\label{SRLemma8}
    Suppose $\Phi$ is a sentence of $\mathcal{L}_{\omega_1 \omega}$ such that every countable model of $\Phi$ admits a finite base. Then $\cong_\Phi$ is Borel, in fact $\mbox{sr}(\Phi) \leq \omega \cdot 3$.
\end{lemma}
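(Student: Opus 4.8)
The plan is to reduce, via Lemma~\ref{SRLemma6}, to the case where a finite base is named by constants, and then to show that once the base is named the model becomes rigid already at level $\omega$: any two $\equiv_\omega$-equivalent tuples are in fact equal.

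First I would fix a countable $M \models \Phi$ together with a finite base $\overline{c} \in M^n$, and assume $\mathcal{L}$ is relational (replacing any function symbols by their graphs, which by Lemma~\ref{SRLemma3} does not disturb the relations $\equiv_\alpha$). Let $N = M(\overline{c})$ be the expansion naming $\overline{c}$ as constants. The defining property of a base is exactly that distinct elements of $N$ have distinct complete first-order types over $\emptyset$; equivalently, each element of $N$ is the unique realization of its first-order type over $\emptyset$.

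The core step is to show $\mbox{sr}(N) \leq \omega$. Suppose $(N, \overline{a}) \equiv_\omega (N, \overline{b})$. Since for every $n$ the relation $\equiv_n$ refines agreement on all formulas of quantifier rank $\leq n$ — the easy, purely inductive direction of the Ehrenfeucht--Fra\"iss\'e characterization, which requires no finiteness of $\mathcal{L}$ — the hypothesis $\equiv_\omega$ implies that $\overline{a}$ and $\overline{b}$ realize the same complete first-order type over $\emptyset$. Projecting onto each coordinate, $a_i$ and $b_i$ realize the same first-order type over $\emptyset$, so by the base property $a_i = b_i$; hence $\overline{a} = \overline{b}$ and trivially $(N, \overline{a}) \equiv_\infty (N, \overline{b})$. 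Thus $\mbox{sr}(N) \leq \omega$. By Lemma~\ref{SRLemma6} we then get $\mbox{sr}(M) \leq \mbox{sr}(N) + \omega + n \leq \omega \cdot 2 + n < \omega \cdot 3$. As every countable model of $\Phi$ is of this form, Lemma~\ref{SRLemma1} yields that $\cong_\Phi$ is Borel. The identical computation bounds $\mbox{sr}(M) < \omega \cdot 3$ for \emph{any} model admitting a finite base; to obtain $\mbox{sr}(\Phi) \leq \omega \cdot 3$ over all models I would invoke the absoluteness argument following Lemma~\ref{SRLemma1}, using that ``admits a finite base'' is witnessed by first-order formulas and is therefore absolute, so that collapsing an arbitrary model to be countable transfers the bound.

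I expect the delicate points to be two. First, isolating that level $\omega$ already forces rigidity: this hinges on the exact match between the base property (phrased with first-order types) and $\equiv_\omega$ (elementary equivalence of tuples), and on using only the direction of Ehrenfeucht--Fra\"iss\'e that survives for infinite relational languages, so that $\equiv_\omega$ cannot be weakened further. Second, the absoluteness bookkeeping needed to push the bound from countable to arbitrary models, i.e. ensuring the finite-base hypothesis is available for a model after it has been collapsed to be countable; this is the main obstacle to the stronger ``$\mbox{sr}(\Phi) \leq \omega \cdot 3$'' clause, as opposed to mere Borelness.
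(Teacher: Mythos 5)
Your proposal is correct and follows essentially the same route as the paper: name the finite base as constants, observe that $\equiv_\omega$ forces equality of tuples in the expanded structure (so $\mbox{sr}(N)\leq\omega$), and then apply Lemma~\ref{SRLemma6} to get $\mbox{sr}(M)<\omega\cdot 3$. You in fact supply details the paper leaves implicit, namely the one-directional Ehrenfeucht--Fra\"iss\'e argument behind $\mbox{sr}(N)\leq\omega$ and the absoluteness needed to pass from countable to arbitrary models (the point being that ``some countable model of $\Phi$ lacks a finite base'' is analytic, hence absolute to forcing extensions).
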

\begin{proof}
    It suffices to show that if $M$ is a countable structure admitting a finite base $B$, then $\mbox{sr}(M) < \omega \cdot 3$. Let $N$ be the expansion of $M$ where we add $B$ as constants. Then $\mbox{sr}(M) \leq \mbox{sr}(N) + \omega + n < \mbox{sr}(N) + \omega \cdot 2$ by Lemma~\ref{SRLemma2} so it suffices to show $\mbox{sr}(N) \leq \omega$. But this follows from the fact that distinct elements of $N$ have distinct first-order types. 
\end{proof}

\subsection{$S_\infty$ Dividing}
\begin{definition}
    (From \cite{BorelComplexityP}) Suppose $M$ is a countable structure and $D_n \subseteq M$. Say that $D_n: n < \omega$ are absolutely indiscernible sets if the $D_n$'s are disjoint and for all $\sigma \in \mbox{Sym}(\omega)$, there is $g \in \mbox{Aut}(M)$ with each $g(D_n) = D_{\sigma(n)}$.

    (Due to Hjorth \cite{Hjorth}) Suppose $G, H$ are Polish groups. Say that $G$ divides $H$ if there is some closed $K \leq H$ and some continuous surjective homomorphism $\pi: K \to G$. We are particularly interested in the case when $G = S_\infty$ is the group of permutations of $\omega$.
\end{definition}

The following theorem is implicit in \cite{Allison}:
\begin{theorem}
Suppose $M$ is a countable structure. Then $S_\infty$ divides $\mbox{Aut}(M)$ if and only if $M$ admits absolutely indiscernible sets.
\end{theorem}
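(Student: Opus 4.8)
The plan is to prove the two implications separately, expecting the forward direction (indiscernible sets $\Rightarrow$ dividing) to be a direct construction and the reverse to be the substantive half. For the forward direction, suppose $D_n \subseteq M$ are absolutely indiscernible. Absolute indiscernibility forces the $D_n$ to be equinumerous, so, discarding the degenerate all-empty case, each $D_n$ is nonempty. Let $K = \{g \in \Aut(M) : \exists \sigma \in \mbox{Sym}(\omega)\ \forall n\ g(D_n) = D_{\sigma(n)}\}$ be the subgroup permuting the family $\{D_n\}$, and for $g \in K$ let $\pi(g)$ be the $\sigma$ it induces, which is unique by disjointness and nonemptiness. Then $\pi \colon K \to S_\infty$ is a homomorphism, surjective precisely by the definition of absolute indiscernibility. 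It is continuous because, fixing a representative $d_n \in D_n$, one has $\pi^{-1}(\{\sigma : \sigma(n)=m\}) = \{g \in K : g(d_n) \in D_m\}$, which is open. Finally $K$ is closed: if $g_k \to g$ in $\Aut(M)$ with $g_k \in K$, then since $M$ is discrete each $g_k(d)$ is eventually $g(d)$, so the blocks $\pi(g_k)(n)$ stabilize to a well-defined value $\sigma(n)$ with $g(D_n) \subseteq D_{\sigma(n)}$; running the same argument for $g_k^{-1} \to g^{-1}$ produces $\tau$ with $\tau\sigma = \sigma\tau = \mbox{id}$ and upgrades each inclusion to an equality, so $g \in K$. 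Hence $S_\infty$ divides $\Aut(M)$.

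For the reverse direction, suppose $\pi \colon K \to S_\infty$ is a continuous surjective homomorphism with $K \leq \Aut(M)$ closed. Since $K$ and $S_\infty$ are Polish, the open mapping theorem for Polish groups makes $\pi$ open. Because $K \leq \mbox{Sym}(M)$, the pointwise stabilizers $K_{(F)}$ of finite $F \subseteq M$ form a neighborhood basis of the identity; as $\pi$ is continuous and open, the images $\pi(K_{(F)})$ form a neighborhood basis of the identity in $S_\infty$, and each is a proper open, hence infinite-index, subgroup once $F$ is large enough (using that $S_\infty$ has no proper finite-index subgroups). The goal is to convert the pulled-back $S_\infty$-action into an honest family of pairwise disjoint subsets of $M$: concretely, disjoint $D_n \subseteq M$ such that for every $\sigma \in S_\infty$ some $g \in \pi^{-1}(\sigma)$ satisfies $g(D_n) = D_{\sigma(n)}$, which together with surjectivity of $\pi$ yields absolute indiscernibility.

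The naive attempt is to pull back a point stabilizer, setting $U = \pi^{-1}(\mbox{Stab}_{S_\infty}(0))$ (an open subgroup of index $\aleph_0$ with $K/U \cong \omega$ as $K$-sets via $\pi$) and realizing the fibers of an orbit map $K\cdot a \to K/U$ as the $D_n$. Taking $a$ a tuple enumerating an $F$ with $K_{(F)} \leq U$, this does produce an absolutely indiscernible family of \emph{tuples}, i.e. subsets of some $M^k$; but the sets it induces on $M$ itself need not be pairwise disjoint, and a single-orbit family can fail disjointness irreparably. For instance, when $\Aut(M) = S_\infty$ acts naturally on $M = [\omega]^2$, no nonempty $\mbox{Stab}_{S_\infty}(0)$-invariant subset has pairwise disjoint $S_\infty$-translates, yet a perfect matching $D_n = \{\{2n,2n+1\}\}$ does give absolutely indiscernible singletons. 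Thus one cannot hope for a family satisfying $g(D_n) = D_{\pi(g)(n)}$ for \emph{all} $g \in K$; the $D_n$ need only be permuted by a subgroup of $\Aut(M)$ that still surjects onto $S_\infty$.

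I therefore expect the main obstacle to be exactly this descent: manufacturing pairwise disjoint genuine subsets of $M$, i.e. locating an \emph{independent} or \emph{generic} configuration inside the pulled-back action rather than a full orbit. I would attempt this by a back-and-forth/Baire-category construction, building the $D_n$ as an increasing union of finite approximations and, at each stage, using surjectivity of $\pi$ together with its openness (so that prescribed finite behavior of a permutation of indices is realized by an honest element of $K$) to extend partial symmetries while reserving fresh elements of $M$ to keep the blocks disjoint. The crux, which is where I expect to invoke Allison's analysis, is guaranteeing that the construction never forces the blocks to collide; this is precisely where the fact that $\pi(K_{(F)}) \to 1$ is used, ruling out the degenerate situation in which every configuration is pinned down by $\pi$ (which would force $\pi$ to remain surjective on arbitrarily small stabilizers). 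Passing to the limit and using that $K$ is closed would then yield the required absolutely indiscernible family.
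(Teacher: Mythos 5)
Your forward direction (absolutely indiscernible sets $\Rightarrow$ $S_\infty$ divides $\Aut(M)$) is correct and complete: the group $K$ of automorphisms permuting the family, the induced map $\pi$ to $S_\infty$, and your arguments for continuity, closedness of $K$, and surjectivity all check out (modulo the observation, which you make, that the $D_n$ must be taken nonempty for $\pi$ to be well defined).

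The reverse direction, however, is the substantive half of the theorem, and your proposal does not prove it. You correctly diagnose the obstruction --- pulling back a point stabilizer of $S_\infty$ through $\pi$ produces an absolutely indiscernible family of \emph{tuples} (equivalently, of subsets of some $M^k$), and your $[\omega]^2$ example shows these need not descend to pairwise disjoint subsets of $M$ --- but the passage from this diagnosis to an actual construction is exactly the content of the theorem, and you explicitly defer it to ``Allison's analysis.'' The paper itself records the statement as implicit in \cite{Allison} rather than proving it, but the route visible in the surrounding text is the one you are missing: from $S_\infty$ dividing $\Aut(M)$ one first extracts a \emph{nontrivial disjointifying invariant closure operation} $\mbox{cl}$ on $M$, and the disjointifying property (for all finite $A,B\supseteq C$ there is $A'\cong_C A$ independent from $B$ over $C$ in the sense of $\mbox{cl}$) is precisely the combinatorial device that lets a back-and-forth construction move each new candidate block off of the previously committed ones, so the blocks never collide. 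Your appeal to the fact that $\pi(K_{(F)})$ shrinks to the identity cannot substitute for this: that shrinking is automatic from continuity of $\pi$ and carries no disjointness information about orbits in $M$. So the forward implication stands, but the reverse implication remains a plan with its crucial step unexecuted.
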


% The following theorem was proved in \cite{expansions}:
% \begin{theorem}
%     Suppose $\Phi \in \mathcal{L}_{\omega_1 \omega}$. Then the following are equivalent:

%     \begin{enumerate}
%         \item $\Phi$ has some Borel complete expansion;
%         \item $\Phi$ has an expansion of potential cardinality at least $\beth_{\omega_1}$;
%         \item $S_\infty$ divides $\mbox{Aut}(M)$ for some countable $M \models \Phi$.
%     \end{enumerate}
% \end{theorem}

We pull some definitions from \cite{Allison}.
\begin{definition}
Suppose $M$ is a countable structure. Then $\mbox{cl}: \mathcal{P}(M) \to \mathcal{P}(M)$ is an invariant closure operation if the following hold for all $A, B \subseteq M$:

\begin{itemize}
    \item $A \subseteq \mbox{cl}(A)$;
    \item $\mbox{cl}(A) \subseteq \mbox{cl}(B)$ for all $A \subseteq B$;
    \item $\mbox{cl}^2 = \mbox{cl}$;
    \item $\mbox{cl}$ is invariant under automorphisms of $M$.
\end{itemize}

Given an invariant closure operation $\mbox{cl}$, we have an associated independence relation $\forkindep$ defined by $A \forkindep_C B$ if $A \cap \mbox{cl}(BC) \subseteq \mbox{cl}(C)$ and $B \cap \mbox{cl}(AC) \subseteq \mbox{cl}(C)$. $\mbox{cl}$ is disjointifying if for all finite sets $A, B \supseteq C$, there is some $A' \cong_C A$ with $A' \forkindep_C B$. $\mbox{cl}$ is trivial if $\mbox{cl}(\emptyset) = M$.
\end{definition}

Allison proves the following \cite{Allison}:
\begin{theorem}
 $S_\infty$ divides $\mbox{Aut}(M)$ if and only if there is a nontrivial disjointifying closure operation. 
\end{theorem}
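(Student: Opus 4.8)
The plan is to route through the preceding theorem, which already equates $S_\infty$ dividing $\mbox{Aut}(M)$ with $M$ admitting absolutely indiscernible sets. So it suffices to prove that $M$ admits absolutely indiscernible sets if and only if there is a nontrivial disjointifying closure operation, and I would prove the two directions separately.

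For the direction from absolutely indiscernible sets to a closure operation, suppose $D_n : n < \omega$ are absolutely indiscernible, and let $I \subseteq M$ be the union of all absolutely indiscernible families, i.e.\ $m \in I$ iff $m$ lies in some family of absolutely indiscernible sets. Since any $g \in \mbox{Aut}(M)$ carries an absolutely indiscernible family $(D_n)$ to the absolutely indiscernible family $(g(D_n))$, the set $I$ is $\mbox{Aut}(M)$-invariant. I would then define $\mbox{cl}(A) = (M \setminus I) \cup (A \cap I)$. The closure axioms (extensivity, monotonicity, idempotence) are immediate from this formula, invariance follows from $g(I) = I$, and nontriviality holds because $\mbox{cl}(\emptyset) = M \setminus I \neq M$ (as the $D_n$ are nonempty, so $I \neq \emptyset$). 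The only substantive point is that $\mbox{cl}$ is disjointifying: unwinding the definitions, $A' \forkindep_C B$ reduces to $(A' \cap B) \cap I \subseteq C$, so given finite $A, B \supseteq C$ I must produce $A' \cong_C A$ whose $I$-elements outside $C$ avoid $B$. This is exactly what absolute indiscernibility buys: each element of $(A \setminus C) \cap I$ lives in a family with infinitely many members, so one can permute those families to push it off the finite set $B$; assembling these into a single automorphism fixing $C$ is the technical content.

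For the converse, suppose $\mbox{cl}$ is a nontrivial disjointifying closure operation. Nontriviality gives some $a_0 \in M \setminus \mbox{cl}(\emptyset)$, and repeatedly applying the disjointifying property (with $C = \emptyset$) yields a sequence $a_n : n < \omega$ of elements, all in one $\mbox{Aut}(M)$-orbit, that are mutually $\forkindep$-independent over $\emptyset$. Since $\forkindep$ is symmetric in its two arguments, I expect this sequence to be totally indiscernible after a Ramsey extraction: coloring increasing $k$-tuples by their $\mbox{Aut}(M)$-orbit (there are only countably many) and diagonalizing over $k$ produces a subsequence on which the orbit of a tuple depends only on its length and not its ordering. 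Setting $D_n = \{a_n\}$, I would then show this family is absolutely indiscernible: every finite permutation is realized by an automorphism outright (same-length tuples share an orbit), and an arbitrary $\sigma \in \mbox{Sym}(\omega)$ is realized by a back-and-forth limit, where at each stage the disjointifying property supplies the amalgamation needed to extend a finite, orbit-preserving partial map while simultaneously honoring the constraint $a_n \mapsto a_{\sigma(n)}$ and exhausting $M$.

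I expect the converse direction to be the main obstacle, and specifically the step that upgrades the raw independent sequence to something symmetric enough to realize \emph{every} permutation by an automorphism. The disjointifying hypothesis only asserts the \emph{existence} of independent copies, not their uniqueness (there is no stationarity), so without further work $(a_0, a_1)$ and $(a_1, a_0)$ could lie in different orbits and no automorphism would swap them; the Ramsey extraction together with the symmetry of $\forkindep$ is what I would use to remove this obstruction. The remaining delicate point, shared with the forward direction, is the bookkeeping in the two back-and-forth arguments—maintaining extendibility of the partial maps while respecting both the permutation constraints and surjectivity onto $M$, and in the forward direction arranging a single automorphism that fixes $C$ pointwise while moving the $I$-elements of $A$ off $B$ even when they share a family with elements of $C$.
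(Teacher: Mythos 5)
Note first that the paper does not prove this statement at all; it is quoted as a theorem of Allison, so your argument has to stand on its own. Both directions of your plan, as written, contain not just technical debts but steps that actually fail, and in each case the point you flag as ``the technical content'' is the point where the construction breaks. For the forward direction, your closure operation $\mbox{cl}(A)=(M\setminus I)\cup(A\cap I)$ need not be disjointifying. Take $M=\omega\times\{0,1\}$ with an equivalence relation $E$ whose classes are $D_n=\{n\}\times\{0,1\}$ and a unary predicate $U=\omega\times\{0\}$; then $\mbox{Aut}(M)$ is just $\mbox{Sym}(\omega)$ acting on the first coordinate, the $D_n$ are absolutely indiscernible, and every element lies in this family, so $I=M$ and your $\mbox{cl}$ is the identity. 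Now take $C=\{(0,0)\}$ and $A=B=D_0$: every automorphism fixing $(0,0)$ fixes $D_0$ pointwise, so the only candidate for $A'$ is $A$ itself, and $A\cap B\cap I=D_0\not\subseteq C$. The closure operation that does work here is $\mbox{cl}(A)=[A]_E$, i.e.\ one must close under the indiscernible blocks rather than merely remember membership in $I$; your worry about elements of $A$ ``sharing a family with elements of $C$'' is therefore fatal to your particular choice of $\mbox{cl}$, not a bookkeeping issue.

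The converse direction is worse off, because taking $D_n=\{a_n\}$ to be singletons cannot succeed in general. Consider $(\mathbb{Q},<)$ with $\mbox{cl}(A)=A$: this is a nontrivial disjointifying closure operation (density supplies the independent copies over any finite $C$), and a mutually independent sequence in one orbit is just a sequence of distinct rationals. No order-automorphism swaps two rationals, so $(a_0,a_1)$ and $(a_1,a_0)$ lie in different orbits for \emph{every} choice of distinct $a_0,a_1$, and no Ramsey extraction can repair this: Ramsey gives you order-indiscernibility of orbits, and the symmetry of $\forkindep$ is a symmetry of the independence relation, not of orbits of ordered tuples, so nothing upgrades order-indiscernibility to total indiscernibility here (there is no stability hypothesis in play). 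In this example the absolutely indiscernible sets exist but must be infinite --- e.g.\ the pieces of a partition of $\mathbb{Q}$ into countably many dense sets --- which shows that the $D_n$ have to be constructed as (typically infinite) blocks by a global back-and-forth that builds the sets and the permuting automorphisms simultaneously, rather than extracted as points of an independent sequence. So the missing idea in both directions is the same: the correct objects are the blocks/closures, not the individual independent elements, and neither your candidate $\mbox{cl}$ nor your candidate $D_n$ survives the two counterexamples above.
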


A trivial modification of his proof gives: 

\begin{corollary}
Suppose $M$ is a countable structure and $\mbox{cl}$ is a nontrivial invariant disjointifying closure relation on $M$. Suppose $a \in M$ and $a \not \in \mbox{cl}(\emptyset)$. Then we can find absolutely indiscernible sets $D_n: n< \omega$ such that for all $b \in D_n$, $(M, b) \cong (M, a)$. 
\end{corollary}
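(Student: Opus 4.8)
The plan is to follow the proof of the preceding theorem (Allison's construction of absolutely indiscernible sets from a disjointifying closure operation), but to seed the construction with the prescribed element $a$ so that every element produced lies in the automorphism orbit of $a$. Concretely, I would build an independent sequence $(a_n : n < \omega)$ of copies of $a$ and take the singletons $D_n = \{a_n\}$; then the unique $b \in D_n$ is $a_n$, with $(M, a_n) \cong (M, a)$, as required. (Equivalently, one runs Allison's construction verbatim, observing that once it is seeded by $a$ every element entering any $D_n$ is a copy of $a$.)

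First I would construct the sequence. Set $a_0 = a$, which is legitimate since $a \notin \mbox{cl}(\emptyset)$. Given $a_0, \dots, a_{n-1}$, apply the disjointifying property with $A = \{a\}$, $B = \{a_0, \dots, a_{n-1}\}$, and $C = \emptyset$ to obtain $a_n$ with $\{a_n\} \cong_\emptyset \{a\}$ (so $(M, a_n) \cong (M, a)$) and $\{a_n\} \forkindep_\emptyset \{a_0, \dots, a_{n-1}\}$. Because $a \notin \mbox{cl}(\emptyset)$ and $a_n$ is a copy of $a$, we have $a_n \notin \mbox{cl}(\emptyset)$, so the independence $\{a_n\} \forkindep_\emptyset \{a_0,\dots,a_{n-1}\}$ forces $a_n \notin \mbox{cl}(a_0 \cdots a_{n-1})$; in particular the $a_n$ are pairwise distinct and the $D_n$ are disjoint.

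It then remains to establish absolute indiscernibility: for each $\sigma \in \mbox{Sym}(\omega)$ there is $g \in \mbox{Aut}(M)$ with $g(a_n) = a_{\sigma(n)}$ for all $n$. This is exactly the homogeneity statement underlying Allison's theorem, applied to the two independent sequences $(a_n)_n$ and $(a_{\sigma(n)})_n$, and I would prove it by a back-and-forth between $M$ and itself. At a typical forth step one has a finite partial automorphism $p$ honoring the already-committed pairs $a_n \mapsto a_{\sigma(n)}$, together with a new element $c \in M$ to be adjoined to $\mbox{dom}(p)$; writing $C$ for the current domain, one invokes the disjointifying property with $A = \{c\}$ over $C$ to produce on the image side a realization of the matching type over $p(C)$ that is suitably independent, and symmetrically for the back step, so that $\bigcup p$ is a total automorphism of $M$ carrying $a_n$ to $a_{\sigma(n)}$.

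The main obstacle is this back-and-forth homogeneity argument, which is the substantive part of Allison's proof; the content genuinely specific to the corollary is only to verify that constraining the seed to the single orbit $\tp(a)$ does not interfere with it, which it does not, since disjointifying with $A = \{a\}$ over any finite $C$ always returns copies of $a$. A secondary point to check is that the family $\{a_n\}$ is symmetric enough that the reindexed sequence $(a_{\sigma(n)})_n$ again feeds the back-and-forth: this holds because $\forkindep$ is symmetric, and one arranges (either directly in the inductive construction or by a short argument from disjointifying) that $\{a_n\}$ is not merely independent over predecessors but a genuinely independent set, so that every reordering is admissible.
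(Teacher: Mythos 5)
The paper offers no written proof here: it states only that the corollary follows by ``a trivial modification of his [Allison's] proof,'' and the intended modification is exactly the one you identify in your parenthetical --- run Allison's construction seeded with $a$, and note that the disjointifying property with $A=\{a\}$ over any finite $C$ only ever returns automorphic copies of $a$, so every element placed into any $D_n$ lies in the orbit of $a$. In that sense your core idea coincides with the paper's.

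However, the way you then flesh out Allison's construction has a genuine gap. You build an independent sequence $(a_n)$ of copies of $a$ and assert that absolute indiscernibility of the singletons $D_n=\{a_n\}$ follows by a back-and-forth, justified by symmetry of $\forkindep$ and the sequence being ``genuinely independent.'' Independence with respect to an invariant closure operation is a statement about closure containments, not about types: two sets that are each independent from $C$ and pairwise automorphic over $C$ need not have the same (infinitary) type as tuples, so there is no reason an arbitrary permutation of $(a_n)$ should extend to an automorphism of $M$. Moreover, the forth step as you describe it misapplies the disjointifying property: given a partial automorphism $p$ with domain $C$ and range $p(C)$ and a new element $c$, disjointifying with $A=\{c\}$ over $C$ produces a copy of $c$ over $C$ on the \emph{domain} side that is independent from some prescribed set; it does not produce a partner for $c$ over $p(C)$ on the range side, which is what the back-and-forth needs. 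The substantive content of Allison's theorem is precisely that the sets $D_n$ must be constructed \emph{together with} the countably many witnessing automorphisms, by an interleaved fusion argument in which the disjointifying property is used to meet one finite requirement at a time; it is not the case that an independent sequence is automatically absolutely indiscernible. Your identification of what is specific to the corollary is right, but the reduction to ``independent sequence plus back-and-forth'' does not reproduce Allison's argument and would fail as written.
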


We will need the following minor observation.

\begin{lemma}\label{clLemma}
    Suppose $\overline{a} \in M^{<\omega}$ and $\mbox{cl}$ is a nontrivial disjointifying closure relation on $M$. Suppose $\mbox{cl}(\overline{a}) \not= M$. Let $N$ be $M(\overline{a})$ (i.e. add constants for $\overline{a}$) and define $\mbox{cl}'(A) := \mbox{cl}(A \overline{a})$ for all $A \subseteq M$. Then $\mbox{cl}'$ is a nontrivial disjointifying closure operation on $N$.
\end{lemma}
\begin{proof}
    Straightforward.
\end{proof}

\section{The Examples}

We recall the examples from \cite{BorelComplexityP}.

Say that the poset $P$ is down-finite if for all $p \in P$, $P_{\leq p}$ is finite. Let $\mathbb{P}$ be the set of all triples $(P, \leq, \delta)$ where $(P, \leq)$ is a down-finite countable poset and $\delta: P \to \omega \backslash \{0, 1\}$. Usually $\leq$ and $\delta$ will be clear from context and we will just write $P$.

Suppose $(P, \leq, \delta) \in \mathbb{P}$. Let $\mathcal{L}_P$ be the language with binary relations $E_p$ for each $p \in P$. We define an $\mathcal{L}_P$-structure $\mathfrak{M}_P$, an $\mathcal{L}_P$-theory $T_P$, and an infinitary sentence $\Phi_P$ of $(\mathcal{L}_{P})_{\omega_1 \omega}$. Namely, let $\mathfrak{M}_P$ be the structure with universe $\prod_{p \in P} \delta(p)$ where we interpret $\mathfrak{M}_P \models E_p(f, g)$ if and only if $f(q) = g(q)$ for all $q \leq p$. Let $T_P$ be the theory axiomatized by the following:

\begin{itemize}
 \item  Each $E_q$ is an equivalence relation;
 \item  If $q'\le q$, then $E_q$ refines $E_{q'}$ and, moreover, letting $E_{<q}(x,y):=\bigwedge_{q'<q} E_{q'}(x,y)$,
 then $E_q$ partitions each $E_{<q}$-class into precisely $\delta(q)$ classes;
 \item If $Q\subseteq P$ is downward closed and finite,  then for every sequence $(a_q:q\in Q)$ satisfying
 $E_{q'}(a_{q'},a_q)$ for all $q'\le q$, there is $a^*$ such that $E_q(a^*,a_q)$ for every $q\in Q$.
 \end{itemize}

 Let $\Phi_P$ assert $T_P$ along with the infinitary sentence asserting that for all $a, b$, if $E_p(a,b)$ for all $p \in P$ then $a = b$.

 When $P$ is finite $T_P$ is incomplete, but when $P$ is infinite then $T_P$ is eliminates quantifiers, and generates the complete first-order theory of $\mathfrak{M}_P$; and $\mathfrak{M}_P \models \Phi_P$. Further, every model of $\Phi_P$ embeds as a dense, equivalently, elementary submodel of $\mathfrak{M}_P$. When $P$ is infinite then $T_P$ is weakly minimal trivial (when $P$ is finite, then every completion is weakly minimal trivial). 

 We will want the following characterization of the countable models of $T_P$. For $\Phi$ any sentence of $\mathcal{L}_{\omega_1 \omega}$ we let $\mathcal{C}(\Phi)$ denote the infinitary sentence describing $\omega$-colorings of models of $\Phi$ (formally, we add unary predicates for the colors). 

The following is Lemma 4.3 of \cite{BorelComplexityP}:
 \begin{lemma}
     For all $(P, \leq, \delta) \in \mathbb{P}$, $T_P$ is Borel equivalent to $\mathcal{C}(\Phi_P)$.
 \end{lemma}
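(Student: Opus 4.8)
The plan is to exhibit explicit Borel reductions in both directions, built from the observation that a countable model of $T_P$ is nothing but a countable model of $\Phi_P$ in which each point has been replaced by a set of indiscernible copies. First I would introduce the invariant equivalence relation $E_\infty := \bigcap_{p \in P} E_p$ on any $M \models T_P$. Since $E_\infty$ refines every $E_p$, each $E_p$, and hence the whole $\mathcal{L}_P$-structure, is constant on $E_\infty$-classes, so the quotient $\overline{M} := M / E_\infty$ carries a well-defined $\mathcal{L}_P$-structure. I would then check that $\overline{M} \models \Phi_P$: collapsing $E_\infty$-classes does not change the number of $E_q$-classes inside any $E_{<q}$-class, so the equivalence-relation and splitting axioms of $T_P$ survive; the richness axiom survives because it only asserts existence of realizations, which are unaffected by collapsing indiscernible copies; and the separation clause of $\Phi_P$ holds by the very definition of $E_\infty$.

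Next I would record the structural dictionary. Assign to $M \models T_P$ the pair $(\overline{M}, s_M)$, where $s_M : \overline{M} \to \{1, 2, \dots\} \cup \{\aleph_0\}$ sends each $E_\infty$-class to its cardinality. The key claim is that this is a complete invariant. Any $\mathcal{L}_P$-isomorphism $f : M \to M'$ preserves $E_\infty$ (it is invariantly defined from the $E_p$), hence descends to an isomorphism $\overline{f} : \overline{M} \to \overline{M}'$ of $\Phi_P$-models with $s_{M'} \circ \overline{f} = s_M$; conversely, given such an $\overline{f}$, I would lift it by choosing, for each class $C$, a bijection from $C$ onto $\overline{f}(C)$ — possible exactly because $s_M(C) = s_{M'}(\overline{f}(C))$ — and amalgamating these into an isomorphism $M \to M'$, which preserves each $E_p$ because membership in $E_p$ depends only on $E_\infty$-classes.

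With the dictionary in hand, the two reductions are immediate once we fix a bijection $\beta : \{1, 2, \dots\} \cup \{\aleph_0\} \to \omega$. For $T_P \leq_B \mathcal{C}(\Phi_P)$, send $M$ to the colored structure $(\overline{M}, \beta \circ s_M)$: the quotient is a standard Borel construction (e.g.\ taking least representatives of $E_\infty$-classes), and computing the cardinality of an $E_\infty$-class is Borel, so this is a Borel reduction by the dictionary. For $\mathcal{C}(\Phi_P) \leq_B T_P$, send a colored model $(\overline{M}, c)$ to the model obtained by replacing each $x \in \overline{M}$ with $\beta^{-1}(c(x))$-many copies, all made $E_\infty$-equivalent and inheriting each $E_p$ from $\overline{M}$; since every color corresponds to a size $\geq 1$ the map $\overline{M} \to M$ is onto, and one checks directly that $M \models T_P$ (again the richness axiom transfers because it only requires existence of realizations). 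As usual, replacing the output by an isomorphic copy with universe $\omega$ and verifying Borelness of the map is routine, as remarked in the preliminaries.

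The main obstacle is really the bookkeeping of the structural dictionary: verifying that the quotient lands in $\mbox{Mod}(\Phi_P)$ and the blow-up lands in $\mbox{Mod}(T_P)$ (the richness axiom being the one clause requiring a moment's thought), and that $(\overline{M}, s_M)$ is a genuinely complete invariant so that both constructions respect isomorphism. Everything else — fixing the bijection $\beta$, the Borel measurability of quotienting and counting, and the passage to universe $\omega$ — is standard.
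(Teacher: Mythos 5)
Your proof is correct and is essentially the construction the paper intends: the paper cites this as Lemma 4.3 of \cite{BorelComplexityP} without reproving it, but it uses exactly your dictionary elsewhere (in the Scott-rank lemma for tame expansions it forms $M/E_P$ and colors each class by $0$ if infinite and by $|[a]_{E_P}|+1$ otherwise, which is your $(\overline{M},\beta\circ s_M)$ with a particular choice of $\beta$). The quotient-by-$E_\infty$ invariant, the class-by-class lifting argument, and the blow-up in the reverse direction are all as expected, so there is nothing to add.
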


In practice it is easier to work with $\mathcal{C}(\Phi_P)$.

We now turn to tame expansions.

\begin{definition}  \label{tamedef} { Suppose $(P,\le,\delta)\in\P$ and $M\models T_P$.  
Fix any $p\in P$.  A subset $S\subseteq M^n$ is {\em $E_p$-invariant} if, for all $(a_0,\dots,a_{n-1}),(b_0,\dots,b_{n-1})\in M^n$, if $M\models \bigwedge_{i<n}E_p(a_i,b_i)$, then $[\overline{a}\in S\leftrightarrow \overline{b}\in S]$.  
A subset $S\subseteq M^n$ is {\em tame} if $S$ is $E_p$-invariant for some $p\in P$.

Let $\mathcal{L}^+=\mathcal{L}_P\cup\{S_i(\overline{x}_i):i\in I\}$ where each $S_i$ is $n_i$-ary.  An expansion $M^+$ of $M \models T_P$  is a {\em tame expansion} if
the interpretation of every $S_i(\overline{x}_i)$ is a tame subset of $M^{n_i}$. A tame expansion of $T_P$ is the complete theory of a tame expansion $M^+$ of some $M \models T_P$.
}
\end{definition}

Every tame expansion of $T_P$ is weakly minimal trivial. Moreover, we have the following Fact 8.2 of \cite{BorelComplexityP}:

\begin{lemma}\label{tameFacts}
Suppose $(P,\le,\delta) \in \mathbb{P}$.
\begin{enumerate}
    \item For all $M \preceq N \models T_P$ and for all tame expansions $M^+$ of $M$, there is a unique tame expansion $N^+$ of $N$ with $M^+ \subseteq N^+$.
    \item If $M \preceq N \models T_P$ and $M^+ \subseteq N^+$ are tame expansions then $M^+ \preceq N^+$.
    \item Suppose $T$ is a tame expansion of $T_P$, with $P$ infinite. Then $T$ is Borel equivalent to $\mathcal{C}(\Phi_P \land T)$. (If $P$ is finite then $T$ has exactly one countable model up to isomorphism.)
\end{enumerate}
\end{lemma}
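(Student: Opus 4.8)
The plan is to establish the one structural fact that drives everything and then treat the three parts in turn. Write $E_\infty$ for the equivalence relation $\bigcap_{p\in P}E_p$, and recall that each $E_p$ has exactly $\prod_{q\le p}\delta(q)$ classes, a finite number since $P$ is down-finite. If $M\preceq N\models T_P$ then $M\subseteq N$ is an $\mathcal{L}_P$-substructure, so for $a,b\in M$ we have $E_p(a,b)$ in $M$ iff in $N$; hence the $E_p$-classes of $M$ inject into those of $N$, and since both are finite and (by $M\preceq N$) equinumerous, the injection is a bijection. Thus \emph{every $E_p$-class of $N$ meets $M$}; I will use this repeatedly.

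For part (1), fix $M^+$ with each $S_i^{M^+}\subseteq M^{n_i}$ being $E_{p_i}$-invariant. I would define the extension by invariance: declare $\overline a\in S_i^{N^+}$, for $\overline a\in N^{n_i}$, iff there is $\overline b\in M^{n_i}$ with $E_{p_i}(a_j,b_j)$ for all $j$ and $\overline b\in S_i^{M^+}$. Existence of such $\overline b$ follows from the fact that every $E_{p_i}$-class of $N$ meets $M$, well-definedness from $E_{p_i}$-invariance of $S_i^{M^+}$, and $E_{p_i}$-invariance of $S_i^{N^+}$ is immediate; so $N^+$ is a tame expansion with $M^+\subseteq N^+$. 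For uniqueness, suppose $\widetilde S$ is any tame extension of $S_i^{M^+}$, say $E_{p'}$-invariant. Then $S_i^{M^+}$ is invariant under both $E_{p_i}$ and $E_{p'}$, hence under the equivalence relation they jointly generate; chaining through representatives in $M$ (available since every class meets $M$) shows the $E_{p'}$-extension and the $E_{p_i}$-extension of $S_i^{M^+}$ coincide. As $\widetilde S$ is $E_{p'}$-invariant and determined by its restriction $S_i^{M^+}$, it is exactly the $E_{p'}$-extension, hence equals $S_i^{N^+}$.

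For part (2), the key is that a tame predicate is \emph{uniformly $\mathcal{L}_P$-definable with parameters}. Fix representatives $m_1,\dots,m_{K_i}\in M$ of the finitely many $E_{p_i}$-classes; since $S_i^{M^+}$ is a union of ``boxes'', there is a quantifier-free $\mathcal{L}_P$-formula $\theta_i(\overline x,\overline m)$ of the form $\bigvee\bigwedge_j E_{p_i}(x_j,m_{k_j})$ defining it. Because $M\preceq N$, the same $\overline m$ lists all $E_{p_i}$-classes of $N$, and by the uniqueness in (1), $S_i^{N^+}$ is the $E_{p_i}$-invariant extension, so $\theta_i(\overline x,\overline m)$ defines $S_i^{N^+}$ as well. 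Hence any $\mathcal{L}^+$-formula $\phi$ can be rewritten, by substituting the finitely many relevant $\theta_i$, as an $\mathcal{L}_P$-formula $\phi^*$ with parameters from $M$; for $\overline a\in M$ we then get $M^+\models\phi(\overline a)$ iff $M\models\phi^*(\overline a,\overline m)$ iff $N\models\phi^*(\overline a,\overline m)$ iff $N^+\models\phi(\overline a)$, which is precisely $M^+\preceq N^+$.

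For part (3) I would mimic the special case $T=T_P$ (that $T_P$ is Borel equivalent to $\mathcal{C}(\Phi_P)$), the new input being that tame predicates descend to, and pull back from, the quotient by $E_\infty$, since $E_\infty$ refines each $E_{p_i}$ and so elements of a common $E_\infty$-class are indistinguishable by every $S_i$. Given a countable $M^+\models T$, form $M_0^+:=M^+/E_\infty$ with induced tame predicates and color each point by the size of its $E_\infty$-fiber (a value in $\{1,2,\dots\}\cup\{\aleph_0\}$, codeable by an $\omega$-color). The map choosing one representative per fiber is an $\mathcal{L}_P$-embedding, hence elementary by quantifier elimination, so by (2) $M_0^+\preceq M^+$; thus $M_0^+\models\Phi_P\wedge T$ and $M^+\mapsto(M_0^+,\text{coloring})$ is a Borel reduction giving $T\le_B\mathcal{C}(\Phi_P\wedge T)$. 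Conversely, blowing up each point of a colored $L^+\models\Phi_P\wedge T$ into a fiber whose size encodes the color injectively yields a model whose natural copy of $L^+$ is again elementary by (2), hence models $T$, and the quotient recovers $(L^+,\text{coloring})$; this reduces $\mathcal{C}(\Phi_P\wedge T)\le_B T$. The main obstacle is exactly verifying that the blow-up satisfies the \emph{complete} theory $T$ and not merely some tame expansion of $T_P$ — this is where (2) is indispensable. Finally, when $P$ is finite $\mathfrak{M}_P$ is finite, so completeness of $T$ pins down its finite tame base up to isomorphism together with all finite fiber sizes and declares the rest infinite; a countable model is then the blow-up of this finite base with every infinite fiber of size $\aleph_0$, which is unique up to isomorphism.
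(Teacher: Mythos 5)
The paper offers no proof of this lemma at all --- it is imported verbatim as Fact 8.2 of \cite{BorelComplexityP} --- so there is nothing in-paper to compare against; your argument supplies what the citation leaves out, and it is essentially correct. The one step whose stated justification is weaker than what you actually use is the uniqueness clause of (1): to replace the chain $\overline b \mathrel{E_{p_i}} \overline a \mathrel{E_{p'}} \overline c$ (with $\overline a \in N^{n_i}$) by a chain lying inside $M$, it does not suffice that every $E_{p_i}$-class of $N$ meets $M$; you need a representative $\overline a' \in M^{n_i}$ in the same class of the \emph{conjunction} $E_{p_i}\wedge E_{p'}$ as $\overline a$, so that both links survive the substitution. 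This holds for the same reason as your key structural fact ($E_{p_i}\wedge E_{p'}$ is an invariant equivalence relation with finitely many classes, and ``there are exactly $k$ classes'' is first-order, hence transfers along $M\preceq N$), or one can bypass the issue by applying elementarity directly to $\exists \overline y\,\bigwedge_j \bigl(E_{p_i}(b_j,y_j)\wedge E_{p'}(y_j,c_j)\bigr)$. With that tightened, (1) is complete; (2), via the observation that each tame predicate is uniformly quantifier-free definable over a tuple of class representatives, is clean and correct; and (3) correctly isolates the quotient-by-$E_\infty$ / blow-up pair and the role of (2) in forcing the blow-up to satisfy the \emph{complete} theory $T$, which is exactly the mechanism behind the quoted special case $T_P \sim_B \mathcal{C}(\Phi_P)$.
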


Expanding on (3), we will want that Scott ranks are preserved as well:

\begin{lemma}
Suppose $(P, \leq, \delta) \in \mathbb{P}$ and $T$ is a tame expansion of $T_P$. Then $\mbox{sr}(\mathcal{C}(\Phi_P \land T)) \leq \mbox{sr}(T) \leq \omega + \mbox{sr}(\mathcal{C}(\Phi_P \land T))$.    
\end{lemma}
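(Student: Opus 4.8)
The plan is to relate the Scott rank of a tame expansion $T$ of $T_P$ to that of $\mathcal{C}(\Phi_P \land T)$, the theory describing $\omega$-colorings of models. Both bounds in the inequality $\mbox{sr}(\mathcal{C}(\Phi_P \land T)) \leq \mbox{sr}(T) \leq \omega + \mbox{sr}(\mathcal{C}(\Phi_P \land T))$ should follow by comparing models of the two theories and controlling how adding or forgetting the coloring predicates affects the back-and-forth relations $\equiv_\alpha$. The essential structural input is Lemma~\ref{tameFacts}(3), which tells us that a model of $T$ corresponds, up to Borel equivalence, to a coloring of a model of $\Phi_P$; I would make this correspondence explicit at the level of individual models rather than merely as a reduction.

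First I would set up the correspondence: given a model $M \models T$, one passes to a model of $\mathcal{C}(\Phi_P \land T)$ by taking the associated dense/elementary submodel structure from $\Phi_P$ together with the induced $\omega$-coloring that records which isomorphism type over the relevant quotient each element carries; conversely a coloring determines $M$ up to back-and-forth equivalence. The key observation is that the coloring predicates are definable data layered on top of the underlying $T_P$-structure, and that the tameness condition (each added relation is $E_p$-invariant for some $p$) means the extra structure factors through the finite quotients $E_p$. For the lower bound $\mbox{sr}(\mathcal{C}(\Phi_P \land T)) \leq \mbox{sr}(T)$, I would argue that $\equiv_\alpha$-equivalence of tuples in the colored model is implied by $\equiv_\alpha$-equivalence in $M$ (possibly after absorbing finitely many parameters), so that a witness to high Scott rank in the coloring produces a witness of at least as high rank in $M$; this is the more routine direction.

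For the upper bound $\mbox{sr}(T) \leq \omega + \mbox{sr}(\mathcal{C}(\Phi_P \land T))$, I would use the machinery of Lemmas~\ref{SRLemma2} through~\ref{SRLemma6}. The strategy is to show that once two tuples $\overline{a}, \overline{b}$ in $M$ agree to level $\omega + \beta$ where $\beta = \mbox{sr}(\mathcal{C}(\Phi_P \land T))$, the corresponding colored tuples agree to level $\beta$, hence are $\equiv_{\infty}$ in the coloring, which pulls back to $\equiv_{\infty \omega}$ in $M$. The extra $\omega$ summand is exactly the slack one expects from adding finitely many parameters or from the base step, paralleling the roles of $\omega$ in Lemma~\ref{SRLemma4} and Lemma~\ref{SRLemma6}; I would invoke Lemma~\ref{SRLemma5} and Lemma~\ref{SRLemma6} to handle the passage between $M$ and its expansion by constants, and Lemma~\ref{SRLemma2} to convert bounded-level agreement into $\equiv_{\infty}$.

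The main obstacle I anticipate is verifying precisely how the back-and-forth relation $\equiv_\alpha$ interacts with the coloring predicates: one must check that $\equiv_\alpha$ on the colored structure captures neither more nor less information than $\equiv_{\omega + \alpha}$ on $M$, and in particular that the $E_p$-invariant tame predicates do not encode back-and-forth distinctions beyond those already visible from the $E_p$-classes plus the coloring. The delicate point is that an element of $M$ is determined by the limit of its $E_p$-classes (by $\Phi_P$), so the coloring must faithfully record the type over these finite quotients; I would confirm by induction on $\alpha$ that this recording is lossless up to the additive $\omega$, being careful at the base case and at limit stages just as in the proof of Lemma~\ref{SRLemma4}.
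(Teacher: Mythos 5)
Your plan follows the paper's proof in outline: pass from $M \models T$ to the quotient $M/E_P$ (where $E_P$ is the intersection of all the $E_p$), which is naturally a model of $\Phi_P \land T$, equip it with a canonical coloring, and run two inductions transferring $\equiv_\alpha$ between $M$ and the colored quotient, one direction exact and the other costing an additive $\omega$. Two points need tightening. First, the coloring must be pinned down: the paper colors each class $[a]_{E_P}$ by its cardinality ($0$ if infinite, $|[a]_{E_P}|+1$ otherwise), and this is exactly the data that $\equiv_\omega$ --- but no finite level --- recovers in $M$; that is the sole source of the $+\omega$, not ``adding finitely many parameters'' (Lemmas~\ref{SRLemma2}, \ref{SRLemma5} and \ref{SRLemma6} play no role in the paper's argument, which is a self-contained pair of inductions). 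Your phrase ``the isomorphism type over the relevant quotient'' gestures at this but does not commit to it, and the successor step of the exact-transfer induction genuinely uses that $c([a_i]_{E_P}) = c([b_i]_{E_P})$ forces $|[a_i]_{E_P}| = |[b_i]_{E_P}|$ when the new element lands in an existing class. Second, your statement of the lower bound is turned around: to get $\mbox{sr}(\mathcal{C}(\Phi_P\land T)) \le \mbox{sr}(T)$ one needs that $\equiv_\alpha$ in the colored quotient implies $\equiv_\alpha$ in $M$ for tuples of distinct representatives (so a quotient pair that is $\equiv_\alpha$ lifts to an $M$-pair that is $\equiv_\alpha$), together with the $M$-to-quotient direction at level $\infty$ to see that the lifted pair is still not $\equiv_\infty$. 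The single implication you wrote (from $M$ to the colored model at the same level $\alpha$) is precisely the one that fails without the $\omega$ offset. Neither issue is fatal and the architecture is right, but as written the sketch does not yet contain the argument.
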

\begin{proof}

    Given $M \models T$ let $E_P$ denote the equivalence relation on $M$ defined by $E_P(a, b)$ if and only if $E_p(a, b)$ for all $p \in P$. $M/E_P$ is naturally a model of $\Phi_P \land T$; we define a coloring $c: M/E_P \to \omega$ via $c([a]_{E_P}) = 0$ if $[a]_{E_P}$ is infinite, otherwise $c([a]_{E_P}) = |[a]_{E_P}| + 1$. Every model of $\mathcal{C}(\Phi_P \land T)$ is isomorphic to $(M/E_P, c)$ for some $M \models T$, so it suffices to show that $\mbox{sr}((M/E_P, c)) \leq \mbox{sr}(M) \leq \omega+ \mbox{sr}((M/E_P, c))$. Let $N$ denote $(M/E_p, c)$.

    We first note by induction on $\alpha < \omega_1$ that for all tuples of distinct elements $(a_i: i < n), (b_i: i < n)$ from $M$, if $(N, [\overline{a}]_{E_P}) \equiv_\alpha (N, [\overline{b}]_{E_P})$ then $(M, \overline{a}) \equiv_\alpha (M, \overline{b})$. For $\alpha = 0$ we use that $\overline{a}$ and $\overline{b}$ are tuples of distinct elements; $\alpha$ limit is immediate. Suppose $(N, [\overline{a}]_{E_P}) \equiv_{\alpha+1} (N, [\overline{b}]_{E_P})$. We need to show $(M, \overline{a}) \equiv_{\alpha+1} (M, \overline{b})$. Suppose say $a \in M$, we need to find $b \in M$ with $(M, \overline{a}a) \equiv_\alpha (M, \overline{b}b)$. If $E_P(a, a_i)$ for some $i < n$ then use the fact that $|[a_i]_{E_P}| = |[b_i]_{E_P}|$ since $c([a_i]_{E_P}) = c([b_i]_{E_P})$. If no such $i < n$ exists then choose some $[b]_{E_P} \in N$ such that $(N, [\overline{a}a]_{E_P}) \equiv_\alpha (N, [\overline{b}b]_{E_P})$. Then by the inductive hypothesis, $(M, \overline{a} a) \equiv_\alpha (M, \overline{b} b)$ as desired.

    Next we note by induction on $\alpha <\omega_1$ that for all tuples $(a_i: i < n), (b_i: i < n)$ from $M$, if $(M, \overline{a}) \equiv_{\omega+\alpha} (M, \overline{b})$ then $(N, [\overline{a}]_{E_P}) \equiv_{\alpha} (N, [\overline{a}_{E_P}])$. The case $\alpha = 0$ uses that if $(M, \overline{a}) \equiv_\omega (M, \overline{b})$ then each $|[a_i]_{E_P}| = |[b_i]_{E_P}|$. The case of $\alpha$ limit is immediate. So suppose $(M, \overline{a}) \equiv_{\omega+\alpha+1} (M, \overline{b})$, and $ a\in M$ is given; we need to find $b \in M$  with $(N, [\overline{a}a]_{E_P}) \equiv_{\alpha} (N, [\overline{b}b]_{E_P})$. Choose $b \in M$ such that $(M, \overline{a}a) \equiv_\alpha (N, \overline{b}b)$ and apply the inductive hypothesis.

    The Lemma follows from these two inductions.
\end{proof}

Note then that if $\mbox{sr}(\mathcal{C}(\Phi_P\land T)) \geq \omega \cdot \omega$ then $\mbox{sr}(\mathcal{C}(\Phi_P \land T)) = \mbox{sr}(T)$.

We observe:
\begin{lemma}
Suppose $(P, \leq, \delta) \in \mathbb{P}$. Then:
    \begin{enumerate}
        \item If $P$ is infinite, then every tame expansion $T$ of $T_P$ is of the form $\mbox{Th}(\mathfrak{M}^+)$, where $\mathfrak{M}^+$ is a tame expansion of $\mathfrak{M}_P$;
        \item Two tame expansions of $\mathfrak{M}_P$ are elementarily equivalent if and only if they are isomorphic;
        \item Suppose $\mathfrak{M}^+$ is a tame expansion of $\mathfrak{M}_P$. Then every model of $\Phi_P \land T$ elementarily embeds into $\mathfrak{M}^+$.
    \end{enumerate}
\end{lemma}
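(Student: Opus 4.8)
The three statements are tightly linked, and the substance lies in part (2); I would derive (1) and (3) from it together with Lemma~\ref{tameFacts} and the universal property, noted above, that every model of $\Phi_P$ embeds as an elementary submodel of $\mathfrak{M}_P$. For (1): write $T=\mathrm{Th}(M^+)$ for a tame expansion $M^+$ of some $M\models T_P$. As $P$ is infinite, $T_P$ is complete, so $M\equiv\mathfrak{M}_P$, and the two have a common elementary extension $N\models T_P$. By Lemma~\ref{tameFacts}(1) the tame expansion $M^+$ extends to a tame expansion $N^+$ of $N$, with $M^+\preceq N^+$ by (2) of that lemma; restricting $N^+$ to $\mathfrak{M}_P\preceq N$ gives a tame expansion $\mathfrak{M}^+$ of $\mathfrak{M}_P$ with $\mathfrak{M}^+\preceq N^+$ (invariance of each predicate survives restriction, and we apply Lemma~\ref{tameFacts}(2) once more), so $\mathrm{Th}(\mathfrak{M}^+)=\mathrm{Th}(N^+)=T$. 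For (3): given $M^+\models\Phi_P\wedge T$ with $T=\mathrm{Th}(\mathfrak{M}^+)$, its reduct embeds elementarily into $\mathfrak{M}_P$, and $M^+$ is a tame expansion of that reduct (``$S_i$ is $E_{p_i}$-invariant'' is first-order, hence belongs to $T$); extending as before to a tame expansion $N^+$ of $\mathfrak{M}_P$ with $M^+\preceq N^+$, we get $N^+\equiv\mathfrak{M}^+$, whence $N^+\cong\mathfrak{M}^+$ by part (2), so $M^+\preceq N^+\cong\mathfrak{M}^+$ is the desired embedding.

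For part (2) the key observation is that the canonical map $\mathfrak{M}_P\to\varprojlim_Q\mathfrak{M}_Q$ is a bijection, where $Q$ ranges over the countable, upward-directed family of finite downward-closed subsets of $P$, $\mathfrak{M}_Q$ is the finite structure on $\prod_{q\in Q}\delta(q)$, and the bonding maps are the restrictions $\pi_Q\colon f\mapsto f\restriction Q$; surjectivity holds because $\mathfrak{M}_P$ is the full product (equivalently, by the extension axiom of $T_P$) and injectivity is exactly $\Phi_P$. Since each $S_i$ is $E_{p_i}$-invariant it factors through $\pi_Q$ for every $Q\supseteq P_{\le p_i}$, so a tame expansion of $\mathfrak{M}_P$ is the same datum as a compatible system of finite expansions $\mathfrak{M}^+_Q$ of the $\mathfrak{M}_Q$, with $\mathfrak{M}^+=\varprojlim_Q\mathfrak{M}^+_Q$.

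Now assume $\mathfrak{M}^+_1\equiv\mathfrak{M}^+_2$. First I would show that for every such $Q$ and every finite sublanguage $\mathcal{L}_0\subseteq\mathcal{L}^+$ the $\mathcal{L}_0$-reducts of the finite quotients $(\mathfrak{M}^+_1)_Q$ and $(\mathfrak{M}^+_2)_Q$ are isomorphic: the complete isomorphism type of such a finite structure is expressed by a single first-order sentence of $\mathfrak{M}^+$ (quantify over a full set of $\sim_Q$-class representatives, where $\sim_Q:=\bigwedge_{q\in Q}E_q$, and spell out the atomic diagram in the finitely many symbols of $\mathcal{L}_0$), and elementary equivalence transfers it. Second, I would glue these together: the sets $\mathrm{Iso}_{\mathcal{L}_0}(Q)$ of $\mathcal{L}_0$-isomorphisms $(\mathfrak{M}^+_1)_Q\to(\mathfrak{M}^+_2)_Q$ are finite and nonempty, and as $(Q,\mathcal{L}_0)$ grows they form an inverse system under projection along $\pi_Q$ and language reduction; by the finite-intersection property (using directedness and the nonemptiness just established) its inverse limit is nonempty. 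A point of the limit is a coherent family $(\tau_Q)_Q$ commuting with the projections, and $\tau:=\varprojlim_Q\tau_Q$ is then a bijection of $\mathfrak{M}_P$ preserving every $E_q$ and sending each $S_i^{(1)}$ to $S_i^{(2)}$, i.e.\ an isomorphism $\mathfrak{M}^+_1\cong\mathfrak{M}^+_2$.

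The step I expect to be most delicate is this final gluing, and specifically the possibility that $\mathcal{L}^+$ has infinitely many predicates: a finite-universe structure in an infinite language is not pinned down up to isomorphism by any single sentence, which is why the inverse system must be indexed by pairs $(Q,\mathcal{L}_0)$ and why one must verify the transition maps are well defined and the limit nonempty before concluding. (When $P$ is finite the inverse limit over $Q$ collapses and part (2) becomes the remark that elementarily equivalent finite structures are isomorphic, handled by the same sublanguage compactness.)
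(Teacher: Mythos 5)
Your proof is correct and follows essentially the same route as the paper: parts (1) and (3) are reduced to (2) via Lemma~\ref{tameFacts} exactly as in the paper (the paper uses the monster model where you use a common elementary extension), and for (2) the paper likewise passes to the finite quotients by the $E_p$'s, deduces they are isomorphic from elementary equivalence, and glues the isomorphisms by K\"onig's lemma, which is the same compactness argument as your inverse limit of nonempty finite sets indexed by $(Q,\mathcal{L}_0)$. If anything you are more explicit than the paper on the one delicate point --- that elementarily equivalent finite structures in an \emph{infinite} language are isomorphic, via finite sublanguages and the finite intersection property --- which the paper asserts without comment; just make sure your index pairs $(Q,\mathcal{L}_0)$ satisfy $\{E_q:q\in Q\}\subseteq\mathcal{L}_0$ so that the transition maps of the inverse system are well defined.
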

\begin{proof}
    (1) Let $T$ be given, say $T = \mbox{Th}(M^+)$ where $M^+$ is a tame expansion of $M\models T_P$. Then $M \equiv \mathfrak{M}_P$, since both are models of the complete theory $T_P$. Let $\mathfrak{C}$ be the monster of $T_P$. Then by Lemma~\ref{tameFacts}(1) there is a tame expansion $\mathfrak{C}^+$ of $\mathfrak{C}$ with $M^+ \subseteq \mathfrak{C}^+$. Let $\mathfrak{M}^+$ be the substructure of $\mathfrak{C}^+$ with universe $\mathfrak{M}$. Then by Lemma~\ref{tameFacts}(2), both $M^+$ and $\mathfrak{M}^+$ are elementarily substructures of $\mathfrak{C}^+$, hence they are elementarily equivalent as desired.

    (2) Let $\mathfrak{M}^+, \mathfrak{M}^{\dagger}$ be two elementarily equivalent tame expansions of $\mathfrak{M}_P$, say in the language $\mathcal{L}^+$. For each $p \in P$ let $X_p = [\mathfrak{M}_P]_{E_p}$ and let $\mathcal{L}^+_p$ be the set of all $R \in \mathcal{L}^+$ which are $E_p$-invariant; so given $q \in P$, $E_q \in \mathcal{L}^+_p$ if and only if $q \leq p$. Let $M_p$ be the $\mathcal{L}^+_p$-structure with universe $X_p$, where $M_p \models R(\overline{x})$ if and only if for some or any $\overline{a} \in \mathfrak{M}_P$ with $[\overline{a}]_{E_p} = \overline{x}$, we have $\mathfrak{M}^+ \models R(\overline{a})$; let $N_p$ be defined similarly but with $\mathfrak{M}^{\dagger}$ replacing $\mathfrak{M}^+$. Since $\mathfrak{M}^{\dagger}$ and $\mathfrak{M}^+$ are elementarily equivalent, so are $M_p$ and $N_p$; hence $M_p$ and $N_p$ are isomorphic. By Konig's lemma, we can find isomorphisms $f_p: M_p \cong N_p$ which cohere, in the sense that for all $q < p \in P$, and for all $[a]_{E_p} \in X_p$, writing $f_p([a]_{E_p}) = [b]_{E_p}$, we have that $f_q([a]_{E_q}) = [b]_{E_q}$. Then $(f_p: p \in P)$ induces an isomorphism from $\mathfrak{M}^+$ to $\mathfrak{M}^{\dagger}$.

    (3) Suppose $M^+ \models \Phi_P \land T$ is given, so $M^+$ is the tame expansion of $M \models \Phi_P$. We can suppose $M$ is an elementary substructure of $\mathfrak{M}$; let $\mathfrak{M}^{\dagger}$ be the tame expansion of $\mathfrak{M}$ with $M^+ \preceq \mathfrak{M}^{\dagger}$. Apply (2) to $\mathfrak{M}^+$ and $\mathfrak{M}^{\dagger}$.
\end{proof}

Finally, we consider tame expansions of crosscutting equivalence relations. To be precise, let $\mathbb{P}_{cc}$ denote the set of all triples $(P, \leq, \delta) \in \mathbb{P}$ such that $P$ is an infinite antichain. These are what we mean by crosscutting equivalence relations. We aim to show that tame expansions of crosscutting equivalence relations are as described in the introduction. We set some notation.

 Let $\mathbb{M}$ be the family of all sequences $(M_n: n < \omega)$ where $M_n$ is a finite relational structure of size at least two and the languages of the $M_n$'s are disjoint. For each $\overline{M} \in \mathbb{M}$ let $(P, \leq, \delta) \in \mathbb{P}_{cc}$ be defined by letting $(P, \leq)$ be the infinite antichian with universe $\omega$, and let $\delta(n) = |M_n|$. Let $\mathfrak{M}$ be the isomorphic copy of $\mathfrak{M}_P$ with underlying universe $\prod_n X_n$, where $X_n$ is the universe of $M_n$. Expand $\mathfrak{M}$ to an $\mathcal{L}_P \cup \bigcup_n \mathcal{L}(M_n)$-structure, denoted $\prod_n M_n$, by letting, for $R \in \mathcal{L}_m$, $\prod_n M_n \models R(a_i:i < i_*)$ if and only if $M_m \models R(a_i(m): i < i_*)$. We see at once that $\prod_n M_n$ is a tame expansion of $\mathfrak{M}$, and every tame expansion of $\mathfrak{M}$ arises in this way.

 For $\overline{M} \in \mathbb{M}$ and $(P, \leq, \delta)$ as above let $T_{\overline{M}} = \mbox{Th}(\prod_n M_n)$, let $\Phi_{\overline{M}} = T_{\overline{M}} \land \Phi_P$. So $T_{\overline{M}}$ is weakly minimal trivial, and is Borel equivalent to $\mathcal{C}(\Phi_{\overline{M}})$, and every model of $\Phi_{\overline{M}}$ can be embedded into $\prod_n M_n$. Moreover, if $\cong_{T_{\overline{M}}}$ is not Borel (equivalently $\cong_{\mathcal{C}(\Phi_{\overline{M}})}$ is not Borel) then $\mbox{sr}(T_{\overline{M}}) = \mbox{sr}(\mathcal{C}(\Phi_{\overline{M}}))$.

\section{Borel Complexity of Tame Expansions of Crosscutting Equivalence Relations}

In this section we analyze the Borel complexity of $T_{\overline{M}}$ for $\overline{M} \in \mathbb{M}$. We shall need some setup.
\begin{definition}
Suppose $M$ is a countable structure and $E$ is an invariant equivalence relation on $M$ and $D_n: n < \omega$ are absolutely indiscernible sets. Say that $D_n: n < \omega$ is $E$-large if each $D_n$ is $E$-saturated (note this implies $[D_n]_E \cap [D_m]_E = \emptyset$ for $n \not= m$, since $D_n$ and $D_m$ are disjoint). Say that $(D_n: n < \omega)$ is $E$-small if all the $D_n$'s are contained in a single $E$-class.
\end{definition}

\begin{lemma}
Suppose $M$ is a countable structure with $S_\infty$ dividing $\mbox{Aut}(M)$. Suppose $E$ is an invariant equivalence relation on $M$. Then there are absolutely indiscernible sets $(D_n: n < \omega)$ which are either $E$-small or else $E$-large.
\end{lemma}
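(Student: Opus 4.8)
The plan is to reduce to the case of absolutely indiscernible \emph{singletons} and then read off the dichotomy from an invariance argument on $\omega$. Since $S_\infty$ divides $\mbox{Aut}(M)$, Allison's theorem yields a nontrivial disjointifying invariant closure operation $\mbox{cl}$; fix $a \notin \mbox{cl}(\emptyset)$. (Note the disjointifying property, applied with $A = B = \{a\}$ and $C = \emptyset$, already forces $\mbox{cl}(a) \neq M$: it produces $a' \cong a$ with $a' \notin \mbox{cl}(a)$.) The preceding Corollary produces absolutely indiscernible sets all of whose elements are isomorphic to $a$, and I would first argue that these may be taken to be singletons $(a_n : n < \omega)$, i.e.\ so that for every $\sigma \in \mbox{Sym}(\omega)$ there is $g \in \mbox{Aut}(M)$ with $g(a_n) = a_{\sigma(n)}$ for all $n$. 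This is the one step that requires genuinely revisiting Allison's construction; see the last paragraph.

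Granting absolutely indiscernible singletons $(a_n)$, the rest is clean. Consider the binary relation on $\omega$ defined by $n \sim m$ iff $E(a_n, a_m)$. Because $E$ is invariant and each $\sigma \in \mbox{Sym}(\omega)$ is realized by some $g$ with $g(a_n) = a_{\sigma(n)}$, the relation $\sim$ is invariant under $\mbox{Sym}(\omega)$; being also an equivalence relation, it is either the all-equal relation or the all-distinct relation, since any $\mbox{Sym}(\omega)$-invariant equivalence relation on $\omega$ either identifies all pairs of distinct points or none (given distinct $n_0 \sim m_0$, move $(n_0,m_0)$ to any $(n,m)$). In the first case every $a_n$ lies in the single $E$-class $[a_0]_E$, so the disjoint singletons $(\{a_n\})$ form an $E$-small absolutely indiscernible family. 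In the second case the classes $[a_n]_E$ are pairwise disjoint and $E$-saturated, and they are absolutely indiscernible because $g(a_n) = a_{\sigma(n)}$ forces $g([a_n]_E) = [a_{\sigma(n)}]_E$; thus $([a_n]_E)$ is $E$-large.

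The hard part is precisely the reduction to singletons. One cannot simply choose one point from each set in a Corollary family, nor thin to a $\mbox{cl}$-independent subfamily: absolutely indiscernible sets need not be $\mbox{cl}$-independent (for instance, distinct points inside a single infinite $E$-class can be absolutely indiscernible while each lies in the closure of the others). The direct route is to show that the back-and-forth underlying the Corollary --- which uses disjointifying as an extension property to conjugate any two $\mbox{cl}$-independent tuples of copies of $a$ --- in fact produces absolutely indiscernible singletons, using that $\mbox{Aut}(M)$ is a closed subgroup of the full symmetric group on $M$ to pass from finite partial permutations to full ones.

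As a fallback that sidesteps singletons in one of the two cases, I would split on whether $[a]_E \subseteq \mbox{cl}(a)$. If not, pick $a'$ with $E(a, a')$ and $a' \notin \mbox{cl}(a)$; by Lemma~\ref{clLemma} the operation $\mbox{cl}'(\cdot) = \mbox{cl}(\cdot\, a)$ is nontrivial disjointifying on $N = M(a)$, and $a' \notin \mbox{cl}'(\emptyset)$, so the Corollary applied in $N$ to $a'$ yields an absolutely indiscernible family whose elements are all conjugate to $a'$ over $a$; any such conjugate $g(a')$ satisfies $E(a, g(a'))$ since $g$ fixes $a$, so the whole family lies in $[a]_E$ and is $E$-small. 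If instead $[a]_E \subseteq \mbox{cl}(a)$, I would take $\mbox{cl}$-independent copies $a_0 = a, a_1, \dots$ with each $a_n \forkindep_\emptyset \{a_0, \dots, a_{n-1}\}$; then $\neg E(a_j, a_k)$ for $j \neq k$ (otherwise $a_k \in [a_j]_E \subseteq \mbox{cl}(a_j) \subseteq \mbox{cl}(a_0 \cdots a_{k-1})$, contradicting independence and $a_k \notin \mbox{cl}(\emptyset)$), so the saturations $[a_n]_E$ are disjoint and $E$-saturated --- but to conclude $E$-large I still need this sequence to be absolutely indiscernible, which throws me back on the same construction issue (one could instead try to transfer $S_\infty$-dividing to the quotient $M/E$, where the algebraicity $[a]_E \subseteq \mbox{cl}(a)$ should keep $\mbox{cl}$ disjointifying). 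Either way, the essential obstacle is manufacturing a family that is simultaneously absolutely indiscernible and transverse to $E$.
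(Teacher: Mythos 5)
Your main route has a genuine gap at exactly the point you flag: the reduction to absolutely indiscernible \emph{singletons}. The black boxes you are allowed to use (the theorem that $S_\infty$ dividing $\mbox{Aut}(M)$ is equivalent to the existence of absolutely indiscernible sets, and the corollary producing absolutely indiscernible sets of conjugates of a fixed $a \notin \mbox{cl}(\emptyset)$) only deliver \emph{sets} $D_n$; to extract singletons you must choose representatives $a_n \in D_n$ so that for \emph{every} $\sigma \in \mbox{Sym}(\omega)$ some automorphism realizes $a_n \mapsto a_{\sigma(n)}$ exactly, not merely $D_n \mapsto D_{\sigma(n)}$. Correcting a given $g$ inside the targets requires the setwise stabilizer of the family to act transitively and independently on each $D_m$, which is not given by anything quoted, so this step is an unproved strengthening of Allison's result rather than a reduction. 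Your fallback does not close the hole: the case $[a]_E \not\subseteq \mbox{cl}(a)$ is handled correctly (naming $a$, invoking Lemma~\ref{clLemma}, and noting that conjugates of $a'$ over $a$ stay in $[a]_E$ gives an $E$-small family), but in the complementary case you concede you cannot certify absolute indiscernibility of the sets $[a_n]_E$, so the $E$-large half of the dichotomy is left open.

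The paper's proof avoids the representative-selection problem entirely by enlarging the structure \emph{before} invoking Allison: let $N$ be $M$ together with a new sort for $M/E$ and the projection $\pi: M \to M/E$. Since $\mbox{Aut}(N) \cong \mbox{Aut}(M)$, one gets a nontrivial disjointifying closure operation $\mbox{cl}$ on $N$ that already ``sees'' the quotient. If $M/E \subseteq \mbox{cl}(\emptyset)$, then some fiber satisfies $\pi^{-1}(\alpha) \not\subseteq \mbox{cl}(\alpha)$; naming $\alpha$ and applying Lemma~\ref{clLemma} produces absolutely indiscernible sets inside the single class $\pi^{-1}(\alpha)$, i.e.\ an $E$-small family. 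Otherwise some $\alpha \in M/E$ lies outside $\mbox{cl}(\emptyset)$, and one takes absolutely indiscernible sets $D_n'$ of conjugates of $\alpha$ \emph{in the quotient sort} and pulls them back: $D_n = \pi^{-1}(D_n')$ is automatically $E$-saturated, pairwise disjoint, and absolutely indiscernible because every automorphism of $N$ permuting the $D_n'$ restricts to an automorphism of $M$ permuting their preimages. This is precisely the ``transfer $S_\infty$-dividing to the quotient'' idea you mention in your last sentence; carried out this way it needs no choice of representatives and no singleton lemma. Your $\mbox{Sym}(\omega)$-invariance observation (any invariant equivalence relation on the index set is all or nothing) is a nice shortcut, but it only becomes usable once you have a family already transverse to $E$, which is the very thing the quotient-sort construction manufactures.
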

\begin{proof}
Let $N$ be obtained from $M$ by adding a sort for $M/E$ and adding the projection map $\pi: M \mapsto M/E$. Since $S_\infty$ divides $\mbox{Aut}(M)$, also $S_\infty$ divides $\mbox{Aut}(N)$ (they are isomorphic automorphism groups). Thus we can find a nontrivial disjointifying closure operation $\mbox{cl}$ on $N$. There are now two cases:

Case 1: suppose $M/E \subseteq \mbox{cl}(\emptyset)$. Then since $\mbox{cl}(\emptyset) \not= N$, we must be able to find some $ \alpha \in M/E$ such that $\pi^{-1}(\alpha) \not \subseteq \mbox{cl}(\alpha)$. Fix some such $\alpha$ and let $a \in \pi^{-1}(\alpha) \backslash \mbox{cl}(\alpha)$. Let $N' := N(\alpha)$ (add a constant for $\alpha$) and let $\mbox{cl}'$ be defined by $\mbox{cl}'(A) = \mbox{cl}(A\alpha)$. By Lemma~\ref{clLemma}, $\mbox{cl}'$ is a nontrivial disjointifying closure operation on $N'$, and $a \not \in \mbox{cl}'(\emptyset)$, so $N'$ admits absolutely indiscernible sets $D_n: n < \omega$ with each $b \in D_n$ satisfying $(N', b) \cong (N', a)$. In particular each $D_n \subseteq M$ and each $b \in D_n$ has $\pi(b) = \alpha$. Thus $D_n: n < \omega$ is $E$-small.

Case 2: otherwise, pick $\alpha \in M/E \backslash \mbox{cl}(\emptyset)$. Pick absolutely indiscernible sets $D'_n: n < \omega$ such that for all $\beta \in D'_n$, $(N, \beta) \cong (N, \alpha)$. Let $D_n = \pi^{-1}(D'_n)$ for each $n < \omega$. Then $D_n: n < \omega$ is $E$-large. 
\end{proof}

\begin{definition} \label{cross-cutting} {A countable structure $M$ {\em admits cross-cutting absolutely indiscernible sets}
if there are $\Aut(M)$-invariant equivalence relations $E_0,E_1$ and families of sets $\mathcal{D}_*^0=\{D^0_n:n\in\omega\}$,
$\mathcal{D}^1_*=\{D^1_m:m\in\omega\}$ satisfying
\begin{enumerate}
    \item  For all $a\in\mathcal{D}^0_*$, $b\in \mathcal{D}^1_*$, there is $c\in M$ such that $M\models E_0(c,a)\land E_1(c,b)$;
    \item  For all distinct $n\neq n'$, $a\in D^0_n$, $a'\in D^0_{n'}$ implies $M\models \neg E_0(a,a')$ (and dually for 
    $\mathcal{D}^1_*$ and $E_1$.)
    \item  For all pairs $\sigma_0,\sigma_1\in \mbox{Sym}(\omega)$ there is some $\tau\in\Aut(M)$ such that for all
    $n,m\in\omega$, $\tau[D^0_n]=D^0_{\sigma_0(n)}$ and $\tau[D^1_m]=D^1_{\sigma_1(m)}$.
\end{enumerate}
}
\end{definition}

The following is Theorem 2.1.3 of \cite{BorelComplexityP}, where $\mathcal{C}(M)$ is the class of all $\omega$-colorings of $M$:

\begin{theorem} \label{binary}   Suppose a countable structure $M$ admits cross-cutting absolutely indiscernible sets.
Then $\mathcal{C}(M)$ is Borel complete.  
\end{theorem}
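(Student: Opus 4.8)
The plan is to realize $\cong_{\mathcal{C}(M)}$ as Borel complete by reducing into it from a concrete Borel complete equivalence relation adapted to the two independent families. Let $\cong_{\mathrm{mat}}$ be the equivalence relation on finitely-colored matrices $A\colon \omega\times\omega \to \{0,1,2\}$ where $A\sim A'$ iff there are $\sigma_0,\sigma_1\in\mathrm{Sym}(\omega)$ with $A(n,m)=A'(\sigma_0(n),\sigma_1(m))$ for all $n,m$, i.e. equivalence up to \emph{independent} permutation of rows and columns. I would first check that $\cong_{\mathrm{mat}}$ is Borel complete by reducing from isomorphism of countable graphs (Borel complete by \cite{FS}): given a graph $H$ on $\omega$ with edge relation $R_H$, set $A_H(n,n)=2$, set $A_H(n,m)=1$ when $n\neq m$ and $\{n,m\}\in R_H$, and set $A_H(n,m)=0$ otherwise. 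The diagonal entries are the unique occurrences of the color $2$ and form a bijection between rows and columns, so any witnessing pair $(\sigma_0,\sigma_1)$ must satisfy $\sigma_0=\sigma_1$; the resulting single permutation is then exactly a graph isomorphism. Hence $H\cong H'$ iff $A_H\sim A_{H'}$, and $H\mapsto A_H$ is plainly Borel.

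The second and main step is a Borel reduction $\cong_{\mathrm{mat}}\,\leq_B\,\cong_{\mathcal{C}(M)}$ using the crosscutting data $E_0,E_1,\mathcal{D}^0_*,\mathcal{D}^1_*$ of Definition~\ref{cross-cutting}. For each $n$ let $S^0_n$ be the $E_0$-saturation of $D^0_n$ and for each $m$ let $S^1_m$ be the $E_1$-saturation of $D^1_m$; by condition (2) the $S^0_n$ are pairwise disjoint and likewise the $S^1_m$, so the cells $\mathrm{Cell}(n,m):=S^0_n\cap S^1_m$ partition the elements lying in both a row- and a column-saturation, and by condition (1) each cell is nonempty. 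Given $A$, define a coloring $c_A$ of $M$ by assigning to each $x\in\mathrm{Cell}(n,m)$ the color $A(n,m)+1$, a fixed distinct color to elements of $\bigcup_n S^0_n$ lying in no cell, symmetrically for columns, and a default color elsewhere. This is Borel in $A$.

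For the forward direction, if $A(n,m)=A'(\sigma_0(n),\sigma_1(m))$, apply condition (3) to obtain $\tau\in\Aut(M)$ with $\tau[D^0_n]=D^0_{\sigma_0(n)}$ and $\tau[D^1_m]=D^1_{\sigma_1(m)}$; since $E_0,E_1$ are $\Aut(M)$-invariant, $\tau$ carries $S^0_n$ to $S^0_{\sigma_0(n)}$ and $S^1_m$ to $S^1_{\sigma_1(m)}$, hence $\mathrm{Cell}(n,m)$ to $\mathrm{Cell}(\sigma_0(n),\sigma_1(m))$, so $\tau$ witnesses $(M,c_A)\cong(M,c_{A'})$. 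The crux is the converse: a color-preserving $g\in\Aut(M)$ must induce a pair of permutations realizing $A\sim A'$. Here $g$ preserves $E_0,E_1$ and the colored cell structure, and condition (2) is what forces distinct rows to occupy distinct $E_0$-classes, so that $g$ permutes the row-saturations and column-saturations and descends to $(\sigma_0,\sigma_1)$ with $A(n,m)=A'(\sigma_0(n),\sigma_1(m))$.

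I expect the genuine difficulty to be exactly this converse, specifically controlling how $g$ interacts with the internal $E_0$-class structure of a single $D^0_n$: condition (2) separates different families but does not by itself pin each family to a single $E_0$-class, so a color-preserving automorphism could a priori split one row-saturation across several. The resolution is either to first normalize the data so that each $D^i_n$ is a union of $E_0$- (resp.\ $E_1$-) blocks that $g$ must respect, or to build enough block information into the coloring—taking care not to record so much that either the forward direction or the absolute indiscernibility in condition (3) is destroyed. Everything else—nonemptiness and disjointness of the cells and Borelness of $A\mapsto c_A$—is routine bookkeeping.
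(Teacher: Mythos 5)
First, a remark on context: the paper does not prove this statement at all --- it is quoted as Theorem 2.1.3 of \cite{BorelComplexityP} --- so there is no internal proof to compare against and your argument has to stand on its own. Your first step (Borel completeness of matrices up to independent row and column permutations, via the diagonal-marked coding of graph isomorphism) is fine. The problem is the converse of your main reduction, exactly the point you flag at the end; it is not a verification left to the reader but a genuine failure of the construction as written. Concretely, let $M=\omega^2\times\omega^2$ with $E_0$ (resp.\ $E_1$) being agreement in the first (resp.\ second) coordinate, $D^0_n=\{((n,k),(0,0)):k<\omega\}$ and $D^1_m=\{((0,0),(m,l)):l<\omega\}$. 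This satisfies Definition~\ref{cross-cutting}, every automorphism of $M$ has the form $(u,v)\mapsto(\pi_0(u),\pi_1(v))$ with $\pi_0,\pi_1\in\mbox{Sym}(\omega^2)$, and $S^0_n=(\{n\}\times\omega)\times\omega^2$, $S^1_m=\omega^2\times(\{m\}\times\omega)$. Now take $A$ with a single nonzero entry at $(0,0)$ and $A'$ with nonzero entries exactly at $(0,0)$ and $(0,1)$. Then $A\not\sim A'$, since independent row/column permutations preserve the number of nonzero entries. But the nonzero-color set of $c_A$ is the rectangle $S^0_0\cap S^1_0$ while that of $c_{A'}$ is the rectangle $S^0_0\cap(S^1_0\cup S^1_1)$, so taking $\pi_0=\mbox{id}$ and $\pi_1$ any permutation of $\omega^2$ carrying $\{0\}\times\omega$ onto $\{0,1\}\times\omega$ yields an isomorphism $(M,c_A)\cong(M,c_{A'})$. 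Hence $A\mapsto c_A$ is not a reduction. The root cause is the one you name: the coloring only remembers the matrix after each row and column is blown up into the family of $E_0$- (resp.\ $E_1$-) classes inside one saturation, and independent permutations of classes can regroup these blowups.

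Neither of your suggested repairs closes this. Renormalizing the $D^i_n$'s changes nothing, because the obstruction lives in $\Aut(M)$ itself rather than in the choice of families; and adding colors marking $\bigcup_n D^0_n$ and $\bigcup_m D^1_m$ does not recover the partitions into individual $D^0_n$'s (in the example above $\bigcup_m D^1_m$ meets each $E_1$-class in one point and $\pi_1$ remains unconstrained). Your composite map, graphs to diagonal-marked matrices to colorings, does happen to defeat this particular counterexample --- the diagonal $2$'s force the blown-up row- and column-blocks to be matched coherently when every $E_0$-class meets every $E_1$-class --- but that argument leans on special structure of this $M$ and does not obviously extend to structures where $E_0$- and $E_1$-classes can fail to intersect. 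Extracting a genuine pair $(\sigma_0,\sigma_1)$ from an arbitrary color-preserving automorphism is the real content of the theorem, and it remains open in your write-up.
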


Putting this together, we obtain:

\begin{theorem}
Suppose $\overline{M} = (M_n: n < \omega) \in \mathbb{M}$. Suppose $M \models \Phi_{\overline{M}}$ is countable. If $S_\infty$ divides $\mbox{Aut}(M)$, then $\mathcal{C}(\Phi_{\overline{M}})$ is Borel complete.
\end{theorem}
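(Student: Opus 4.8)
The plan is to reduce the statement to Theorem~\ref{binary} by showing that whenever $S_\infty$ divides $\Aut(M)$, the structure $M$ admits cross-cutting absolutely indiscernible sets; then $\mathcal{C}(M)$ is Borel complete, and since $M \models \Phi_{\overline{M}}$ this yields that $\mathcal{C}(\Phi_{\overline{M}})$ is Borel complete. So the real content is producing two invariant equivalence relations $E_0, E_1$ and two families $\mathcal{D}^0_*, \mathcal{D}^1_*$ of absolutely indiscernible sets satisfying the three cross-cutting conditions of Definition~\ref{cross-cutting}: the crossing/amalgamation condition (1), the disjointness-across-classes condition (2), and the joint-indiscernibility condition (3).

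First I would unpack the structure of $M$. Since $M \models \Phi_{\overline{M}}$, each element of $M$ is essentially a function $a \in \prod_n X_n$ (with $E_p(a,b)$ meaning $a(n) = b(n)$ at the relevant coordinate $n = p$, using that $P$ is an infinite antichain), and $\Phi_{\overline{M}}$ forces $a = b$ whenever $a$ and $b$ agree on every coordinate. The key point is that distinct coordinates $n$ give \emph{crosscutting} equivalence relations $E_n$, in the sense that knowing $a(n)$ for $n$ in one set and $a(m)$ for $m$ in a disjoint set places no constraint on each other — this is the amalgamation axiom (third bullet of $T_P$), which guarantees condition~(1). My strategy would be to split the index set $\omega$ into two infinite halves and use one half to build $\mathcal{D}^0_*$ and the other to build $\mathcal{D}^1_*$, so that the two families live on ``orthogonal'' coordinates and automatically cross-cut.

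Concretely, I would invoke the machinery already set up: since $S_\infty$ divides $\Aut(M)$, there is a nontrivial disjointifying closure operation $\mbox{cl}$ on $M$, and by the Corollary we can extract absolutely indiscernible sets from elements outside $\mbox{cl}(\emptyset)$. I expect to need the preceding Lemma producing $E$-large or $E$-small families, applied with $E$ chosen to be a single coordinate equivalence relation $E_n$ (or a partial conjunction). The heart of the argument is to run this extraction \emph{twice}, on two disjoint blocks of coordinates, and to arrange that the absolutely indiscernible sets produced on the first block and on the second block can be made jointly absolutely indiscernible — i.e.\ that an arbitrary pair $(\sigma_0, \sigma_1) \in \mbox{Sym}(\omega)^2$ is realized by a single $\tau \in \Aut(M)$. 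Here I would exploit that automorphisms acting on disjoint coordinate blocks can be combined independently (again by the amalgamation/product structure of $\mathfrak{M}_P$), so a permutation of the first block and an independent permutation of the second block extend to a single automorphism.

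The main obstacle I anticipate is verifying condition~(1), the crossing condition, together with condition~(2) simultaneously: I must ensure that the sets $D^0_n$ genuinely lie in distinct $E_0$-classes and the $D^1_m$ in distinct $E_1$-classes, while still being freely amalgamable across the two families. The natural choice is to take $E_0$ and $E_1$ to be (conjunctions over) the two disjoint coordinate blocks, make each $D^0_n$ a full $E_0$-class's worth of structure coming from the first block and each $D^1_m$ from the second, and then use the product-amalgamation axiom to find, for any $a \in \mathcal{D}^0_*$ and $b \in \mathcal{D}^1_*$, a common element $c$ agreeing with $a$ on the first block and with $b$ on the second. The delicate part is checking that the families extracted from the abstract closure operation can be \emph{chosen} to respect this coordinate decomposition — i.e.\ transferring the abstractly-obtained absolute indiscernibility into the concrete coordinate picture — and confirming that the two resulting families remain absolutely indiscernible \emph{jointly}, not merely separately. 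Once conditions (1)--(3) are checked, Theorem~\ref{binary} finishes the proof.
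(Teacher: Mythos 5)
Your high-level plan (route everything through Theorem~\ref{binary}) matches the paper, but the way you propose to produce the cross-cutting families diverges from the paper's argument at the two places where the real work happens, and both divergences leave genuine gaps. First, you aim to show that $M$ \emph{itself} admits cross-cutting absolutely indiscernible sets. The paper does not do this, and for good reason: condition (1) of Definition~\ref{cross-cutting} demands that for every $a\in\mathcal{D}^0_*$ and $b\in\mathcal{D}^1_*$ there is $c$ in the structure with $E_0(c,a)\wedge E_1(c,b)$, and if $E_0,E_1$ are agreement on two coordinate blocks $I_0,I_1$, such a $c$ is an element agreeing with $a$ on $I_0$ and with $b$ on $I_1$. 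An arbitrary countable elementary submodel $M\preceq\prod_n M_n$ need not contain such mixtures. The paper therefore builds a \emph{new} countable model $N\models\Phi_{\overline{M}}$, namely all $a\in\prod_n M_n$ whose restrictions to $I_0$ and to $I_1$ each occur as restrictions of elements of $M$; in $N$ the amalgamation in (1) is automatic, and condition (3) is also automatic because independent automorphisms on the two blocks glue. Your proposal never introduces this enlargement, and the ``delicate part'' you flag (joint indiscernibility of two separately-extracted families) is exactly the issue this construction is designed to dissolve --- the paper extracts \emph{one} family $D_n$ and pulls it back along both restriction maps, rather than extracting two families on two blocks.

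Second, and more seriously, you are missing the step that makes condition (2) work: the choice of the invariant equivalence relation $E$ in the $E$-small/$E$-large dichotomy. The paper takes $E$ to be the \emph{finite-difference} relation ($a\,E\,b$ iff $a(n)=b(n)$ for all but finitely many $n$), rules out the $E$-small alternative because each $E$-class has cli automorphism group (so $S_\infty$ cannot divide it), and thereby concludes that elements of distinct $D_n$'s differ on an \emph{infinite} set $I_{a,b}$ of coordinates. Only then can one split $\omega$ into $I_0\cup I_1$ so that every $I_{a,b}$ meets both halves infinitely, which is what makes the pulled-back families $D^i_n$ lie in pairwise distinct $E_i$-classes for each $i<2$. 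Your suggestion to apply the dichotomy with $E$ a single coordinate relation $E_n$ (or a finite conjunction) cannot work: such relations have only finitely many classes, so no infinite $E$-large family exists, and the $E$-small case tells you nothing usable. Without the finite-difference relation and the cli argument, you have no control over where elements of distinct $D_n$'s disagree, and the coordinate-splitting step has nothing to bite on.
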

\begin{proof}
We can suppose $M$ is an elementary submodel of $\prod_n M_n$. 
 Let $E$ be the invariant equivalence relation on $\prod_n M_n$ defined by: $a E b$ if and only if for all but finitely many $n$, $a(n) = b(n)$.
 
    By the preceding lemma, we can find some absolutely indiscernible sets $D_n: n < \omega$ for $M$ which are either $E$-small or $E$-large. $E$-small is impossible, since each $E$-class has cli automorphism group, hence $S_\infty$ cannot divide it. Thus $D_n: n < \omega$ are $E$-large. In particular, for all $n \not= m$, for all $a \in D_n, b \in D_m$, $a$ and $b$ differ infinitely often; call this infinite set $I_{a, b}$.

    It is now straightforward to find infinite disjoint subsets $I_0, I_1 \subseteq \omega$ such that for all $n \not= m$, $a \in D_n, b \in D_m$ we have $I_{a, b} \cap I_0$ and $I_{a, b} \cap I_1$ are both infinite. We can also arrange $I_0 \cup I_1 = \omega$.

    Let $N$ be the countable set of all $a \in \prod_n M_n$ such that for each $i < 2$, $a \restriction_{I_i} \in \{b \restriction_{I_i}: b \in M\}$. For $i < 2$ let $E_i$ be the equivalence relation on $N$ given by: $E_i(a, b)$ if $a \restriction_{I_i} = b \restriction_{I_i}$. For $i < 2$ and $n < \omega$ let $D^i_n$ be the set of all $a \in N$ such that $a \restriction_{I_i} \in \{b \restriction_{I_i}: b \in D_n\}$. Then it is straightforward to check that $(E_0, E_1, D^i_n: i < 2, n < \omega)$ witness that $N$ admits crosscutting absolutely indiscernible sets, so $\mathcal{C}(\Phi_{\overline{M}})$ is Borel complete by Theorem~\ref{binary}.
\end{proof}
\begin{corollary}
Either $T_{\overline{M}}$ is Borel complete or else $\|T_{\overline{M}}\| < \beth_{\omega_1}$. 
\end{corollary}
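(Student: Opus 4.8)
The plan is to establish the dichotomy by a case split on whether $S_\infty$ divides $\mathrm{Aut}(M)$ for some countable model $M$ of $\Phi_{\overline{M}}$. The preceding theorem does the hard work in one direction, so the corollary should follow by combining it with the theory of potential cardinality and absolute indiscernibles developed earlier.

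\medskip\noindent\textbf{Approach.}
First I would recall that $T_{\overline{M}}$ is Borel equivalent to $\mathcal{C}(\Phi_{\overline{M}})$, so it suffices to prove the dichotomy for the latter. The key dividing line is whether some countable $M \models \Phi_{\overline{M}}$ has $S_\infty$ dividing $\mathrm{Aut}(M)$. If such an $M$ exists, then by the theorem immediately preceding, $\mathcal{C}(\Phi_{\overline{M}})$ is Borel complete, and hence so is $T_{\overline{M}}$; this disposes of the first alternative. So the substance of the proof is to show that if $S_\infty$ divides $\mathrm{Aut}(M)$ for \emph{no} countable model $M \models \Phi_{\overline{M}}$, then $\|T_{\overline{M}}\| < \beth_{\omega_1}$.

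\medskip\noindent\textbf{Key steps for the bounded case.}
Assuming $S_\infty$ divides $\mathrm{Aut}(M)$ for no countable $M \models \Phi_{\overline{M}}$, I would invoke the characterization that $S_\infty$ divides $\mathrm{Aut}(M)$ precisely when $M$ admits absolutely indiscernible sets, so no countable model of $\Phi_{\overline{M}}$ admits absolutely indiscernible sets. The potential canonical Scott sentences of $\Phi_{\overline{M}}$ are exactly the $\css(M)$ for $M$ ranging over the (forcing-potential) models, and by the general theory of $\|\cdot\|$ one bounds $\|T_{\overline{M}}\| = \|\mathcal{C}(\Phi_{\overline{M}})\|$ by controlling the back-and-forth complexity of these models. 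The absence of absolutely indiscernible sets in every countable model forces each model to admit a small base (or an analogous rigidifying structure), which caps Scott ranks below $\omega_1$ uniformly across forcing extensions; this in turn caps the number of potential Scott sentences below $\beth_{\omega_1}$. Concretely I expect the bound $\|T_{\overline{M}}\| < \beth_{\omega_1}$ to come from showing that potential models are determined up to back-and-forth equivalence by countably much data (the $E_P$-class structure together with the finite local structures $M_n$), whose potential canonical Scott sentences number fewer than $\beth_{\omega_1}$.

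\medskip\noindent\textbf{Main obstacle.}
The delicate point is the passage from ``no countable model admits absolutely indiscernible sets'' to the quantitative bound $\|T_{\overline{M}}\| < \beth_{\omega_1}$, which is a statement about \emph{all} potential (forcing-theoretic) models, not just the genuinely countable ones. I would handle this by an absoluteness argument: the failure of $S_\infty$ to divide $\mathrm{Aut}(M)$ is, suitably phrased, forcing-invariant for the models in question, so the structural consequences (existence of a base bounding Scott rank) persist in generic extensions. The cleanest route is likely to show directly that in the non-dividing case the potential cardinality is computed by a tree of countable height whose levels are indexed by the finitely-generated local data, giving the strict inequality below $\beth_{\omega_1}$; verifying that this tree genuinely enumerates $\CSS(\Phi_{\overline{M}})_{\ptl}$ is where the real care is needed.
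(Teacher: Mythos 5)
The first half of your argument---reducing to $\mathcal{C}(\Phi_{\overline{M}})$ and using the preceding theorem to dispose of the case where some countable model of $\Phi_{\overline{M}}$ has $S_\infty$ dividing its automorphism group---matches the paper. The problem is the other case. Your proposed mechanism is that the absence of absolutely indiscernible sets forces every (potential) model to admit a small base, hence bounded Scott rank below $\omega_1$, hence fewer than $\beth_{\omega_1}$ potential Scott sentences. This chain is not just unjustified but false: bounding Scott ranks below $\omega_1$ is equivalent to $\cong_{T_{\overline{M}}}$ being Borel, and the paper's Theorem~\ref{TameExpThm} shows that Borelness of isomorphism is governed by a different condition (freeness of the $G_n$-actions) than Borel completeness. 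Concretely, take $M_n$ of size $n+2$ with a linear order on all but two interchangeable points: then $\mbox{Aut}(\prod_n M_n)$ is abelian, so $S_\infty$ divides no countable model's automorphism group and $T_{\overline{M}}$ is not Borel complete, yet infinitely many $G_n$ act non-freely, so $\mbox{sr}(T_{\overline{M}}) = (2^{\aleph_0})^+$ and there is no finite base and no Scott-rank bound. So the second alternative of the dichotomy must be reached without ever bounding Scott ranks.

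The paper's actual route is to cite two external results from \cite{expansions}: if $S_\infty$ divides the automorphism group of no countable model of $\mathcal{C}(\Phi_{\overline{M}})$, then no expansion of $\mathcal{C}(\Phi_{\overline{M}})$ is Borel complete (Theorem 5.5 there), and any sentence with no Borel complete expansion has potential cardinality below $\beth_{\omega_1}$ (Corollary 6.1 there). These results do not proceed by controlling back-and-forth ranks of individual models, and your closing suggestion of enumerating $\CSS(\Phi_{\overline{M}})_{\ptl}$ by a ``tree of countable height'' of local data is not developed enough to substitute for them. To repair the proof you should replace the base/Scott-rank argument with an appeal to (or reproof of) those two results.
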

\begin{proof}
    Suppose $T_{\overline{M}}$ is not Borel complete, i.e. $\mathcal{C}(\Phi_{\overline{M}})$ is not Borel complete. Then $S_\infty$ does not divide the automorphism group of any countable model of $\Phi_{\overline{M}}$, by the preceding theorem. Hence also $S_\infty$ does not divide the automorphism group of any countable model of $\mathcal{C}(\Phi_{\overline{M}})$. By Theorem 5.5 of \cite{expansions}, no expansion of $\mathcal{C}(\Phi_{\overline{M}})$ is Borel complete. By Corollary 6.1 of \cite{expansions}, $\|\mathcal{C}(\Phi_{\overline{M}}) \| < \beth_{\omega_1}$, i.e. $\|T_{\overline{M}}\| < \beth_{\omega_1}$.
\end{proof}

We discuss the possibilities for the potential cardinality $\|T_{\overline{M}}\| = \|\mathcal{C}(\Phi_{\overline{M}})\|$. We give a simple lower bound in a more general context:

\begin{theorem}\label{LowerBound}
    Suppose $(P, \leq, \delta) \in \mathbb{P}$ with $P$ infinite and $T$ is a tame expansion of $T_P$. Then "countable sets of reals" Borel embeds into $T$, hence $\|T\| \geq \beth_2$.
\end{theorem}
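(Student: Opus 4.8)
The plan is to exhibit a Borel reduction from the equivalence relation $=^+$ of countable sets of reals (whose potential cardinality is $\beth_2$) into $\cong_T$. By Lemma~\ref{tameFacts}(3) it suffices to reduce into $\mathcal{C}(\Phi_P \wedge T)$, i.e.\ to build, Borel-uniformly from a countable set $A$ of reals, a dense submodel of $\mathfrak{M}_P$ (carrying its unique tame expansion, elementary in the canonical $\mathfrak{M}^+$ by Lemma~\ref{tameFacts}(2)) together with an $\omega$-coloring, in such a way that two of these colored models are isomorphic exactly when the coded sets agree.

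The key device is a countable \emph{frame} that rigidifies every model we build. Identify the points of $\mathfrak{M}_P$ with the branches $\prod_{p}\delta(p)$, so that ``reals'' are literally elements of $\mathfrak{M}_P$. Because $P$ is down-finite, for each $p$ there are only finitely many $E_p$-classes, so there are only countably many $E_p$-classes in total; choose one representative point $f_\nu$ from each such class $\nu$ and let $F$ be the resulting countable set. I would include $F$ in every model, giving each $f_\nu$ its own distinct color. The point is that once a color-preserving isomorphism $g$ is forced to fix every $f_\nu$, it is forced to be the identity: for any point $a$ and any $p$, the frame representative of $[a]_{E_p}$ lies in the model and is $E_p$-equivalent to $a$, so $g(a)$ is $E_p$-equivalent to $a$ for every $p$, whence $g(a)=a$ by the defining axiom of $\Phi_P$. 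No appeal to extending $g$ to $\mathrm{Aut}(\mathfrak{M}_P)$ is needed.

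With the frame in hand, I would code $A$ as follows. Fix once and for all a countable dense subset $D_0 \subseteq \mathfrak{M}_P$ (to guarantee the amalgamation axiom of $\Phi_P$, hence density) and fix a Borel injection $\iota$ of $2^\omega$ into the co-countable set of points avoiding those used by $F$ and $D_0$; this avoids any clash in which a single point would be asked to carry two colors. Given $A$, let $M_A$ have universe $D_0 \cup F \cup \iota[A]$, colored so that $D_0$ is blank, each $f_\nu$ keeps its distinguished frame color, and each point of $\iota[A]$ gets one common ``data'' color. This assignment is continuous in a code for $A$, and $M_A$ is a genuine model of $\Phi_P \wedge T$. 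If $A=B$ the identity is an isomorphism; conversely any color-preserving isomorphism $M_A \to M_B$ fixes $F$ (distinct colors) and so, by the rigidity argument, is the identity map, forcing the data-colored point sets $\iota[A]$ and $\iota[B]$ to coincide and hence $A=B$. Since $=^+$ has potential cardinality $\beth_2$ and potential cardinality is monotone under $\leq_B$, this yields $\|T\| \geq \beth_2$.

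The step I expect to require the most care is the bookkeeping that makes the construction literally well defined: arranging $F$, $D_0$, and $\iota[A]$ to be pairwise disjoint as point sets (so the coloring is a genuine function) while keeping $\iota$ Borel and range-controlled, and confirming that the induced tame expansion on each $M_A$ is consistent and elementary in $\mathfrak{M}^+$. The conceptual heart---that a countable colored frame suffices to rigidify every model, using only down-finiteness and the separation axiom of $\Phi_P$---is robust and, pleasantly, uniform across all infinite $P$ and all tame expansions, since the extra tame predicates only add constraints to isomorphisms and never remove the ones the frame already imposes.
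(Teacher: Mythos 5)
Your proposal is correct and follows essentially the same route as the paper: there, one fixes a single countable dense set $M_0 \subseteq \mathfrak{M}_P$, gives each of its points a distinct nonzero color (so any isomorphism fixes $M_0$ pointwise and, since $M_0$ meets every $E_p$-class, is forced by the separation axiom of $\Phi_P$ to be the identity), and codes a countable set $Y$ of new points as the color-$0$ part of $M_0 \cup Y$. Your splitting of the rigidifying set into a blank dense $D_0$ plus a distinctly colored frame $F$ of $E_p$-class representatives is just a bookkeeping variant of the same idea.
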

\begin{proof}
    
    Let $\mathfrak{M}$ be an expansion of $\mathfrak{M}_P$ to a model of $T$. Recall that $T$ is Borel equivalent to $\mathcal{C}(T \land \Phi_P)$, the models of which can be identified with countable dense subsets of $\mathfrak{M}$ equipped with a coloring into $\omega$. Let $M_0 \subseteq \mathfrak{M}$ be a countable dense set. Enumerate $M_0 = \{a_n: n < \omega\}$. Let $X = \mathfrak{M} \backslash M_0$, a standard Borel space. For each countable $Y \subseteq X$ let $(M_Y, c_Y) \models \mathcal{C}(T \land \Phi_P)$ be defined as follows: $M_Y = M_0 \cup Y$, and $c_Y(a_n) = n+1$ and $c_Y(b) = 0$ for all $b \in Y$. Then $Y  \mapsto (M_Y, c_Y)$ is a map from $[X]^{\aleph_0}$ to countable models of $\mathcal{C}(T \land \Phi_P)$. This map is easily seen to be Borel in the codes, and distinct elements of $[X]^{\aleph_0}$ get mapped to nonisomorphic models of $\mathcal{C}(T \land \Phi_P)$.
\end{proof}

Thus for $\overline{M} \in \mathbb{M}$ we have $\|T_{\overline{M}}\| \geq \beth_2$. We conjecture that this value is attained whenever $T_{\overline{M}}$ is not Borel complete.

\section{Complicated Inverse Systems of Abelian Groups}
Our next item of interest is to understand the Scott ranks of models of tame expansions of crosscutting equivalence relations. The key to this is coding inverse systems of abelian groups, which we discuss in this section.

By an inverse system of abelian groups, we mean a system $\mathbf{A} = (A_p, \pi_{pq}: p < q \in P)$ where $P$ is a directed poset and each $A_p$ is an abelian group and $\pi_{pq}: A_q \to A_p$ are commuting homomorphisms (we do not require the homomorphisms to be onto). For convenience we require that the $A_p$'s be disjoint.  

We define a rank function $\mbox{rnk}^{\mathbf{A}}: \bigsqcup_p A_p \to \mbox{ON} \cup \{\infty\}$ as follows: suppose $a \in A_p$. Then put $\mbox{rnk}^{\mathbf{A}}(a) \geq \alpha$ if and only if for each $q > p$ and for each $\beta < \alpha$ there is $b \in A_q$ with $\mbox{rnk}^{\mathbf{A}}(b) \geq \beta$ and $\pi_{pq}(b) = a$. If $P$ is countable, then for $a \in A_p$ we have $\mbox{rnk}^{\mathbf{A}}(a) = \infty$ if and only if there is some $\mathbf{a} \in \ilim(\mathbf{A})$ with $\mathbf{a}(p) = a$; but if $P$ is uncountable then the forward direction can fail. Let $\mbox{rnk}(\mathbf{A})$ denote the least ordinal $\alpha$ such that for all $p \in P$ and all $a \in A_p$, if $\mbox{rnk}^{\mathbf{A}}(a) < \infty$ then $\mbox{rnk}^{\mathbf{A}}(a) < \alpha$.

Suppose $I_*$ is an infinite index set and $C_i: i \in I_*$ are abelian groups. Then a $\overline{C}$-system is an inverse system of groups $\mathbf{A} = (A_I, \pi_{IJ}: I \subseteq J  \in \mathbf{I})$ where $\mathbf{I}$ is a suborder of $(\mathcal{P}(I_*), \subseteq)$ and for each $I \in \mathbf{I}$, $A_I$ is a subgroup of $\prod_{i \in I} C_i$ and each $\pi_{IJ}$ is the natural restriction map (so we will usually omit it).

\begin{lemma}
    Suppose $C$ is a nonzero cyclic group and $I_*$ is an infinite index set; let $C_i = C$ for each $i \in I_*$. Suppose $\alpha_* < (2^{\aleph_0})^+$. Then there exists a $\overline{C}$-system $\mathbf{A} = (A_I: I \in \mathbf{I})$ with $\mbox{rnk}(\mathbf{A})> \alpha_*$. 
\end{lemma}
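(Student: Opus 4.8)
The plan is to reduce the statement to producing a single finite-rank element of large rank, and then to build such a configuration by recursion on $\alpha_*$, realizing each stage as honest subgroups of products $\prod_{i\in I}C$ with the forced restriction maps.

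\textbf{Reduction.} By definition $\mathrm{rnk}(\mathbf{A})>\alpha_*$ holds exactly when there is some $p\in\mathbf{I}$ and $a\in A_p$ with $\mathrm{rnk}^{\mathbf{A}}(a)\ge\alpha_*$ and $\mathrm{rnk}^{\mathbf{A}}(a)<\infty$. So it suffices, for each $\alpha<(2^{\aleph_0})^+$, to construct a $\overline{C}$-system carrying a distinguished element $a_\alpha$ over a base index set whose rank is an \emph{ordinal} $\ge\alpha$. The two requirements on $a_\alpha$ pull in opposite directions: its rank must be large, so it must be liftable many times through every larger index set, yet it must be finite, i.e. $a_\alpha$ must admit \emph{no thread} — no element of $\ilim(\mathbf{A})$ restricting to it. Controlling both simultaneously is the heart of the argument.

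\textbf{Where the room comes from.} Since $\alpha_*<(2^{\aleph_0})^+$ we have $|\alpha_*|\le 2^{\aleph_0}$. Even when $I_*$ is merely countable, a partial element of a $\overline{C}$-system is a partial function $I\to C$, and $C^{I_*}$ has $2^{\aleph_0}$ completions; moreover the subsets of $I_*$ containing a fixed set with infinite complement form an upward-directed family of size $2^{\aleph_0}$. Thus the complexity must be carried by the \emph{functions} and by a rich directed family $\mathbf{I}$, not by a large coordinate set, and the natural invariant to track is an ordinal rank on the tree of partial functions extending $a_\alpha$. I would build, by recursion on $\alpha$, a $\overline{C}$-system over a directed $\mathbf{I}_\alpha\subseteq\mathcal{P}(I_*)$ with distinguished $a_\alpha$ of rank $\ge\alpha$, together with an explicit ordinal labelling $\rho_\alpha$ of all its elements which I prove dominates $\mathrm{rnk}^{\mathbf{A}}$; it is this labelling that certifies ranks remain finite, and in particular that $a_\alpha$ has no thread. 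At successor stages I attach, over fresh coordinate blocks, several lifts of $a_\alpha$ each carrying a copy of the rank-$\alpha$ system, so that over every admissible index set $a_{\alpha+1}$ has a lift of rank $\ge\alpha$; at limit stages $\alpha=\sup_n\alpha_n$ I amalgamate systems of ranks $\alpha_n$ at a common base on disjoint coordinate blocks, so that $a_\alpha$ inherits rank $\ge\alpha_n$ for all $n$. Throughout, the family must be closed under the unions needed to stay directed, and the subgroups chosen so that the restriction maps, which are \emph{forced} to be coordinate projections, realize exactly the intended liftings.

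\textbf{Main obstacle.} The crux is the tension between directedness and the no-thread requirement. Because $\mathbf{I}$ is upward directed, the adversary computing $\mathrm{rnk}^{\mathbf{A}}(a_\alpha)$ may present an index set $J$ meeting \emph{all} attached copies at once, so a naive amalgamation either fails to supply a high-rank lift over such $J$, or, worse, glues the partial lifts into a genuine thread — forcing rank $\infty$, which is useless here (recall that for uncountable $\mathbf{I}$ even rank $\infty$ need not come from a thread, so I cannot rely on that correspondence and must instead bound rank directly). Resolving this is where I expect to spend the effort: arrange the coordinate blocks and defining subgroups by a diagonalization against potential threads — exploiting almost-disjointness of the blocks so that any candidate element of $\ilim(\mathbf{A})$ over $a_\alpha$ is blocked at some finite stage — while still guaranteeing, over every $J\in\mathbf{I}$, a lift of the prescribed lower rank. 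The verification then splits into the two inductions foreshadowed above: $\mathrm{rnk}^{\mathbf{A}}(a_\alpha)\ge\alpha$ from the construction, and $\mathrm{rnk}^{\mathbf{A}}\le\rho_\alpha<\infty$ from the labelling, the latter subsuming the absence of a thread and hence the finiteness of the rank.
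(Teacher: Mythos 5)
Your reduction and your diagnosis of the tension are both correct, and you have correctly identified the two tools the paper also uses: almost-disjoint coordinate blocks and a direct ordinal labelling that dominates $\mbox{rnk}^{\mathbf{A}}$ (rather than arguing via non-existence of threads, which indeed is not sufficient when $\mathbf{I}$ is uncountable). But the proposal stops exactly where the proof has to begin: you never specify the subgroups $A_I$, and you explicitly defer the one mechanism that makes both rank bounds hold at once. That mechanism, in the paper, is a conservation law. One fixes a well-founded tree $T$ of size at most continuum with $\mbox{rnk}^T(0^T)>\omega\cdot\alpha_*$, assigns to each node $t$ an infinite set $J_t\subseteq I_*$ (almost disjoint over $t$) further split into almost-disjoint blocks $J_{ts}$ for $s$ ranging over $t$ and its immediate successors, indexes $\mathbf{I}$ by finite downward-closed subtrees $u$ via $I_u=\bigcup_{t\in u}J_t$, and defines $A_{I_u}$ to consist of functions that are eventually constant on each block $J_{ts}$ with eventual values $\sigma_f(s)$ satisfying $\sum_{s\in\mbox{succ}^+(t)}\sigma_f(s)=0$ for every $t\in u$. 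This single algebraic condition does both jobs: any element carrying nonzero ``charge'' at a node $s$ must, upon extending to $u\cup\{s\}$, push nonzero charge to some immediate successor of $s$, so its rank is bounded by $\mbox{rnk}^T(s)$ (your upper labelling $\rho_\alpha$); conversely, an element whose nonzero charges all sit at nodes of tree rank $\ge\omega\cdot\alpha$ can always be extended by routing the compensating charge to a successor of high tree rank, giving $\mbox{rnk}^{\mathbf{A}}\ge\alpha$. Without an explicit device of this kind your plan is not yet a proof.

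A second concrete problem: your limit stage handles only $\alpha=\sup_n\alpha_n$ over a countable sequence, but $\alpha_*$ ranges up to $(2^{\aleph_0})^+$, so you must pass through limits of uncountable cofinality; there you would need to amalgamate uncountably many systems over a countable $I_*$, and ``disjoint coordinate blocks'' are unavailable. The almost-disjoint families you mention can substitute, but then the amalgamation and the no-thread bookkeeping become nontrivial and must be argued. The tree-based construction avoids this recursion entirely, since a single well-founded tree of size continuum already realizes any rank below $(2^{\aleph_0})^+$, which is why the paper does not induct on $\alpha_*$ at all.
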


\begin{proof}

Let $T$ be a well-founded tree of size at most continuum, with $\mbox{rnk}^T(0^T)>\omega \cdot \alpha_*$. Here $\mbox{rnk}^T: T \to \mbox{ON}$ is the usual foundation rank, so $\mbox{rnk}^T(s) \geq \alpha$ if and only if for all $\beta < \alpha$ there is $t > s$ with $\mbox{rnk}^T(t) \geq \beta$. 

Let $U$ be the set of nonempty finite downward-closed $u \subset T$. For each $t \in T$ let $\mbox{succ}^+(t)$ denote $t$ along with all immediate successors of $T$. For each $u \in U$ let $\mbox{succ}^+(u) = \bigcup_{t \in u} \mbox{succ}^+(t)$. 

Recall that two sets are almost disjoint if they have finite intersection. There exist continuum-sized almost disjoint families of subsets of any given infinite set.

Let $(J_t: t \in T)$ be almost disjoint infinite subsets of $I_*$. For each $t \in T$ choose $(J_{ts}: s \in \mbox{succ}^+(t))$ almost disjoint infinite subsets of $J_t$.

 For each $u \in U$ let $I_u := \bigcup_{t \in u} J_t$, a subset of $I_*$.  Then $I_u \subseteq I_v$ for $u \subseteq v$, and for $u \not \subseteq v$ we have $I_u \backslash I_v$ is infinite, since it contains all but finitely many elements of $J_{tt}$ where $t \in u \backslash v$. 
 
 Let $\mathbf{I} = \{I_u: u \in U\}$, a suborder of $(\mathcal{P}(I_*), \subseteq)$ isomorphic to $(U, \subseteq)$.

Say that $f: X \to Y$ is eventually constant of value $c$ if for all but finitely many $x \in X$, $f(x) = c$. For each $u \in U$, let $A_{I_u} \leq C^{I_u}$ be the set of all functions $f$ satisfying the following:

\begin{itemize}
    \item For each $t \in u$ and for each $s \in \mbox{succ}^+(t)$, $f \restriction_{J_{ts}}$ is eventually constant;
    \item Define $\sigma_f: \mbox{succ}^+(u) \to C$ as follows. Let $\sigma_f(0^T)$ be the eventual value of $f \restriction_{J_{0^T0^T}}$; for all other $s \in \mbox{succ}^+(u)$ let $\sigma_f(s)$ be the eventual value of $f \restriction_{J_{s^-s}}$ where $s^-$ is the immediate predecessor of $s$;
    \item For all $t \in u$, $\sum_{s \in \mbox{succ}^+(t)} \sigma_f(s) = 0$ (in particular only finitely many terms are nonzero).
\end{itemize}

Each $A_{I_u}$ is a subgroup of $C^{I_u}$, and for $u \subseteq v$ the restriction of $f \in A_{I_v}$ to $I_u$ is in $A_{I_u}$. Thus we have a $\overline{C}$-system $\mathbf{A}$. It remains to show $\mbox{rnk}(\mathbf{A}) > \alpha_*$.

It is convenient to reindex $A_u = A_{I_u}$ for $u \in U$. It is easily checked that for all $u \subseteq v \in U$ and for all $f \in A_{v}$, writing $g := f \restriction_{I_u}$, we have that $\sigma_g = \sigma_f \restriction_{\mbox{succ}^+(u)}$. 

For $\alpha$ an ordinal, say that $f \in A_{u}$ is $\alpha$-strong if for all $s \in \mbox{succ}^+(u)$, if $\sigma_f(s) \not= 0$ then $\mbox{rnk}^T(s) \geq \alpha$. 

\noindent \textbf{Claim 1.} For all $f \in A_{u}$, and for all ordinals $\alpha$, if $\mbox{rnk}^{\mathbf{A}}(f) \geq \alpha$ then $f$ is $\alpha$-strong. 

Proof: By induction on $\alpha$. Suppose $f \in A_{u}$ is not $\alpha$-strong; we show $\mbox{rnk}^{\mathbf{A}}(a) < \alpha$.  Pick $s \in \mbox{succ}^+(u)$ with $\sigma_f(s) \not= 0$ and $\mbox{rnk}^T(s) < \alpha$. Write $\beta = \mbox{rnk}^T(s)$ and write $v = u \cup \{s\}$. For any extension $g$ of $f$ to $A_{v}$, we must have $\sigma_g(t) \not= 0$ for some immediate successor $t$ of $s$, hence necessarily $g$ is not $\beta$-strong, hence by the inductive hypothesis $\mbox{rnk}^{\mathbf{A}}(g) < \beta$. Hence $\mbox{rnk}^{\mathbf{A}}(f) < \alpha$.

\noindent \textbf{Claim 2.} Suppose $u \subsetneq v$ are in $U$ and $v= u \cup \{s\}$ for some $s \in T$. Suppose $f \in A_{u}$ is $\alpha+1$-strong. Then there is $g \in A_{v}$ extending $f$ such that $g$ is $\alpha$-strong.

Proof: We have $I_v = I_u \cup J_s$, and $J_s \cap I_u$ is finite.

If $\sigma_f(s) = 0$ then let $g \in A_{v}$ be the extension of $f$ with $g(i) = 0$ for all $i \in I_v \backslash I_u$.

So suppose $\sigma_f(s) \not= 0$. Then $\mbox{rnk}^T(s) > \alpha$ so we can find some immediate successor $t$ of $s$ with $\mbox{rnk}^T(t) \geq \alpha$. Let $g \in A_v$ be the extension of $f$ with $g(i) = -\sigma_f(s)$ for all $i \in J_{st} \backslash I_u$, and $g(i) = 0$ for all $i \in I_v \backslash (J_{st} \cup I_u)$.

\noindent \textbf{Claim 3.} Suppose $f \in A_u$ is $\omega \cdot \alpha$-strong. Then $\mbox{rnk}^{\mathbf{A}}(f) \geq \alpha$.

Proof: follows from Claim 2.

Now we finish. Since $\mbox{rnk}^T(0^T) > \omega \cdot \alpha_*$, we can find an immediate successor $s$ of $0^T$ with $\mbox{rnk}^T(s) \geq \omega \cdot \alpha_*$. Let $b$ be a nonzero element of $C$. Reasoning as in Claim 2, we can find $f \in A_{\{0^T\}}$ with $\sigma_f(0^T) = a$ and $\sigma_f(s) = -a$ and $\sigma_f(t) =0 $ for all other immediate successors $t$ of $0^T$. Then $f$ is $\omega \cdot \alpha_*$-strong, so $\mbox{rnk}^{\mathbf{A}}(f) \geq \alpha_*$; but also $f$ is not $\mbox{rnk}^T(s)+1$-strong, so $\mbox{rnk}^{\mathbf{A}}(f) < \infty$, as desired.
\end{proof}

\begin{theorem}
    Suppose $I_*$ is a countably infinite index set and $C_i$ is a nonzero abelian group for each $i \in I_*$. Suppose $\alpha_*< (2^{\aleph_0})^+$. Then there is a $\overline{C}$-system $\mathbf{A} = (A_I: I \in \mathbf{I})$ with $\mbox{rnk}({\mathbf{A}}) > \alpha_*$. 
\end{theorem}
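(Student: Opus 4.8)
The plan is to reduce the general statement to the cyclic case established in the preceding lemma. Since $I_*$ is countable there are only countably many groups among the $C_i$, and a pigeonhole argument yields, after passing to an infinite subset $I_*' \subseteq I_*$, one of two alternatives. Discarding the coordinates outside $I_*'$ is harmless: a $\overline{C}$-system may be indexed by any $\mathbf{I} \subseteq \mathcal{P}(I_*')$, and if $C_i' \leq C_i$ then any subgroup of $\prod_i C_i'$ is a fortiori a subgroup of $\prod_i C_i$. So it suffices to build a high-rank system supported on $I_*'$, working inside convenient cyclic subgroups.

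Case (a): a single cyclic group $C$ embeds into $C_i$ for every $i \in I_*'$. This occurs when infinitely many $C_i$ have an element of infinite order (take $C = \mathbb{Z}$) or when some prime $p$ is the order of an element in infinitely many $C_i$ (take $C = \mathbb{Z}/p\mathbb{Z}$). Here I would apply the preceding lemma to get a $\overline{C}$-system $\mathbf{A}' = (A_I' : I \in \mathbf{I})$ with $A_I' \leq C^I$ and $\mathrm{rnk}(\mathbf{A}') > \alpha_*$, where $\mathbf{I} \subseteq \mathcal{P}(I_*')$. Fixing embeddings $\iota_i : C \hookrightarrow C_i$ and setting $A_I = (\prod_{i \in I} \iota_i)(A_I')$ produces an inverse system isomorphic to $\mathbf{A}'$, since the coordinatewise embeddings commute with the restriction maps. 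As rank is an invariant of an inverse system up to isomorphism, $\mathrm{rnk}(\mathbf{A}) = \mathrm{rnk}(\mathbf{A}') > \alpha_*$.

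Case (b): no single cyclic group works. By the pigeonhole analysis this is the complementary situation where cofinitely many $C_i$ are torsion with the realized prime orders each occurring only finitely often; refining $I_*'$ once more, I may choose nonzero $c_i \in C_i$ of finite order $n_i$ with $n_i \to \infty$. Now the lemma does not apply directly, since no cyclic group embeds into cofinitely many $C_i$. Instead I would re-run the lemma's construction verbatim, but with the charges $\sigma_f(s)$ valued in the single group $\mathbb{Z}$ rather than in the coordinate groups: for $f \in \prod_i \langle c_i \rangle$ declare $f \restriction_{J_{ts}}$ admissible with charge $m \in \mathbb{Z}$ if $f(i) = m c_i$ for all but finitely many $i \in J_{ts}$, and impose the same balance condition $\sum_{s \in \mathrm{succ}^+(t)} \sigma_f(s) = 0$ in $\mathbb{Z}$. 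The crucial point is that because $n_i \to \infty$ along $J_{ts}$, distinct integers $m \neq m'$ satisfy $m c_i \neq m' c_i$ for all but finitely many $i$, so the charge is well defined and insensitive to finite modifications of $f$; this is exactly what lets the charges attached to the almost-disjoint sibling index sets be compared and summed in a common group.

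With this setup, Claims 1--3 of the preceding proof and the final witness computation transcribe word for word: the admissible $f$ form a subgroup of $\prod_i \langle c_i \rangle$ because the charge map is $\mathbb{Z}$-linear; an $(\alpha+1)$-strong $f$ extends across one new node to an $\alpha$-strong $g$ by placing the balancing charge $-\sigma_f(s)$, realized as $f(i) = -\sigma_f(s)\,c_i$ on the relevant $J_{st}$; and the root witness with $\sigma_f(0^T) = 1$ and $\sigma_f(s) = -1$ has rank $\geq \alpha_*$ while failing to be $(\mathrm{rnk}^T(s)+1)$-strong, hence remains of finite rank. The main obstacle is precisely this case (b): one must check that replacing genuine group-valued charges by $\mathbb{Z}$-coefficients preserves both well-definedness — which is exactly why the orders are driven to infinity — and, most importantly, the \emph{finiteness} of the rank of the witness, so that it genuinely contributes to $\mathrm{rnk}(\mathbf{A})$ rather than escaping to $\infty$ under the looser extension possibilities. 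Everything else is a routine copy of the cyclic argument.
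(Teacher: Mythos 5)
Your proposal is correct, and it reaches the theorem by a genuinely different route in the main case. The first reduction (pass to cyclic subgroups, then split according to whether a single nonzero cyclic group embeds into infinitely many $C_i$) matches the paper's opening move, and your case (a) is exactly the paper's ``$k_i$ constant on an infinite set'' case. The divergence is in the unbounded-order case. The paper keeps the cyclic lemma as a black box: it partitions the index set into infinitely many blocks $K_n$ on each of which the orders are unbounded, observes that the diagonal generator of $\prod_{i\in K_n}C_i$ has infinite order so that each block product contains a copy $D_n\cong\mathbb{Z}$, applies the lemma to the constant family $(\mathbb{Z}:n<\omega)$ at the block level, and then unravels the blocks; the rank is preserved because the resulting system is isomorphic as an inverse system to the one the lemma produced. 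You instead re-run the lemma's construction on the original index set, replacing the $C$-valued ``eventual value'' charges by $\mathbb{Z}$-valued charges interpreted coordinatewise via $m\mapsto mc_i$, with well-definedness secured precisely by $n_i\to\infty$ along each $J_{ts}$. That is the one genuinely new idea your route needs, and you identify and justify it correctly; Claims 1--3 and the witness computation do transcribe verbatim since they only use that the charge group is abelian and that the relevant charges are nonzero ($\pm 1$ in $\mathbb{Z}$). The trade-off: the paper's blocking trick yields a clean reduction with nothing to re-verify, while your version avoids the blocking at the cost of re-checking the lemma's proof with the modified charge mechanism; both are sound.
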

\begin{proof}

Each $C_i$ contains a nonzero cyclic group, so it suffices to consider the case where each $C_i$ is cyclic. Let $k_i = |C_i|$, so $2 \leq k_i \leq \omega$. If $k_i$ is constant on an infinite set then apply the preceding Lemma, so suppose this is not the case.

    Choose infinite disjoint sets $K_n: n <\omega$ from $I_*$ such that $k_i$ is unbounded below $\omega$ on each $K_n$. For $n < \omega$ let $D_n$ be an infinite cylic subgroup of $\prod_{i \in K_n} C_i$, for instance let $D_n$ be the cyclic group generated by $(g_i: i \in K_n)$ where $g_i$ generates $C_i$. So each $D_n \cong \mathbb{Z}$.
    
    By the preceding lemma, let $\mathbf{B} = (B_J: J \in \mathbf{J})$ be a $(D_n: n < \omega)$-system with $\mbox{rnk}(\mathbf{B}) > \alpha_*$. For each $J \in \mathbf{J}$ let $I_J = \bigcup_{n \in J} K_n$. Let $\mathbf{I} = \{I_J: J \in \mathbf{J}\}$.

    Finally, for each $I \in \mathbf{I}$, say $I = I_J$ for $J \in \mathbf{J}$, let $A_{I}$ be the set of all $a \in \prod_{i \in I} C_i$ such that for each $n \in J$, $a \restriction_{K_n} \in D_n$ and $(a \restriction_{K_n}: n \in J) \in B_J$. Then $\mathbf{A} = (A_I: I \in \mathbf{I})$ is as desired.
\end{proof}
\section{Scott Ranks of Tame Expansions of Crosscutting Equivalence Relations}

Let $\overline{M} = (M_n: n < \omega) \in \mathbb{M}$, so each $M_n$ is a finite relational structure of size at least two and the languages of the $M_n$'s are disjoint. We aim to classify when $\cong_{T_{\overline{M}}}$ is Borel, i.e. when $\mbox{sr}(T_{\overline{M}}) < \omega_1$.

For each $n$ let $G_n = \mbox{Aut}(M_n)$; so $G_n$ acts on $M_n$, but perhaps not transitively. The action of $G_n$ on $M_n$ is called free if for all $g \in G_n$, if $g$ fixes some point of $M_n$ then $g$ is the identity.

Let $I_*$ be the set of all $n < \omega$ such that the action of $G_n$ on $M_n$ is not free.

\begin{theorem}
$\cong_{T_{\overline{M}}}$ is Borel if and only if $I_*$ is finite. In fact, if $I_*$ is infinite then $T_{\overline{M}}$ has models of Scott rank arbitrarily large below $(2^{\aleph_0})^+$, so $\mbox{sr}(T_{\overline{M}}) = (2^{\aleph_0})^+$.
\end{theorem}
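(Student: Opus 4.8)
The statement has two directions, and I would treat them separately.

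\emph{Finite $I_*$ implies Borel.} The plan is to show that every countable model of $\mathcal{C}(\Phi_{\overline M})$ admits a finite base and then invoke Lemma~\ref{SRLemma8}; since $T_{\overline M}$ is Borel equivalent to $\mathcal{C}(\Phi_{\overline M})$, this gives that $\cong_{T_{\overline M}}$ is Borel. The key observation is that freeness of the $G_n$-action on $M_n$ is exactly the statement that some (hence any) single point of $M_n$ is a base for $M_n$: since $M_n$ is finite, two elements have the same first-order type over a point $c$ iff some automorphism fixing $c$ carries one to the other, and freeness forbids a nontrivial such automorphism. Given a countable $M\models\Phi_{\overline M}$, realized as a dense submodel of $\prod_n M_n$, a type over a finite $B\subseteq M$ is computed coordinatewise, so $B$ is a base for $M$ as soon as, at each coordinate $n$, the set $\{c(n):c\in B\}$ is a base for $M_n$. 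For $n\notin I_*$ any nonempty such set works; for the finitely many $n\in I_*$ it suffices that these coordinates exhaust $M_n$. Using the realization axiom of $T_P$ (all $E_n$-classes are realized, and for an antichain any finite partial specification of coordinates is consistent) one finds finitely many $c^1,\dots,c^s\in M$ whose coordinates at each $n\in I_*$ enumerate $M_n$; then $B=\{c^1,\dots,c^s\}$ is a finite base. Adding a coloring only refines types, so the same $B$ works for models of $\mathcal{C}(\Phi_{\overline M})$.

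\emph{Infinite $I_*$ implies unbounded Scott rank.} The upper bound $\mbox{sr}(T_{\overline M})\le (2^{\aleph_0})^+$ is general, since every model is $\equiv_{\infty\omega}$ to one of size $\le 2^{\aleph_0}$ (Lemma~\ref{SRLemma2}). So it suffices to produce, for each prescribed $\alpha_*<(2^{\aleph_0})^+$, a model of $\Phi_{\overline M}$ of Scott rank $>\alpha_*$; since every model of $\Phi_{\overline M}$ is a model of $T_{\overline M}$, this simultaneously gives $\mbox{sr}(T_{\overline M})=(2^{\aleph_0})^+$ and, as $(2^{\aleph_0})^+>\omega_1$, non-Borelness via Lemma~\ref{SRLemma1}. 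For each $n\in I_*$ the action is not free, so there is $a_n\in M_n$ with nontrivial stabilizer; fixing $1\ne h_n\in\mbox{Stab}_{G_n}(a_n)$ and an orbit $O_n$ of $\langle h_n\rangle$ of size $\ge 2$ on which it acts regularly, we extract a nonzero cyclic group $C_n$ acting freely transitively on $O_n$, every translation of which is realized by an automorphism of $M_n$ fixing $a_n$. These are the twists that will generate back-and-forth ambiguity.

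Now I would apply the theorem of Section~5 on $\overline C$-systems to the sequence $(C_n:n\in I_*)$ to obtain $\mathbf A=(A_I:I\in\mathbf I)$ with $\mbox{rnk}(\mathbf A)$ larger than any prescribed ordinal below $(2^{\aleph_0})^+$. The plan is then to build $M\models\Phi_{\overline M}$, as a dense submodel of $\prod_n M_n$, encoding $\mathbf A$: over a fixed configuration whose $n$-th coordinate is $a_n$ for $n\in I_*$, its elements include points $x_f$ obtained by translating the basepoints $b_n$ by $f(n)\in C_n$ as $f$ ranges over the $A_I$'s, together with a dense background realizing the remaining classes. Since the languages $\mathcal L_n$ are disjoint, $\mbox{Aut}(\prod_n M_n)=\prod_n G_n$ acts coordinatewise, so an automorphism of $M$ is a coordinatewise family of translations preserving $M$, and by construction the coherent global translations preserving $M$ are exactly those lying in $\ilim(\mathbf A)$, i.e. the differences of rank $\infty$. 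The heart of the argument is a correspondence lemma: for $x,y\in M$ over the same configuration, with coordinatewise difference $z\in\prod_n C_n$ supported on some $I\in\mathbf I$, one has $(M,x)\equiv_\alpha (M,y)$ iff $\mbox{rnk}^{\mathbf A}(z)\ge\alpha$, up to a fixed additive shift absorbed by Lemma~\ref{SRLemma6} (for the finitely many parameters naming the configuration) and by enlarging $\alpha_*$. Granting this, since $\mbox{rnk}(\mathbf A)>\alpha_*$ there is $z$ with $\alpha_*\le\mbox{rnk}^{\mathbf A}(z)<\infty$; the associated $x,y$ satisfy $(M,x)\equiv_{\alpha_*}(M,y)$ while $(M,x)\not\equiv_\infty (M,y)$ (as $z\notin\ilim(\mathbf A)$ yields no $M$-preserving global translation), witnessing $\mbox{sr}(M)>\alpha_*$.

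\emph{Main obstacle.} The technical core is the correspondence lemma, whose two directions mirror Section~5: that high back-and-forth rank forces high system rank is an induction parallel to Claim~1 there, while the converse---extending a partial match one coordinate deeper---mirrors Claims~2 and~3, the regular $C_n$-actions supplying the needed witnesses. The bookkeeping must correctly reflect the difference between coordinates on which $x,y$ agree and those on which they differ (this is why the $A_I$ impose eventual constancy on almost-disjoint blocks), must quarantine the rigid coordinates $n\notin I_*$ (which contribute nothing, by freeness), and must verify both that $M$ is genuinely a model of $\Phi_{\overline M}$ and that finite system-rank of $z$ really precludes an $M$-preserving global translation, so that $\equiv_\infty$ indeed fails.
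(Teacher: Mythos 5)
Your proposal follows essentially the same route as the paper: for finite $I_*$ you build a finite base (coordinates exhausting $M_n$ for $n\in I_*$, freeness handling the rest) and invoke Lemma~\ref{SRLemma8}; for infinite $I_*$ you extract cyclic groups $C_n$ from non-free stabilizers, invoke the $\overline{C}$-system theorem of the previous section, encode the system $\mathbf{A}$ into a submodel of $\prod_n M_n$ over a fixed base configuration, and prove a two-directional correspondence between $\equiv_\alpha$ and $\mbox{rnk}^{\mathbf{A}}$ (the paper's Lemmas~\ref{SRTELemma1} and~\ref{SRTELemma2}), absorbing the parameter via Lemma~\ref{SRLemma6}. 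One small caution: the failure of $\equiv_\infty$ should be read off from the correspondence lemma applied at all ordinals (as the paper does), not from the nonexistence of an $M$-preserving automorphism, since $\equiv_\infty$ need not imply the existence of such an automorphism for uncountable models.
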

The rest of this section is a proof. As usual we work with $\mathcal{C}(\Phi_{\overline{M}})$ rather than $T_{\overline{M}}$. Note that $\mbox{sr}(\mathcal{C}(\Phi_{\overline{M}})) \leq (2^{\aleph_0})^+$ because every model of $\mathcal{C}(\Phi_{\overline{M}})$ has size at most continuum.

First suppose $I_*$ is finite. We claim that every $(M, c) \models \mathcal{C}(\Phi_{\overline{M}})$ has a finite base, which suffices by Lemma~\ref{SRLemma8}. It suffices to show that every $M \models \Phi_{\overline{M}}$ has a finite base. Suppose $M$ is given; we can suppose $M \preceq \prod_n M_n$. Choose $\overline{a} \in M^{<\omega}$ nonempty such that for all $n \in I_*$, $\{a_i(n): i < |\overline{a}|\}$ covers all of $M_n$. This is clearly possible. We show $\overline{a}$ is a base for $M$. Suppose $a, b$ are two elements of $M$ with $\mbox{tp}^M(a/\overline{a}) = \mbox{tp}(b/\overline{a})$. We show $a = b$; it suffices to show $a(n) = b(n)$ for all $n< \omega$. It is clear that $a(n) = b(n)$ for all $n \in I_*$; so fix $n \not \in I_*$ and we show $a(n) = b(n)$. We have $\mbox{tp}^{M_n}(a(n)/a_0(n)) = \mbox{tp}^{M_n}(b(n)/a_0(n))$, so there is $g \in G_n$ with $g(a_0(n), a(n)) = (a_0(n), b(n))$, and since the action of $G_n$ on $M_n$ is free this implies $a(n) = b(n)$ as desired.

The other direction aims to apply the theorem from the preceding section. We suppose $I_*$ is infinite. Let $\alpha_*< (2^{\aleph_0})^+$; we produce a model of $\mathcal{C}(\Phi_{\overline{M}})$ of Scott rank at least $\alpha_*$. Actually we won't need the coloring; i.e. we produce a model of $\Phi_{\overline{M}}$ of Scott rank at least $\alpha_*$.

For each $n \in I_*$ pick $o_n \in M_n$ and $g_n \in G_n$ such that $g_n(o_n) = o_n$ and $g_n$ is not the identity; we can arrange that $g_n$ is of prime order $p_n$. Let $d_n \in M_n$ be such that $g_n(d_n) \not= d_n$; necessarily then $g_n^k(d_n) \not= d_n$ for all $k < p_n$. For each $n \in I$ let $C_n$ be the cyclic subgroup of $G_n$ generated by $g_n$; we write $C_n$ additively.
 
For each $n \not \in I$ let $o_n \in M_n$ be arbitrary. Let $\overline{o} \in \prod_n M_n$ be the sequence $(o_n: n < \omega)$. Given $f \in \prod_n M_n$ let $\mbox{supp}(f) = \{n < \omega: f(n) \not= o_n\}$. 

By the preceding theorem, let $\mathbf{A} = (A_I: I \in \mathbf{I})$ be a $(C_n:n \in I_*)$-system with $\mbox{rnk}(\mathbf{A}) > \alpha_*$. By padding we can suppose each $I \in \mathbf{I}$ is infinite and $\bigcup \mathbf{I} = I_*$. 

% \begin{lemma}
%     For every $b \in B_p$, $\mbox{rnk}^{\mathbf{B}}(b) = \mbox{rnk}^{\mathbf{A}}(\tau_p(b))$.
% \end{lemma}
% \begin{proof}
%     To show $\geq$ is a straightforward induction. So it is enough to show by induction on $\alpha$ that $\mbox{rnk}^{\mathbf{A}}(\tau_p(b)) \geq \alpha$ implies $\mbox{rnk}^{\mathbf{B}}(b) \geq \alpha$. Let $b$ be given with $\mbox{rnk}^{\mathbf{A}}(\tau_p(b)) \geq \alpha$, and let $\beta < \alpha$ and let $q > p$. We need to find $b' \in B_q$ extending $b$ of rank at least $\beta$. Choose $a' \in A_q$ with $\mbox{rnk}^{\mathbf{A}}(a') \geq \beta$ and $a' \restriction_{p} = \tau_p(b)$. Choose $b' \in B_q$ with $\tau_q(b') = a'$. Then $\tau_p(b' \restriction_{p} - b)=0$, so $b' \restriction_{p} - b := k \in K_p$. Choose $k' \in K_q$ with $k' \restriction_{p} = k$. Then $(b'-k') \restriction_{p} = b$ and $\tau_p(b'-k') = a'$, so by the inductive hypothesis $\mbox{rnk}^{\mathbf{B}}(b'-k') \geq \beta$, so $\mbox{rnk}^{\mathbf{B}}(b) \geq \alpha$.
% \end{proof}

For each $a \in A_I$ let $F_I(a) \in \mbox{Aut}(\prod_{n < \omega} M_n) = \prod_{n < \omega} G_n$ be defined by $F_I(a) \restriction_{M_n} = $ the identity map unless $n \in I$, in which case $F_I(a)\restriction_{M_n} = a(n) \in C_n$. Clearly $F_I$ is a homomorphism of groups. We write $F_I(a)(f) = a+f$.

For each $I \in \mathbf{I}$ let $f_I \in \prod_n M_n$ be defined by $f_I(n) = o_n$ for $n \not \in I$, and $f_I(n) = d_n$ for $n \in I$. So $\mbox{supp}(f_I) = I$. Note that $A_I$ acts regularly on $A_I + f_I$, since for all $n \in I$ and all $g \in C_n$ nonzero, $g(d_n) \not= d_n$. 

Let $M$ be the submodel of $\prod_n M_n$ consisting of all $f \in \prod_n M_n$ of finite support, along with the union of all $A_I + f_I$ for $I \in \mathbf{I}$. Let $N = M(\overline{o})$, i.e. add $\overline{o}$ as a constant. By Lemma~\ref{SRLemma6} it suffices to show $\mbox{sr}(N) \geq \alpha_*$.

\begin{lemma}\label{SRTELemma1}
For each ordinal, and for each $a \in A_I$, if $(N, f_I) \equiv_{\omega+\alpha} (N, a + f_I)$ then $\mbox{rnk}^{\mathbf{A}}(a) \geq \alpha$.
\end{lemma}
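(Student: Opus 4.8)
The plan is to prove the statement by transfinite induction on $\alpha$, uniformly over all $I \in \mathbf{I}$ and all $a \in A_I$; that is, I prove that for every $I \in \mathbf{I}$ and $a \in A_I$, if $(N, f_I) \equiv_{\omega + \alpha} (N, a + f_I)$ then $\mbox{rnk}^{\mathbf{A}}(a) \geq \alpha$. The case $\alpha = 0$ is vacuous. The limit case is immediate from the inductive hypothesis together with the shape of the rank definition: for $\alpha$ limit, $\mbox{rnk}^{\mathbf{A}}(a) \geq \alpha$ holds iff $\mbox{rnk}^{\mathbf{A}}(a) \geq \beta$ for all $\beta < \alpha$, and $(N, f_I) \equiv_{\omega + \alpha} (N, a + f_I)$ implies $(N, f_I) \equiv_{\omega + \beta} (N, a + f_I)$ for each such $\beta$. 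So the content is entirely in the successor step.

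For $\alpha = \gamma + 1$, suppose $(N, f_I) \equiv_{\omega + \gamma + 1} (N, a + f_I)$. By the definition of the rank it suffices to show that for every $J \in \mathbf{I}$ with $I \subsetneq J$ there is $b \in A_J$ with $b \restriction I = a$ and $\mbox{rnk}^{\mathbf{A}}(b) \geq \gamma$. Fix such a $J$. Since $f_J = 0 + f_J \in A_J + f_J \subseteq N$, I would play $f_J$ against the equivalence $(N, f_I) \equiv_{(\omega + \gamma) + 1} (N, a + f_I)$ to obtain a response $y \in N$ with $(N, f_I, f_J) \equiv_{\omega + \gamma} (N, a + f_I, y)$.

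The heart of the argument is to read off the shape of $y$ from quantifier-free data, which is contained in $\equiv_{\omega + \gamma}$. Because $N$ carries the constant $\overline{o}$, for each $n$ the formula $E_n(x, \overline{o})$ is quantifier-free, so the support $\{n : \neg E_n(y, \overline{o})\}$ is determined by $\mbox{qftp}(y / \overline{o})$; matching against $f_J$ forces $\mbox{supp}(y) = \mbox{supp}(f_J) = J$, which is infinite. Since the infinite-support elements of $N$ are exactly $\bigcup_{I' \in \mathbf{I}} (A_{I'} + f_{I'})$, and an element of $A_{I'} + f_{I'}$ has support exactly $I'$, it follows that $y \in A_J + f_J$, say $y = b + f_J$ with $b \in A_J$. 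To pin down $b \restriction I = a$, I compare $E_n(f_I, f_J)$ with $E_n(a + f_I, y)$ for $n \in I$: both $f_I$ and $f_J$ take the value $d_n$ on such $n$, so $E_n(f_I, f_J)$ holds, whence $(a + f_I)(n) = (b + f_J)(n)$, i.e. $a(n) \cdot d_n = b(n) \cdot d_n$; since $g_n^k(d_n) \neq d_n$ for $0 < k < p_n$, the only element of $C_n$ fixing $d_n$ is the identity, so $a(n) = b(n)$. As $n \in I$ was arbitrary, $b \restriction I = a$.

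Finally I would drop the parameter: removing $f_I$ (respectively $a + f_I$) from an $\equiv_{\omega + \gamma}$-equivalence preserves it, since a winning strategy for the longer tuple restricts to one for the shorter; thus $(N, f_J) \equiv_{\omega + \gamma} (N, b + f_J)$, and the inductive hypothesis applied to $J$ and $b$ yields $\mbox{rnk}^{\mathbf{A}}(b) \geq \gamma$, completing the step. I expect the main obstacle to be exactly the structural analysis of $y$: one must verify carefully that quantifier-free data over the named point $\overline{o}$ genuinely recovers the support of $y$, hence confines $y$ to the single coset $A_J + f_J$, and that the freeness of the $C_n$-action on the orbit of $d_n$ is precisely what upgrades the coordinatewise equalities to $b \restriction I = a$. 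I note that the $\omega$ in $\omega + \alpha$ is slack for this direction, as only quantifier-free information plus one round is used at each successor stage, but I keep it to align with the converse estimate.
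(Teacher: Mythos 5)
Your proof is correct and follows essentially the same route as the paper's: play $f_J$ against one round of the back-and-forth, use the named constant $\overline{o}$ to read off that the response has support exactly $J$ and hence lies in $A_J+f_J$, use the regularity of the $C_n$-action on the orbit of $d_n$ to conclude the witness restricts to $a$ on $I$, and close with the inductive hypothesis. The only cosmetic difference is that you organize the induction by successor/limit cases while the paper quantifies over $\beta<\alpha$ and $J$ directly inside a single inductive step; both are fine.
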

\begin{proof}
    By induction on $\alpha$. Suppose we have verified this for all $\beta < \alpha$ and $a \in A_I$ is given with $(N, f_I) \equiv_{\alpha} (N, a + f_I)$. We need to show $\mbox{rnk}^{\mathbf{A}}(a) \geq \alpha$. Suppose $\beta < \alpha$ and $J \supseteq I$ are given. We have that there is some $x$ with $(N, f_I, f_J) \equiv_{\beta} (N, a+f_I,x)$. Since $f_J$ has support $J$, so must $x$ (since we added $\overline{o}$ as a constant), so $x \in A_J + f_J$, write it as $a' + f_J$. By the inductive hypothesis, $\mbox{rnk}^{\mathbf{A}}(a') \geq \beta$.

Recall that for $n \in \omega$ $E_n \in \mathcal{L}(\prod_n M_n)$ is the equivalence relation with $E_n(f, g)$ if and only if $f(n) = g(n)$. We have $M \models E_n(f_I, f_J)$ for all $n \in I$, hence $M \models E_n(a+f_I, a'+f_J)$ for all $n \in I$, which means that $a' \restriction_{I} + f_I = a + f_I$. Since $A_I$ acts regularly on $A_I + f_I$, this implies $a' \restriction_I = a$, and since $\mbox{rnk}^{\mathbf{A}}(a') \geq \beta$ this witnesses $\mbox{rnk}^{\mathbf{A}}(a) \geq \alpha$.
\end{proof}

For each $I \in \mathbf{I}$ let $M_I$ be the set of all $e \in M$ such that $\mbox{supp}(e) \cap I_* \subseteq I$. So $A_I+ M_I = M_I$, and for $I \subseteq J$ and $b \in B_J$ and $e \in M_I$, $b+ e = (b \restriction_{I}) + e$.

\begin{lemma}\label{SRTELemma2}
Suppose $I \in \mathbf{I}$ and $\overline{e} = (e_i: i < i_*)$ is a tuple from $M_I$. Let $a \in A_I$ have $\mbox{rnk}^{\mathbf{A}}(a) \geq \alpha$. Then  
$(N, (a+e_i: i < i_*)) \equiv_{\alpha} (N, \overline{e})$.
\end{lemma}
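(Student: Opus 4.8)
The plan is to prove $(N,\overline{e}) \equiv_\alpha (N, a+\overline{e})$ by induction on $\alpha$, uniformly over all $I \in \mathbf{I}$, all $a \in A_I$ with $\mbox{rnk}^{\mathbf{A}}(a) \geq \alpha$, and all tuples $\overline{e}$ from $M_I$. I will use freely that $\mathbf{I}$ is directed under $\subseteq$ with $\bigcup \mathbf{I} = I_*$ (this holds for the $\mathbf{I}$ produced above, after the padding used to arrange $\bigcup \mathbf{I} = I_*$), together with the two already-noted facts $A_J + M_J = M_J$ and $b + e = (b\restriction_I) + e$ for $I \subseteq J$, $b \in A_J$, $e \in M_I$. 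These let me, for any $x \in N$ and any $I \in \mathbf{I}$, find $J \in \mathbf{I}$ with $I \subseteq J$ and $x \in M_J$: the $I_*$-support of $x$ is either finite or equals some $J' \in \mathbf{I}$, and directedness absorbs it together with $I$ into a single $J \in \mathbf{I}$.

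For the base case $\alpha = 0$, note that $F_I(a)$ is an automorphism of $\prod_n M_n$ fixing every $o_n$ (each $a(n) \in C_n$ fixes $o_n$), hence an automorphism of $\prod_n M_n$ with the constants $\overline{o}$ named; it therefore preserves quantifier-free types, and since it carries $\overline{e}$ into $M_I = A_I + M_I \subseteq N$ we get $\mbox{qftp}^N(\overline{e}) = \mbox{qftp}^N(a+\overline{e})$, i.e. $(N,\overline{e}) \equiv_0 (N, a + \overline{e})$. The limit case is immediate, since $\mbox{rnk}^{\mathbf{A}}(a) \geq \alpha \geq \gamma$ for each $\gamma < \alpha$, so the inductive hypothesis gives $(N,\overline{e}) \equiv_\gamma (N, a + \overline{e})$ for all such $\gamma$.

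The successor case $\alpha = \beta + 1$ is the heart of the argument. For the forth step, given $x \in N$, choose $J \in \mathbf{I}$ with $I \subseteq J$ and $x \in M_J$ as above. If $J = I$ put $a' = a$; otherwise $J \supsetneq I$, and since $\mbox{rnk}^{\mathbf{A}}(a) \geq \beta + 1$ the definition of rank supplies $a' \in A_J$ with $\pi_{IJ}(a') = a$ (that is, $a' \restriction_I = a$) and $\mbox{rnk}^{\mathbf{A}}(a') \geq \beta$. Set $y = a' + x$, which lies in $A_J + M_J = M_J \subseteq N$. By the coherence identity, $a' + e_i = (a'\restriction_I) + e_i = a + e_i$ for every $e_i$ in $\overline{e}$, so $(a + \overline{e})\,y = (a' + \overline{e})(a' + x) = a' + (\overline{e}x)$; as $\overline{e}x$ is a tuple from $M_J$ and $\mbox{rnk}^{\mathbf{A}}(a') \geq \beta$, the inductive hypothesis yields $(N, \overline{e}x) \equiv_\beta (N, a' + (\overline{e}x)) = (N, (a+\overline{e})\,y)$, as needed. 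The back step is symmetric: given $y \in N$, choose $J$ and $a'$ as above with $y \in M_J$, and take $x = -a' + y = F_J(-a')(y) \in M_J$, so that $y = a' + x$ and the identical computation reduces to the inductive hypothesis at level $\beta$.

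I expect the only real obstacle to be the requirement that the back-and-forth witness actually lie in $N$: one cannot simply translate $x$ by $a$, because $a + x$ need not belong to $M$ once $\mbox{supp}(x)$ leaves $I$. This is exactly where the rank hypothesis is spent---we must enlarge $I$ to a $J \in \mathbf{I}$ that captures the support of $x$ and lift $a$ to an $a' \in A_J$ of rank at least $\beta$, which is precisely the content of $\mbox{rnk}^{\mathbf{A}}(a) \geq \beta + 1$. The coherence identity $a' + \overline{e} = a + \overline{e}$ then guarantees the lift leaves the already-placed tuple $\overline{e}$ undisturbed, so the entire configuration collapses to one application of the inductive hypothesis.
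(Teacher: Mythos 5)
Your proof is correct and follows essentially the same route as the paper's: induction on $\alpha$, with the successor step handled by absorbing the support of the new element into some $J \supseteq I$ in $\mathbf{I}$ (using directedness and $\bigcup\mathbf{I} = I_*$), lifting $a$ to $a' \in A_J$ of rank at least $\beta$, and using $a' + e_i = a + e_i$ to leave the old tuple undisturbed. The paper streamlines the two back-and-forth directions into one via the symmetry $-a + (a + e_i) = e_i$, exactly as you note at the end.
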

\begin{proof}
By induction on $\alpha$. It suffices to show this for $M$ rather than $N$ because each element of $A_I$ fixes $\overline{o}$. The case $\alpha = 0$ follows from the fact that $a$ acts on $\prod_n M_n$ by automorphisms. The case of $\alpha$ limit is immediate. So suppose $\alpha$ is a successor ordinal, say $\alpha = \beta+1$. Suppose $\mbox{rnk}^{A}(a) \geq \alpha$ and suppose $\overline{e}$ is a tuple from $M_I$. The situation is symmetric in $\overline{e}$ and $(a+ e_i: i < i_*)$ (since $(-a+(a+e_i): i < i_*) = \overline{e}$) so it suffice so thow that for all $\beta < \alpha$ and for all $e \in M$ there is $e' \in M$ with  $((M, c), \overline{e}e) \equiv_{\beta} ((M, c), (a + e_i: i < i_*)e')$. Choose $J \supseteq I$ with $e \in M_J$ (possible using that $\mathbf{I}$ is directed and that $\bigcup \mathbf{I} = I_*$). Choose $a' \in A_J$ with $a' $ extending $a$ and $\mbox{rnk}^{\mathbf{A}}(a') \geq \beta$. Then $(a' + e_i: i < i_*) = (a+ e_i: i < i_*)$, so we can let $e' = a' + e$ and use the indutive hypothesis.
\end{proof}

With these lemmas in hand, we now finish. By choice of $\mathbf{A}$ we can choose some $a \in A_I$ with $\alpha_* \leq \mbox{rnk}^{\mathbf{A}}(a) < \infty$. By Lemma~\ref{SRTELemma1}, we have $(N, f_I) \not \equiv_{\infty} (N, a+f_I)$, but by Lemma~\ref{SRTELemma2}, we have $(N, f_I) \equiv_{\alpha_*} (N, a+f_I)$.

\section{Scott Ranks of Families of Finite Equivalence Relations}

\begin{definition}
    Suppose $(P, \leq, \delta) \in \mathbb{P}$. Then say that $(P, \leq, \delta)$ is nearly binary crosscutting if all but finitely many $p \in P$ are maximal and have $\delta(p)  = 2$. Equivalently, $P$ is nearly binary crosscutting if there is a finite downward-closed $Q \subseteq P$ such that $P \backslash Q$ is an antichain on which $\delta$ is identically $2$. 
\end{definition}

We aim to prove Theorem~\ref{PThm}, i.e. show that for $(P, \leq, \delta) \in \mathbb{P}$, $\cong_{T_P}$ is Borel if and only if $P$ is nearly binary crosscutting. As usual it suffices to work with $\mathcal{C}(\Phi_P)$ rather than $T_P$. To begin:

\begin{lemma}
    Suppose $(P, \leq, \delta) \in \mathbb{P}$ is nearly binary crosscutting. Then $\cong_{\mathcal{C}(\Phi_P)}$ is Borel.
\end{lemma}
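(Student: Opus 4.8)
The plan is to show that every countable model of $\mathcal{C}(\Phi_P)$ admits a finite base, which by Lemma~\ref{SRLemma8} immediately yields that $\cong_{\mathcal{C}(\Phi_P)}$ is Borel (indeed with $\mbox{sr} \leq \omega \cdot 3$). Since the coloring does not affect whether distinct elements have distinct types, it suffices to produce a finite base for an arbitrary $M \models \Phi_P$, which we may take to be an elementary (dense) submodel of $\mathfrak{M}_P$. Write $P = Q \sqcup (P \setminus Q)$ where $Q$ is finite downward-closed, $P \setminus Q$ is an antichain, and $\delta \equiv 2$ on $P \setminus Q$. The intuition is that the finite, possibly complicated part $Q$ contributes only finitely many $E_Q$-classes, while each binary crosscutting relation $E_p$ for $p \in P \setminus Q$ splits every class into exactly two, and a single well-chosen element of each class suffices to pin down which of the two halves any other element lies in.

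First I would handle the finite part $Q$. Since $Q$ is downward-closed and finite, the relation $E_Q(x,y) := \bigwedge_{q \in Q} E_q(x,y)$ is an equivalence relation with only finitely many classes (at most $\prod_{q \in Q} \delta(q)$), by the axioms of $T_P$. I would pick representatives $\overline{a} = (a_j : j < k)$, one from each nonempty $E_Q$-class in $M$; this is a finite tuple. The claim to verify is that $\overline{a}$ is a base for $M$, i.e. that any $b, b' \in M$ with $\mbox{tp}^M(b/\overline{a}) = \mbox{tp}^M(b'/\overline{a})$ are equal. Since the language consists of the $E_p$ together with the coloring predicates, and $\Phi_P$ asserts that $b = b'$ exactly when $E_p(b,b')$ holds for all $p \in P$, it suffices to show $E_p(b,b')$ for every $p \in P$ under the type-equality hypothesis.

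The key step is the verification for $p \in P \setminus Q$. Having the same type over $\overline{a}$, in particular $b$ and $b'$ lie in the same $E_Q$-class, say the one represented by $a_j$; since $p$ is $\leq$-incomparable to everything in $Q$ (as $P \setminus Q$ is an antichain and $Q$ is downward-closed, so $p \not\leq q$ for $q \in Q$, and $q \leq p$ would force $q \in Q$ but then $p$ would dominate a $Q$-element — I would check $P_{\leq p} \cap Q$ carefully, using down-finiteness and that $p$ is maximal). The relation $E_p$ restricted to the $E_Q$-class of $a_j$ splits it into exactly $\delta(p) = 2$ classes. The quantifier-free type of $b$ over $\overline{a}$ records whether $E_p(b, a_j)$ holds, and likewise for $b'$; equality of types forces $b$ and $b'$ to fall in the same one of the two $E_p$-halves, i.e. $E_p(b,b')$. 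Combined with $E_q(b,b')$ for all $q \in Q$ (from the same $E_Q$-class), this gives $E_p(b,b')$ for all $p \in P$, hence $b = b'$.

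The main obstacle I anticipate is the bookkeeping around which elements of $\overline{a}$ certify the $E_p$-splitting for a given maximal $p$: one must ensure the chosen representatives genuinely meet every relevant $E_p$-class, and here I would lean on the precise interaction between $Q$ and the antichain $P \setminus Q$. A clean way to avoid enlarging $\overline{a}$ beyond finiteness is to observe that each $p \in P \setminus Q$ refines only $E_Q$ (since $p$ dominates no element of $Q$, the relation $E_{<p}$ used in the splitting axiom coincides, on the relevant class, with $E_Q$ or with the full space), so that knowing the $E_Q$-class of $b$ plus a single bit per relevant $p$ determines $b$ entirely — but only finitely many $p$ are relevant to distinguishing $b$ from any fixed $a_j$ once the $E_Q$-class is fixed, because membership in an $E_p$-half is already encoded in $\mbox{tp}(b/\overline{a})$. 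I would make this rigorous by checking that the full type of $b$ over $\overline{a}$ determines the function $p \mapsto [\text{the } E_p\text{-half of } b]$ for all $p \in P \setminus Q$ simultaneously, which is exactly the data needed to recover $b \in \mathfrak{M}_P$.
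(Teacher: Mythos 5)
Your proposal is correct and follows essentially the same route as the paper: pick one representative per $E_Q$-class where $E_Q=\bigwedge_{q\in Q}E_q$, handle $p\in Q$ by transitivity through the representative, and for $p\in P\setminus Q$ use that $E_{<p}$ only involves relations from $Q$ (so the $E_Q$-class sits inside a single $E_{<p}$-class) together with $\delta(p)=2$ to force $E_p(b,b')$. One small correction: $p\in P\setminus Q$ need not be incomparable to everything in $Q$ (elements of $Q$ may lie strictly below $p$); what matters is only that no element of $P\setminus Q$ lies strictly below $p$, so that $E_Q$ refines $E_{<p}$ -- which is exactly what your argument actually uses.
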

\begin{proof}
Suppose $(M, c) \models \mathcal{C}(\Phi_P)$; it suffies by Lemma~\ref{SRLemma8} to show that $(M, c)$ has a finite base. Let $Q \subseteq P$ be finite and downward closed such that $P \backslash Q$ is an antichain on which $\delta$ is identically $2$. 

Let $E_Q$ denote $\bigwedge_{q \in Q} E_q$; this is a quantifier-free definable equivalence relation on $M$ with finitely many classes.

Choose $\overline{a} = (a_i: i < i_*) \in M^{<\omega}$ such that $\{[a_i]_{E_{Q}}: i < i_*\}$ exhausts $[M]_{E_Q}$. It suffices to show $\overline{a}$ is a base for $M$. Suppose $a, b \in M$ have $\mbox{tp}^M(\overline{a} a) = \mbox{tp}^M(\overline{a} b)$; it suffices to show $E_p(a, b)$ for all $p \in P$. Choose $i < i_*$ with $E_Q(a, a_i)$ and so $E_Q(b, a_i)$.

Now if $p \in Q$ then $a E_p a_i E_p b$ and we are done, so suppose $p \in P \backslash Q$ and suppose towards a contraction $\lnot E_p(a, b)$. Then we must have $\lnot E_p(a, a_i)$ and $\lnot E_p(b, a_i)$. But then $\{a, b, a_i\}$ are all $E_{<p}$-equivalent and all $E_p$-inequivalent, contradicting $\delta(p) =2$.
\end{proof}

We aim to show the converse. We reduce it to checking a finite list of examples. Towards this, we want the following Lemmas.

\begin{lemma}\label{ReductionP}
Suppose $(P, \leq, \delta) \in \mathbb{P}$ and $Q \subseteq P$ ($Q$ need not be downward closed). Then $\mathcal{C}(\Phi_Q) \leq_B \mathcal{C}(\Phi_P)$.
\end{lemma}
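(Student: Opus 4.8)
The plan is to exhibit an explicit Borel map realizing each colored model of $\Phi_Q$ as a colored model of $\Phi_P$. Write $R = P \setminus Q$ and fix a default value $0 \in \delta(p)$ for each $p \in R$. Since $P$ is down-finite so is $(Q, \leq\restriction_Q)$, hence $(Q, \leq\restriction_Q, \delta\restriction_Q) \in \mathbb{P}$ and $\mathcal{C}(\Phi_Q)$ makes sense. Recalling that every model of $\Phi_Q$ embeds as a dense submodel of $\mathfrak{M}_Q$, I will work with a colored dense submodel $M \subseteq \mathfrak{M}_Q$ equipped with $c : M \to \omega$; the passage to and from universe $\omega$ is routine. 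Let $\pi : \mathfrak{M}_P \to \mathfrak{M}_Q$ be restriction to $Q$ and let $\iota : \mathfrak{M}_Q \to \mathfrak{M}_P$ send $f$ to the function agreeing with $f$ on $Q$ and taking value $0$ on $R$; a direct computation shows $E_q^{\mathfrak{M}_P}(\iota f, \iota g) \leftrightarrow E_q^{\mathfrak{M}_Q}(f,g)$ for all $q \in Q$, regardless of order relations crossing between $Q$ and $R$.

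Given $(M, c)$, define $N = \{ f \in \mathfrak{M}_P : f\restriction_Q \in M \text{ and } \{p \in R : f(p) \neq 0\} \text{ is finite}\}$, a countable subset of $\mathfrak{M}_P$, and color it by $c'(f) = c(f\restriction_Q) + 1$ when $f\restriction_R$ is identically $0$ (equivalently $f \in \iota(M)$) and $c'(f) = 0$ otherwise. First I would check $N \models \Phi_P$: distinct elements of $\mathfrak{M}_P$ are separated by some $E_p$, and given a finite downward-closed $S \subseteq P$ and $h \in \prod_{p\in S}\delta(p)$, density of $M$ applied to the downward-closed set $S \cap Q$ lets me extend $h\restriction_{S\cap Q}$ to an element of $M$ and set the remaining coordinates to agree with $h$ on $S \cap R$ and to $0$ elsewhere, producing $f \in N$ with $f\restriction_S = h$; thus $N$ is dense, hence a model of $\Phi_P$. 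The map $(M,c) \mapsto (N, c')$ is Borel in the codes.

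For the forward direction, a color-preserving isomorphism $\psi : (N, c') \to (N_1, c'_1)$ must carry $\{f : c'(f) \geq 1\} = \iota(M)$ bijectively onto $\{f : c'_1(f) \geq 1\} = \iota(M_1)$, respecting the $\mathcal{L}_P$-structure and the color values; transporting through $\iota$ and using that $E_q^{\mathfrak{M}_P}$ restricted to $\iota(M)$ is $E_q^{\mathfrak{M}_Q}$ for $q \in Q$, this yields an $\mathcal{L}_Q$-isomorphism $M \to M_1$ preserving $c$. For the backward direction, given an $\mathcal{L}_Q$-isomorphism $\phi : (M, c) \to (M_1, c_1)$, I define $\Psi : N \to N_1$ by $\Psi(f)\restriction_Q = \phi(f\restriction_Q)$ and $\Psi(f)\restriction_R = f\restriction_R$; this is a color-preserving bijection since the $R$-support is copied verbatim, and it remains to verify that it preserves each $E_p$.

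The main obstacle is precisely this last verification, and it hinges on one observation. Coordinatewise agreement at a single $q \in Q$ is \emph{not} $\mathcal{L}_Q$-invariant, so $\Psi$ cannot be analyzed coordinate-by-coordinate on the $Q$-block. However, for $p \in P$ the relation $E_p^{\mathfrak{M}_P}(f,g)$ asserts that $f,g$ agree on the finite downward-closed set $P_{\leq p}$; agreement on its $R$-part is copied verbatim by $\Psi$, while agreement on its $Q$-part $D := Q \cap P_{\leq p}$ equals $\bigwedge_{q \in \max(D)} E_q^{\mathfrak{M}_Q}(f\restriction_Q, g\restriction_Q)$ — a \emph{finite} conjunction of $E_q$'s, since $D$ is downward closed and finite. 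As $\phi$ preserves each such $E_q$, it preserves this conjunction, and combining the two parts gives $E_p^{\mathfrak{M}_P}(\Psi f, \Psi g) \leftrightarrow E_p^{\mathfrak{M}_P}(f,g)$ for every $p \in P$, including $p \in R$, where $D$ need not be the down-set of any single element of $Q$. This shows $\Psi$ is an isomorphism and completes both directions; I expect the down-finiteness of $P$, which guarantees $\max(D)$ is finite, to be the one hypothesis doing real work here.
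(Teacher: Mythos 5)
Your reduction $(M,c)\mapsto(N,c')$ is exactly the map the paper uses (same ambient model $\mathfrak{M}_P$, same membership condition, same coloring), and your verification is correct, so this is essentially the paper's proof. The only presentational difference is in transporting an isomorphism $(M,c)\cong(M_1,c_1)$: the paper first extends it to an automorphism of $\mathfrak{M}_Q$ and pastes with the identity on $P\setminus Q$, whereas you build $\Psi$ directly from $\phi$ and justify preservation of each $E_p$ via the finite conjunction over $\max(Q\cap P_{\leq p})$ --- a point the paper leaves as ``easily seen.''
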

\begin{proof}
Given $(M, c) \models \mathcal{C}(\Phi_Q)$ with $M \preceq \mathfrak{M}_Q$ let $(M_*, c_*) \models \Phi_P$ be defined via: $M_* \preceq \mathbb{M}_P$, and given $f \in \mathbb{M}_P$, $f \in M_*$ if and only if $f \restriction_Q \in M$ and $f \restriction_{P \backslash Q}$ is nonzero finitely often. Let $c_*(f) = 0$ if $f \restriction_{P \backslash Q}$ is not identically $0$, and otherwise let $c_*(f) = c(f \restriction_Q)+1$. We claim that $(M, c) \mapsto (M_*, c_*)$ is the desired reduction.

First suppose $(M, c) \cong (M', c')$ are models of $\mathcal{C}(\Phi_Q)$ with $M, M' \preceq \mathfrak{M}_Q$, we show $(M_*, c_*) {\cong} (M'_*, c'_*)$. Let $\sigma \in \mbox{Aut}(\mathfrak{M}_P)$ take $(M, c)$ to $(M', c')$ (see Fact 4.2(3) of \cite{BorelComplexityP}). Let $\tau \in \mbox{Aut}(\mathfrak{M}_P)$ be defined by sending $f \in \mathfrak{M}_P$ to $\sigma(f \restriction_Q) \cup f\restriction_{P \backslash Q}$. Then $\tau$ is easily seen to be an isomorphism taking $(M_*, c_*)$ to $(M'_*, c'_*)$.

To finish it suffices to show we can recover $(M, c)/\cong$ from $(M_*, c_*)$. But working in $M_*$, let $M'$ be the set of all $f$ such that $c_*(f) \not= 0$, and define $c'(a) = c_*(a) -1$ for each $a \in M'$. Then viewing $M'$ as an $\mathcal{L}_Q$-structure we get that $(M', c') \cong (M, c)$ via the map sending $a$ to $a \restriction_Q$, as desired.
\end{proof}

\begin{lemma}\label{ReductionDelta}
Suppose $(P, \leq, \delta) \in \mathbb{P}$ and $\delta' \leq \delta$ pointwise, i.e. $\delta'(p) \leq \delta(p)$ for all $p \in P$. Then $\mathcal{C}(\Phi(P, \leq, \delta')) \leq_B \mathcal{C}(\Phi(P, \leq, \delta))$.
\end{lemma}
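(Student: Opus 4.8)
The plan is to follow the template of the proof of Lemma~\ref{ReductionP}: from each $(M,c)\models\mathcal{C}(\Phi(P,\leq,\delta'))$ I would produce $(M_*,c_*)\models\mathcal{C}(\Phi(P,\leq,\delta))$ containing a color-marked copy of $M$ as a dense $\delta'$-substructure, recover $(M,c)/\!\cong$ from the marking, and check that the assignment respects isomorphism. First I fix an identification $\delta'(p)\subseteq\delta(p)$ for every $p$, so that $\mathfrak{M}_{P,\delta'}$ becomes the substructure $T'$ of $\mathfrak{M}_{P,\delta}$ of ``good'' elements $f$ (those with $f(p)\in\delta'(p)$ for all $p$); call $p$ \emph{bad for $f$} if $f(p)\in\delta(p)\setminus\delta'(p)$. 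I may assume $M\preceq\mathfrak{M}_{P,\delta'}$, i.e.\ $M\subseteq T'$.

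Next I would build $M_*$ as $M$ together with a countable dense family of ``new'' (bad) elements, coloring $f\in M$ by $c(f)+1$ and every new element by $0$, arranged so that $M_*\cap T'=M$. The essential design feature is that each new element is \emph{tied to $M$ only below its bad coordinates and is generic above them}: for $f$ with finite bad set $B$ let $G_f=\{p:(\le p)\cap B=\emptyset\}$ be its (downward-closed) good part, and require $f\restriction G_f$ to agree with some $m\in M$ while $f\restriction(P\setminus G_f)$ ranges over a fixed, $M$-\emph{independent} countable dense reservoir of continuations placed above the branch point. It is routine that $M_*$ is countable and dense in $\mathfrak{M}_{P,\delta}$ (good finite configurations are realized in $M$, configurations using a bad value by the new elements), hence a model of $\Phi(P,\le,\delta)$ with $M_*\cap T'=M$, and that $(M,c)\mapsto(M_*,c_*)$ is Borel in the codes.

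Recovery and the reverse implication are then immediate, exactly as in Lemma~\ref{ReductionP}: the elements of nonzero color in $(M_*,c_*)$ are precisely $M$, and, read off with $c_*-1$, they form a model of $\mathcal{C}(\Phi(P,\le,\delta'))$ isomorphic to $(M,c)$ (the induced $E_p$-structure on a subset of $T'$ is just the $\delta'$-structure). In particular any isomorphism $(M_*,c_*)\cong(M_{1*},c_{1*})$ restricts to $(M,c)\cong(M_1,c_1)$.

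For the forward direction, given $(M,c)\cong(M_1,c_1)$ I would extend it by Fact 4.2(3) of \cite{BorelComplexityP} to $\sigma\in\mathrm{Aut}(\mathfrak{M}_{P,\delta'})$ and define a map $(M_*,c_*)\to(M_{1*},c_{1*})$ acting as $\sigma$ on $M$ and by the canonical identification on each attached reservoir; the only real content is that this preserves every $E_p$. This is where the construction pays off: for a new element $f$ and $s\in G_f$ one has $\{\le s\}\subseteq G_f$, so the coherent value of $\sigma$ at $s$ depends only on $f\restriction G_f$ (and in particular is independent of which $m$ is chosen), while the part of $f$ above its bad coordinates is generic and matched canonically on both sides, so the two kinds of data never interact and $E_p$ is respected. \textbf{This last point is the main obstacle, and it forces the whole design.} The naive completion in which a new element is required to agree with a fixed $m\in M$ off a finite bad set—the direct analogue of the $P\setminus Q$ trick of Lemma~\ref{ReductionP}—\emph{fails}: a good coordinate lying \emph{above} a bad one would then be tied to $m$, and the coherent action of $\sigma$ mixes in the value of $m$ at the bad coordinate, data the element no longer records, so the induced map is neither well defined nor $E_p$-preserving. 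Keeping the new elements free above their bad coordinates is exactly what removes this dependence.
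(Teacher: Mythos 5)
Your proof is correct and is essentially the paper's own argument: your good part $G_f$ is exactly the paper's $I(f)=\{p: f(q)<\delta'(q) \text{ for all } q\leq p\}$, the requirement that $f\restriction_{G_f}$ extend to an element of $M$ while $f$ is chosen from a canonical $M$-independent reservoir off $G_f$ (in the paper, ``nonzero only finitely often'') is precisely the paper's definition of $M_*$, and your forward direction is the paper's map $\tau(f)=\sigma(f\restriction_{I(f)})\cup f\restriction_{P\setminus I(f)}$. Your closing explanation of why the naive completion (tying a new element to a fixed $m\in M$ off a finite bad set) fails is a correct diagnosis of the subtlety the paper's construction is built around, though the paper does not spell it out.
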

\begin{proof}
Our normal notation of $T_P, \Phi_P, \mathfrak{M}_P$ is ambiguous here, so we abbreviate $\Phi = \Phi(P, \leq, \delta), \mathfrak{M} = \mathfrak{M}(P, \leq, \delta)$, and $\Phi' = \Phi(P, \leq, \delta')$, $\mathfrak{M}' = \mathfrak{M}(P, \leq, \delta')$.

Let $\mathcal{I}$ be the set of all finite intersections $\bigcap_{i < n} P_{\not \geq p_i}$ for $\overline{p} \in P^{<\omega}$. We construe $P \in \mathcal{I}$ as being the empty intersection. Given $f \in \mathfrak{M}$ let $I(f) \subseteq P$ be the set of all $p \in P$ such that $f(q) < \delta'(q)$ for all $q \leq p$.

Given $(M, c) \models \mathcal{C}(\Phi')$ with $M \preceq \mathfrak{M}'$, let $(M_*, c_*) \models \Phi(P, \delta)$ with $M_* \preceq \mathfrak{M}$ be defined as follows: suppose $f \in \mathfrak{M}$. Then $f \in M_*$ if and only if $I(f) \in \mathcal{I}$ and $f \restriction_{I(f)}$ can be extended to an element of $M$, and $f \restriction_{P \backslash I(f)}$ is nonzero only finitely often. Define $c_*(f) = c(f) + 1$ if $f \in M$ (i.e. if $I(f) = P$), and $c_*(f) = 0$ otherwise. We claim that $(M, c) \mapsto (M_*, c_*)$ is the desired Borel reduction.

First suppose $(M, c) \cong (M', c')$ are models of $\Phi'$; let $\sigma \in \mbox{Aut}(\mathfrak{M}')$ take $(M, c)$ to $(M', c')$. Then define $\tau \in \mbox{Aut}(\mathfrak{M})$ via $\tau(f) = \sigma(f \restriction_{I(f)}) \cup f \restriction_{P \backslash I(f)}$. It is straightforward to check that $\tau$ is an automorphism, i.e. that $\tau$ is a bijection preserving the $E_p$'s, and that $\tau$ takes $(M_*, c_*)$ to $(M'_*, c'_*)$.

To finish it suffices to show that $(M, c)/\cong$ can be recovered from $(M_*, c_*)/\cong$, but this is clear (look at the elements of nonzero color, and subtract one from their colors).
\end{proof}

Now we introduce our benchmark equivalence relations.

\begin{definition}
    Let $(P_0, \leq, \delta_0) \in \mathbb{P}$ be such that $P_0$ is an $\omega$-chain and $\delta_0$ is identically $2$. So $T_{P_0}$ is just REF(bin) from \cite{URL}.

    Let $(P_1, \leq, \delta_1) \in \mathbb{P}$ be such that $P_1$ is an infinite antichain and $\delta_1$ is constantly $3$.

    Let $(P_2, \leq, \delta_2) \in \mathbb{P}$ be such that $P_2 = \{p_n, q_n: n < \omega\}$ where the only relations are $p_n < q_n$ for each $n$, and where $\delta_2$ is identically $2$.

    Let $(P_3, \leq, \delta_3) \in \mathbb{P}$ be such that $P_3 = \{p_n, q_n: n < \omega\}$ where the only relations are $p_n < q_m$ for $n \leq m$, and where $\delta_3$ is identically 2.
\end{definition}

\begin{theorem}
Suppose $(P, \leq, \delta) \in \mathbb{P}$ is not nearly binary crosscutting. Then there is $i < 4$ such that $\mathcal{C}(\Phi_{P_i}) \leq_B \mathcal{C}(\Phi_P)$. 
\end{theorem}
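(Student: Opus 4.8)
The plan is to exhibit, for each way in which $(P,\leq,\delta)$ can fail to be nearly binary crosscutting, a sub-poset $Q\subseteq P$ together with a constant splitting function $\delta'$ (either $\equiv 2$ or $\equiv 3$) so that $(Q,\leq,\delta')$ is isomorphic to one of the benchmarks $(P_i,\leq,\delta_i)$. Since $\delta(p)\geq 2$ for every $p$, and $\delta\geq 3$ on the relevant $Q$ in the single case where we aim at $P_1$, we will always have $\delta'\leq\delta|_Q$ pointwise. Then Lemma~\ref{ReductionDelta} gives $\mathcal{C}(\Phi(Q,\leq,\delta_i))\leq_B \mathcal{C}(\Phi(Q,\leq,\delta|_Q))$ and Lemma~\ref{ReductionP} gives $\mathcal{C}(\Phi(Q,\leq,\delta|_Q))\leq_B \mathcal{C}(\Phi_P)$, whence $\mathcal{C}(\Phi_{P_i})\leq_B\mathcal{C}(\Phi_P)$. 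Thus the whole problem reduces to the purely combinatorial statement that every $(P,\leq,\delta)\in\mathbb{P}$ which is not nearly binary crosscutting contains one of the four order-theoretic configurations $P_0,\dots,P_3$ (with $\delta$ large enough where $P_1$ is targeted) as a sub-poset.

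To prove this combinatorial statement I would use the standard Ramsey fact that every infinite poset contains an infinite chain or an infinite antichain (colour pairs by comparable/incomparable). If $P$ contains an infinite chain $C$, then down-finiteness forces $C$ to have order type $\omega$, so $(C,\leq)\cong P_0$ and we target $P_0$ with $\delta'\equiv 2$. Otherwise $P$ has no infinite chain. Since $P$ is not nearly binary crosscutting, the set $B$ of elements that are non-maximal or satisfy $\delta(p)\geq 3$ is infinite; hence either $\{p:\delta(p)\geq 3\}$ is infinite, or the set of non-maximal elements is infinite. In the first case that set has no infinite chain, so by Ramsey it contains an infinite antichain $Q$, on which $\delta\geq 3$; then $(Q,\leq)\cong P_1$ and we target $P_1$ with $\delta'\equiv 3$.

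The remaining case, where $P$ has no infinite chain but infinitely many non-maximal elements, is where I expect the main difficulty. By Ramsey the non-maximal elements contain an infinite antichain $(a_i)$; choose $b_i>a_i$ for each $i$. Down-finiteness ensures no single element dominates infinitely many of the distinct $a_i$, so after thinning the $b_i$ are distinct, lie outside $(a_i)$, and—again by Ramsey, using the absence of infinite chains—may be taken to form an antichain. The key structural observation is that $a_i>b_j$ is impossible for $i\neq j$: it would give $a_j<b_j<a_i$, contradicting that $(a_i)$ is an antichain. Hence for $i\neq j$ the only options are $a_i<b_j$ or $a_i\perp b_j$. I would then $4$-colour each pair $\{i,j\}$ with $i<j$ by $(\mathbf{1}[a_i<b_j],\ \mathbf{1}[a_j<b_i])$ and pass to an infinite homogeneous set, reindexed as $0,1,2,\dots$. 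Two colours are killed by down-finiteness: if $a_j<b_i$ for all $i<j$ (colours $(0,1)$ and $(1,1)$), then fixing $i=0$ the element $b_0$ would dominate infinitely many distinct $a_j$. The colour $(0,0)$ makes $(\{a_i,b_i\})$ an independent family of comparable pairs, a copy of $P_2$; the colour $(1,0)$ gives $a_i<b_j\iff i\leq j$ with the $a$'s and $b$'s each an antichain, a copy of $P_3$. In both cases we target $P_2$ or $P_3$ with $\delta'\equiv 2$.

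The conceptual crux is precisely this last extraction: choosing the refined four-colour Ramsey partition and recognizing that down-finiteness eliminates the two ``reversed staircase'' colours, leaving exactly the clean independent and staircase patterns. Everything else is bookkeeping: once the sub-poset is pinned down as an isomorphic copy of some $P_i$, the two reduction lemmas deliver $\mathcal{C}(\Phi_{P_i})\leq_B\mathcal{C}(\Phi_P)$ mechanically.
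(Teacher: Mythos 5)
Your proposal is correct and follows essentially the same route as the paper: dispose of the $\omega$-chain case via $P_0$, extract an infinite antichain with $\delta\geq 3$ for $P_1$, and otherwise Ramsey-refine an antichain of non-maximal elements together with chosen upper bounds into a copy of $P_2$ or $P_3$, feeding everything through Lemmas~\ref{ReductionP} and~\ref{ReductionDelta}. The only (harmless) difference is that you kill the ``reversed staircase'' colours by down-finiteness where the paper invokes the absence of an $\omega$-chain, and your four-colour partition makes explicit a couple of degenerate comparabilities that the paper's two-condition homogenization passes over.
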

\begin{proof}
    We can suppose $P$ does not admit an $\omega$-chain, as otherwise $\mathcal{C}(\Phi_{P_0}) \leq_B \mathcal{C}(\Phi_P)$ by Lemmas~\ref{ReductionP}, ~\ref{ReductionDelta}. 

    Since $P$ is not nearly binary crosscutting, there are either infinitely many $p$ with $\delta(p) > 2$, or else there are infinitely many $p \in P$ which are nonmaximal. If there are infinitely many $p \in P$ with $\delta(p) > 2$ then by the nonexistence of an $\omega$-chain, we can find an infinite antichain on which $\delta$ is at least three, and so by Lemmas~\ref{ReductionP}, ~\ref{ReductionDelta} we get $\mathcal{C}(\Phi_{P_1}) \leq_B \mathcal{C}(\Phi_{P})$. So it suffices to consider the case where there is an infinite set $E$ of nonmaximal elements. Enumerate $E = \{p_n: n < \omega\}$. Choose $q_n > p_n$ for each $n$. After applying Ramsey's theorem and using $P$ has no $\omega$-chain, we can suppose that $\{p_n: n< \omega\}$ and $\{q_n: n < \omega\}$ are disjoint antichains.  After applying Ramsey's theorem again we can suppose that each of the following is either true for all $n < m$ or for no $n < m$:

\begin{itemize}
    \item $p_n < q_m$;
    \item $q_n < p_m$.
\end{itemize}
If each $q_n < p_m$ then each $q_n < q_m$ so we get an $\omega$-chain, contrary to hypothesis. If each $p_n < q_m$ then by Lemmas~\ref{ReductionP}, ~\ref{ReductionDelta} we get $\mathcal{C}(\Phi_{P_3}) \leq_B \mathcal{C}(\Phi_P)$; otherwise the same holds for $\mathcal{C}(\Phi_{P_2})$.
\end{proof}

Thus to prove Theorem~\ref{PThm} it suffices to show that $\mathcal{C}(\Phi_{P_i}): i < 4$ all have non-Borel isomorphism relation.

$\mathcal{C}(\Phi_{P_0})$ has non-Borel isomorphism relation by Theorem 5.6 of \cite{URL}. $\mathcal{C}(\Phi_{P_1})$ has non-Borel isomorphism relation by Theorem~\ref{TameExpThm} since the action of $S_3$ on a three-element set is not free. In this case we actually have $\mbox{sr}(\mathcal{C}(\Phi_{P_1})) = (2^{\aleph_0})^+$.

We consider $P_2 = \{p_n: n < \omega\} \cup \{q_n: n < \omega\}$ where the only relations are $p_n \leq q_n$ for each $n$. Let $\overline{M} = (M_n: n < \omega)$ where each $M_n$ is a set of size $4$ equipped with an equivalence relation with two classes, each of size $2$. Then $\mathfrak{M}_{P_2} \cong \prod_n M_n$; in particular they have the same first-order theory, so $T_{P_2}$ has Borel isomorphism relation if and only if $T_{\overline{M}}$ does. But the latter fails, because the action of $\mbox{Aut}(M_n)$ on $M_n$ is not free.

In particular we also have $\mbox{sr}(\mathcal{C}(\Phi_{P_2})) = (2^{\aleph_0})^+$. 

So to classify which $(P, \delta)$ have Borel isomorphism, it is enough to show that $(P_3, \leq, \delta_3)$ has non-Borel isomorphism. This is the subject of the next section.

\section{$(P_3, \leq, \delta_3)$}
\begin{theorem}
    $\mathcal{C}(\Phi(P_3, \leq, \delta_3))$ has non-Borel isomorphism relation. In fact, $\mathcal{U}(\Phi_{P_3})$ has non-Borel isomorphism relation, where $\mathcal{U}$ means we adjoin a single unary predicate.
\end{theorem}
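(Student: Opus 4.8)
The plan is to recast the statement as a Scott rank computation and then to code well-founded trees. By Lemma~\ref{SRLemma1} it suffices to show that the Scott ranks of countable models of $\mathcal{U}(\Phi_{P_3})$ are unbounded below $\omega_1$, i.e.\ to produce, for each $\alpha < \omega_1$, a countable $(M,U) \models \mathcal{U}(\Phi_{P_3})$ with $\mbox{sr}(M,U) > \alpha$. This is exactly the negation of the mechanism behind Lemma~\ref{SRLemma8}: in the nearly-binary-crosscutting (Borel) case every model had a finite base, so the point is to build models \emph{without} a finite base whose back-and-forth complexity climbs toward $\omega_1$. I would keep the construction entirely among countable models (each of Scott rank $<\omega_1$), which is what one expects given $\mbox{sr}(T_{P_3}) = \omega_1$ rather than $(2^{\aleph_0})^+$; thus the argument should be a countable tree-coding in the spirit of $\REF(\bin)=\mathcal{C}(\Phi_{P_0})$, not the continuum-sized inverse-system machinery of Sections 5--6.

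The engine is the self-similarity of $\mathfrak{M}_{P_3}$. Writing $f=(x,y)$ with $x_n = f(p_n)$ and $y_m = f(q_m)$, one has $E_{p_n}(f,g) \iff x_n = x_n(g)$ and $E_{q_m}(f,g) \iff x\restriction(m{+}1) = x(g)\restriction(m{+}1) \wedge y_m = y_m(g)$. Consequently, fixing the first $m{+}1$ $p$-coordinates to a tuple $\bar\epsilon$ and passing to $S_{\bar\epsilon} = \{f : f(p_i)=\epsilon_i,\ i\le m\}$, the relations $E_{p_i},E_{q_i}$ ($i\le m$) become either trivial or free binary splits, while after reindexing $p'_j = p_{m+1+j}$, $q'_j = q_{m+1+j}$ the relations $E_{p'_j},E_{q'_j}$ reproduce the pattern $p'_i < q'_j$ for $i\le j$ exactly. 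Thus $S_{\bar\epsilon}$ carries a definable copy of $\mathfrak{M}_{P_3}$ sitting beside the free binary relations $E_{q_0},\dots,E_{q_m}$, which form a fully symmetric crosscutting cube. The cube contributes nothing the opponent can exploit, whereas the embedded copy of $\mathfrak{M}_{P_3}$ lets me \emph{plant} a smaller gadget below any prescribed $p$-prefix.

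Using this, I would construct gadgets by recursion on $\beta < \omega_1$ (equivalently, along a well-founded tree): a countable pointed configuration $(M_\beta,U_\beta)$ carrying a distinguished pair $a_\beta,b_\beta$ with $(M_\beta,U_\beta,a_\beta)\equiv_\beta (M_\beta,U_\beta,b_\beta)$ but $(M_\beta,U_\beta,a_\beta)\not\equiv_{\beta+1}(M_\beta,U_\beta,b_\beta)$, which already forces $\mbox{sr}(M_\beta,U_\beta)>\beta$; by Lemma~\ref{SRLemma6} the base points may be treated as constants. The gadget for $\beta$ plants, below suitable $p$-prefixes, copies of the gadgets for all $\gamma<\beta$, using the $q$-decorations to record how deep each copy sits and the single unary predicate $U$ to mark the planted base points, with $a_\beta,b_\beta$ differing only in one deeply planted sub-gadget. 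The lower bound $\equiv_\beta$ is then a back-and-forth induction in the style of Lemmas~\ref{SRTELemma1} and~\ref{SRTELemma2}: a play of length $\beta$ commits only finitely many coordinates and so descends into only finitely many planted copies, all of rank $<\beta$, which the inductive hypothesis matches; the failure at $\beta+1$ comes from the opponent forcing resolution of the single copy whose rank approaches $\beta$. Letting $\beta\to\omega_1$ makes the Scott ranks unbounded, and Lemma~\ref{SRLemma1} yields non-Borelness; since no coloring beyond $U$ is used, this is exactly the refinement to a single unary predicate.

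The hard part will be the back-and-forth analysis in the presence of the crosscutting $p$-cube. Because the $p$-coordinates are mutually independent and freely flippable, the bare structure is extremely symmetric, and I must prevent this symmetry from being used to collapse distinct planted gadgets or to \emph{slide} the distinguished difference into a shallower, prematurely detectable location. The entire role of $U$ is to rigidify the placement of gadgets enough that the $q$-decorations can be read off canonically; the delicate and genuinely novel point---where $P_3$ departs from both the crosscutting cases and from $\REF(\bin)$---is verifying that a \emph{single} unary predicate already breaks this gauge symmetry while leaving all the $\equiv_\beta$-matchings intact. Once that is in hand the twin bounds go through uniformly in $\beta$ and the theorem follows.
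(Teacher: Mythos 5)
There is a genuine gap: your proposal is a strategy outline whose central construction is never supplied, and you explicitly defer the one point on which everything hinges (``the delicate and genuinely novel point \dots is verifying that a single unary predicate already breaks this gauge symmetry while leaving all the $\equiv_\beta$-matchings intact''). Nothing in the sketch gives a concrete mechanism for why your planted-gadget models would satisfy $(M_\beta,U_\beta,a_\beta)\equiv_\beta(M_\beta,U_\beta,b_\beta)$ while failing $\equiv_{\beta+1}$; the sentence ``the failure at $\beta+1$ comes from the opponent forcing resolution of the single copy whose rank approaches $\beta$'' restates the goal rather than proving it. The self-similarity observation about fixing $p$-prefixes is correct but is not by itself an engine: the extreme homogeneity of the crosscutting $p$-cube that you worry about is precisely the obstruction, and marking ``base points'' with $U$ gives no visible reason why the marked configuration is rigid enough to carry a well-founded rank yet soft enough to survive $\beta$ rounds of back-and-forth. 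The limit stage is a second unaddressed difficulty: assembling gadgets for all $\gamma<\beta$ inside a single countable model so that the $\equiv_\gamma$-matchings cohere is exactly where the paper deliberately abandons countable models, noting that working with the full uncountable $\mathfrak{M}_{P_3}$ ``greatly simplifies the limit stage''; your insistence on staying countable removes the one simplification available without replacing it.

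For comparison, the paper's actual mechanism is algebraic rather than combinatorial. It realizes $\mathfrak{M}_{P_3}$ on $2^\omega\times 2^\omega$, views $2^\omega$ as an $\mathbb{F}_2$-vector space, and takes the unary predicate to be a set $C$ each of whose fibers $C[f]$ is a coset of a group $G[f]$. The pair of models to be distinguished is $(\mathfrak{M},C)$ versus $(\mathfrak{M},G)$: the group always admits a total coherent section (the zero section), so $\mathrm{rnk}^G(\emptyset)=\infty$, while $C$ is built by induction on $\alpha<\omega_1$ to have coherent-section rank exactly $\alpha$; a rank-to-back-and-forth transfer lemma then gives $(\mathfrak{M},C)\equiv_\alpha(\mathfrak{M},G)$ but $(\mathfrak{M},C)\not\equiv_\infty(\mathfrak{M},G)$. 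The successor step exploits that a union of two cosets of a fixed subgroup of an $\mathbb{F}_2$-space is again a coset, and the limit step is an infinite product over a partition of $\omega$ with an ``eventually $0$'' versus ``eventually $1$'' coset shift. This coset-versus-group distinction is the concrete, globally defined but locally invisible asymmetry that your sketch is missing; without an analogue of it, the twin bounds you assert for your gadgets cannot be verified.
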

The rest of this section is a proof. 

Recall $P_3 = \{p_n, q_n: n < \omega\}$ where the only relations are $p_n \leq q_m$ for all $n \leq m$, and $\delta_3$ is identically two. It is enough to construct, for every $\alpha < \omega$, pairs of (not necessarily countable) models $M, N \models \mathcal{U}(\Phi_{P_3})$ such that $M \equiv_{\alpha \omega} N$ but $M \not \equiv_{\infty \omega} N$. It greatly simplifies the limit stage of the proof to construct uncountable models rather than countable models, even though in theory one can apply downward Lowenheim-Skolem to get back to the countable world. 

We view $2^\omega$ as a group, where $(f+g)(n) =f(n) + g(n)$ mod $2$. Viewed as such, $2^\omega$ is a vector space over the field $\mathbb{F}_2$ with two elements.

Let $\MM$ be the isomorphic copy of $\MM_{P_3}$ with universe $2^\omega \times 2^\omega$, where $(f, g) E_{p_n} (f', g')$ iff $f(n) = f'(n)$, and $(f, g) E_{q_n} (f', g')$ iff $f \restriction_{n+1} = f' \restriction_{n+1}$ and $g(n) = g'(n)$. 

Let $\mathbf{C}$ be the set of all $C \subseteq 2^\omega \times 2^\omega$ such that for all $f \in 2^\omega$, $\{g \in 2^\omega: (f, g) \in C\}$ is a coset of a group. We denote this coset as $C[f]$. 

Say that $A \subseteq \MM$ is coherent if for all $(f, g), (f', g') \in A$, and for all $n$, if $f\restriction_n = f' \restriction_n$ then $g \restriction_n = g' \restriction_n$ (so in particular if $f = f'$ then $g= g'$). 

Suppose $C \in \mathbf{C}$ and $A \subseteq C$ is coherent. Then define $\mbox{rnk}^C(A)$ as follows: $\mbox{rnk}^C(A) \geq \alpha$ if and only if for all $\beta < \alpha$ and for all $f \in 2^\omega$, there is some $g$ such that $A \cup \{(f, g)\}$ is a coherent subset of $C$ of rank at least $\beta$. Clearly $A \subseteq B$ implies $\mbox{rnk}^C(A) \geq \mbox{rnk}^C(B)$.

Mainly we are only interested in $\mbox{rnk}^C(A)$ for $A$ finite.

\begin{lemma}
    Suppose $C \in \mathbf{C}$. Then for all finite coherent sequences $A \subseteq C$, $\mbox{rnk}^C(A)$ is the minimum of $\mbox{rnk}^C(\{(f, g)\})$ over all $(f, g) \in A$. Thus, for all $(f, g) \in C$, $\mbox{rnk}^C(f, g) \geq \alpha$ if and only if for all $f'$ and for all $\beta < \alpha$ there exists $g' \in C[f']$ such that $\{(f, g), (f', g')\}$ is coherent and $\mbox{rnk}^C(f', g') \geq \beta$.
\end{lemma}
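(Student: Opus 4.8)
The plan is to establish the displayed equation $\mbox{rnk}^C(A) = \min_{(f,g) \in A} \mbox{rnk}^C(\{(f,g)\})$ for every nonempty finite coherent $A \subseteq C$, and then read off the ``thus'' clause as a corollary. One inequality is immediate: since $\{(f,g)\} \subseteq A$ for each $(f,g) \in A$, the already-noted monotonicity ($A \subseteq B \Rightarrow \mbox{rnk}^C(A) \geq \mbox{rnk}^C(B)$) gives $\mbox{rnk}^C(\{(f,g)\}) \geq \mbox{rnk}^C(A)$, so $\min_{(f,g)\in A}\mbox{rnk}^C(\{(f,g)\}) \geq \mbox{rnk}^C(A)$. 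The content is therefore the reverse inequality.

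For the reverse inequality I would prove, by induction on $\alpha$, the statement $(\star_\alpha)$: for every nonempty finite coherent $A \subseteq C$, if $\mbox{rnk}^C(\{a\}) \geq \alpha$ for all $a \in A$, then $\mbox{rnk}^C(A) \geq \alpha$. Granting $(\star_\alpha)$ for all $\alpha$, the reverse inequality follows by applying $(\star_m)$ with $m = \min_{a \in A}\mbox{rnk}^C(\{a\})$ (and by applying every $(\star_\alpha)$ when $m = \infty$). The base case $\alpha = 0$ is trivial, and the limit case is immediate from the fact that for limit $\alpha$ one has $\mbox{rnk}^C(A) \geq \alpha$ iff $\mbox{rnk}^C(A) \geq \beta$ for all $\beta < \alpha$, which unwinds directly from the definition.

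The heart of the matter is the successor step $\alpha = \beta+1$, and it rests on a purely combinatorial reduction of the coherence constraint, which I regard as the main obstacle to carry out cleanly. Fix $A$ with all singletons of rank $\geq \beta+1$ and fix $f \in 2^\omega$; it suffices to produce $g$ such that $A \cup \{(f,g)\}$ is coherent, contained in $C$, and has all of its singletons of rank $\geq \beta$, for then $(\star_\beta)$ yields $\mbox{rnk}^C(A \cup \{(f,g)\}) \geq \beta$, and ranging over $f$ this gives $\mbox{rnk}^C(A) \geq \beta+1$. If $f$ already occurs as a first coordinate of some element of $A$, coherence forces $g$ to equal the corresponding second coordinate, so $A \cup \{(f,g)\} = A$ and $(\star_\beta)$ applied to $A$ suffices. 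Otherwise, for each $a = (f_a, g_a) \in A$ let $n_a$ be the first place where $f$ and $f_a$ differ, and let $a_* = (f_*, g_*)$ maximize $n_a$, with $n_* = n_{a_*}$. The key claim is that coherence of $A \cup \{(f,g)\}$ is equivalent to the single constraint $g \restriction_{n_*} = g_* \restriction_{n_*}$, i.e. to coherence of the pair $\{a_*, (f,g)\}$ alone. This is exactly where coherence of $A$ enters: for any $a$, since $f \restriction_{n_a} = f_a \restriction_{n_a}$ and $f \restriction_{n_*} = f_* \restriction_{n_*}$ with $n_a \leq n_*$, we get $f_a \restriction_{n_a} = f_* \restriction_{n_a}$, so coherence of $A$ forces $g_a \restriction_{n_a} = g_* \restriction_{n_a}$, and hence the $a_*$-constraint implies the $a$-constraint. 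Granting the equivalence, apply the definition of $\mbox{rnk}^C(\{a_*\}) \geq \beta+1$ to the singleton $\{a_*\}$ with this $f$ (and $\gamma = \beta$) to obtain $g$ with $\{a_*,(f,g)\}$ coherent, $(f,g) \in C$, and $\mbox{rnk}^C(\{a_*,(f,g)\}) \geq \beta$; the equivalence then makes $A \cup \{(f,g)\}$ coherent, while monotonicity gives $\mbox{rnk}^C(\{(f,g)\}) \geq \beta$, so all singletons of $A\cup\{(f,g)\}$ have rank $\geq \beta$ and the step is complete.

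Finally, the ``thus'' clause is a direct consequence of the equation applied to two-element sets, namely $\mbox{rnk}^C(\{(f,g),(f',g')\}) = \min(\mbox{rnk}^C(f,g),\mbox{rnk}^C(f',g'))$: this converts the defining clause for $\mbox{rnk}^C(f,g) \geq \alpha$, which asks each extension $(f',g')$ to make the pair have rank $\geq \beta$, into the stated condition phrased via the singleton rank of $(f',g')$. The only care needed is a short bootstrap in the ``if'' direction, taking $f' = f$ to first secure $\mbox{rnk}^C(f,g) \geq \beta$ and then feeding this back (together with a witness of rank $\geq \beta$ at an arbitrary $f'$) to reach $\geq \beta+1$. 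I expect the reduction of coherence to the single binding constraint coming from the longest-agreeing element to be the crux; the remainder is bookkeeping with the rank definition.
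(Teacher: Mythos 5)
Your proof is correct: the induction on $\alpha$ with the reduction of the coherence constraint to the single binding condition $g\restriction_{n_*}=g_*\restriction_{n_*}$ coming from the longest-agreeing element of $A$ is sound, and the bootstrap via $f'=f$ in the ``thus'' clause is exactly the point that needs care. The paper dismisses this lemma with ``Straightforward'' and gives no argument, so your write-up is simply a legitimate filling-in of the omitted details rather than a divergent approach.
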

\begin{proof}
    Straightforward.
\end{proof}

\begin{lemma}
    For every ordinal $\alpha$ and for every $n < \omega$ there is a formula $\phi_\alpha(x_i: i < n)$ of $\mathcal{L}_{\infty \omega}$ such that for all $C \in \mathbf{C}$,  and for all $A \in \MM^n$, $(\MM, C) \models \phi_\alpha(A)$ if and only if $A\subseteq C$ is coherent and $\mbox{rnk}^{C}(A) \geq \alpha$.
\end{lemma}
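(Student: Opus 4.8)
The plan is to prove the statement by transfinite induction on $\alpha$, building the formulas $\phi_\alpha$ uniformly in $n$ (i.e.\ simultaneously constructing, for each arity $n$, a formula in the variables $(x_i : i < n)$). The statement couples two conditions — that $A \subseteq C$ is coherent and that $\mbox{rnk}^C(A) \geq \alpha$ — so I would first isolate the assertion ``$A \subseteq C$ is coherent'' as something expressible by a single first-order formula using the predicate for $C$ and the relations $E_{p_n}, E_{q_n}$. Coherence says that whenever two elements agree on the first $n$ coordinates of their first component, they agree on the first $n$ coordinates of their second component; since $E_{p_n}$ tracks $f(n)$ and $E_{q_n}$ tracks $f\restriction_{n+1}$ together with $g(n)$, the relation $f\restriction_n = f'\restriction_n \Rightarrow g\restriction_n = g'\restriction_n$ is a countable conjunction of conditions on the $E_{p_n}, E_{q_n}$, hence an $\mathcal{L}_{\infty\omega}$ (indeed $\mathcal{L}_{\omega_1\omega}$) formula. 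Membership in $C$ is handled by the unary predicate $C$ directly. So the ``coherent $A \subseteq C$'' part is a fixed formula $\psi_n(\overline{x})$ independent of $\alpha$.

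The heart is encoding the rank condition. Here I would use the characterization from the preceding lemma: $\mbox{rnk}^C(A) \geq \alpha$ for finite coherent $A$ equals the minimum over $(f,g) \in A$ of $\mbox{rnk}^C(f,g)$, and $\mbox{rnk}^C(f,g) \geq \alpha$ iff for all $f'$ and all $\beta < \alpha$ there is $g' \in C[f']$ with $\{(f,g),(f',g')\}$ coherent and $\mbox{rnk}^C(f',g') \geq \beta$. This recursive clause is precisely a back-and-forth/universal-existential sentence, so I set
\[
\phi_\alpha(\overline{x}) := \psi_n(\overline{x}) \land \bigwedge_{i < n} \bigwedge_{\beta < \alpha} \forall y\, \exists z\, \phi_\beta^{(2)}(x_i, z)\big|_{\text{relativized so } z \in C,\ \{x_i,z\}\text{ coherent, and } z \text{ sits over } y},
\]
where $\phi_\beta^{(2)}$ denotes the arity-$2$ version of the inductively-constructed formula. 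Concretely, the quantifier $\forall y$ ranges over first components $f'$ — captured by quantifying over an element and reading off its $E_{p_n}$-class data — and $\exists z$ asserts the existence of a witness $(f',g')$ in $C$ lying in the $E_{p_n}$-classes prescribed by $y$, coherent with $x_i$, and of rank $\geq \beta$. The clause $\mbox{rnk}^C(f',g') \geq \beta$ is exactly $\phi_\beta$ of arity one (applied to the single element $z$), so the induction closes. The conjunction over $\beta < \alpha$ is a legitimate (possibly proper-class-indexed, but for each fixed $\alpha$ set-sized) $\mathcal{L}_{\infty\omega}$ conjunction, and the minimum-over-$A$ reformulation is what lets me reduce the $n$-ary rank to the conjunction over the single coordinates $x_i$.

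The main obstacle I anticipate is making the quantifier ``for all $f'$'' and ``there is $g' \in C[f']$'' genuinely expressible, since $f', g'$ are elements of $2^\omega$ but the structure $\MM$ only sees them through the equivalence relations $E_{p_n}, E_{q_n}$. The point to verify carefully is that ``the first coordinate of $z$ equals $f'$'' is not literally sayable, but I do not need it: the rank recursion only requires that $f'$ range over all possible first coordinates, and quantifying $\exists z \in C$ already produces a witness whose first coordinate is some $f'$; to get \emph{all} $f'$ I instead quantify over an auxiliary element $y \in \MM$ and demand $z\, E_{p_n}\, y$ for all $n$ (i.e.\ $z$ and $y$ have the same first coordinate), which pins down the first coordinate of $z$ to that of $y$ via an $\mathcal{L}_{\omega_1\omega}$ conjunction. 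Since $\MM$ realizes every first coordinate, letting $y$ range over all of $\MM$ achieves the universal quantifier over $f'$. Once this translation between honest $2^\omega$-data and the $E_p, E_q$-definable traces is set up at the base case, the successor and limit steps are routine: at limits $\phi_\alpha = \psi_n \land \bigwedge_{\beta<\alpha}\phi_\beta$ matches the definition of rank at limit stages, and the successor/general clause is as above. I would also record that the formulas lie in $\mathcal{L}_{\infty\omega}$ rather than $\mathcal{L}_{\omega_1\omega}$ only because $\alpha$ may be uncountable, which is harmless for the intended application.
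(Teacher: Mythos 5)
The paper's proof of this lemma is just the word ``straightforward,'' and your construction is exactly the intended verification: a simultaneous transfinite induction on $\alpha$ across all arities, with coherence and membership in $C$ expressed by countable conjunctions over the $E_{p_m},E_{q_m}$ and the unary predicate, and with the quantifier ``for all $f'$, exists $g'\in C[f']$'' rendered as $\forall y\,\exists z$ where $z$ is forced to share its first coordinate with $y$ via $\bigwedge_m E_{p_m}(z,y)$. One small caveat: your reduction of the $n$-ary rank to a conjunction of unary ranks via the minimum-over-$A$ lemma gives the wrong answer when $n=0$ (the empty minimum is $\infty$, whereas $\mbox{rnk}^C(\emptyset)$ need not be), and the $n=0$ instance is precisely the one invoked immediately after the lemma to separate $(\MM,C)$ from $(\MM,G)$; this is repaired within your own framework by defining $\phi_\alpha()$ directly from the recursive definition of rank, as $\bigwedge_{\beta<\alpha}\forall y\,\exists z\,\bigl(z\in C\land\bigwedge_m E_{p_m}(z,y)\land\phi^{(1)}_\beta(z)\bigr)$, rather than through the singleton decomposition.
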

\begin{proof}
Straightforward. Here, by $(\mathfrak{M}, C)$ we mean the expansion of $\mathfrak{M}$ by a unary predicate for $C$.
\end{proof}

Suppose $C \in \mathbf{C}$. Then let $G \in \mathbf{C}$ be defined by $G[f] = $ the group of which $C[f]$ is a coset. It is clear that $\mbox{rnk}^G(\emptyset) =  \infty$, indeed $\{(f, \overline{0}): f \in 2^\omega\}$ is a total coherent sequence. It follows from the preceding that if $\mbox{rnk}^C(\emptyset) < \infty$ then $(\MM, C) \not \equiv_{\infty} (\MM, G)$. On the other hand,

\begin{lemma}
    Let $C, G$ be as above. If $\mbox{rnk}^C(\emptyset) \geq \alpha$ then $(\MM, C) \equiv_{\alpha} (\MM, G)$.
\end{lemma}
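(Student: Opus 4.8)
The plan is to prove the statement as the $k=0$ instance of a back-and-forth claim established by induction on $\alpha$. Since $(\MM, C)$ and $(\MM, G)$ share the same underlying reduct $\MM$ and differ only in the unary predicate, I would compare tuples with identical $f$-coordinates. Precisely, I would prove: for every $\alpha$, every $C \in \mathbf{C}$ with associated group-version $G$, and all tuples $\overline{u} = ((f_i, g_i))_{i<k}$ in $(\MM, C)$ and $\overline{v} = ((f_i, h_i))_{i<k}$ in $(\MM, G)$ satisfying (a) equal $f$-parts, (b) equal quantifier-free type, and (c) a distinguished index set $T$ for which $\{(f_i,g_i): i \in T\}$ is a coherent subset of $C$ with $\mbox{rnk}^C \geq \alpha$ and $\{(f_i, h_i): i \in T\}$ is a coherent subset of $G$, with the remaining coordinates controlled by this ``core'' through the fiberwise coset structure, one has $((\MM, C), \overline{u}) \equiv_\alpha ((\MM, G), \overline{v})$. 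The Lemma is the case $k=0$, $T=\emptyset$, using $\mbox{rnk}^C(\emptyset) \geq \alpha$. The observation that makes (b) tractable is that, for tuples with equal $f$-parts, the relations $E_{p_n}$ match automatically, while an $E_{q_n}$-discrepancy between coordinates $i,j$ can occur only where $f_i \restriction_{n+1} = f_j \restriction_{n+1}$; working over $\mathbb{F}_2$, condition (b) then amounts to the coordinatewise identities $(g_i - g_j)(n) = (h_i - h_j)(n)$ on those $n$, together with matching predicate membership. In particular a coherent in-predicate tuple has no $E_{q_n}$-discrepancies at all, so coherent cores on the two sides automatically agree in type.

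For the successor step $\alpha+1 \to \alpha$ I would analyze Spoiler's move according to the fiber $f$ of the played element. The one case that consumes rank is when Spoiler plays, on the rich side $(\MM, G)$, an element $(f,h) \in G$ on a \emph{new} fiber that is coherent with the $G$-core. Here I must produce $(f,g) \in C$ coherent with the $C$-core and matching all discrepancies. This is exactly what $\mbox{rnk}^C(\text{core}) \geq \alpha+1$ supplies: by the definition of the rank, for the given $f$ there is $g$ with $\{(f,g)\} \cup \text{core}$ a coherent subset of $C$ of rank $\geq \alpha$, so the enlarged core again witnesses (c) at level $\alpha$. Because the new element is coherent with the core on both sides, it introduces no discrepancies with the core coordinates, and the earlier lemma that $\mbox{rnk}^C$ is the minimum over singletons confirms that the rank of the enlarged core is governed by its members, so no rank is lost beyond the single level spent.

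All remaining moves leave the core untouched (so its rank stays $\geq \alpha+1 \geq \alpha$) and are matched by fiberwise translation over $\mathbb{F}_2$: if Spoiler plays on a fiber already occupied by a coordinate $i_0$, I respond with $h = h_{i_0} + (g - g_{i_0})$ (or its inverse), and the coordinatewise identities from (b) propagate to the new coordinate, preserving both the discrepancy pattern and, when $i_0$ lies in the predicate, predicate membership; if Spoiler plays on a genuinely new fiber that is out of the predicate or incoherent with the core, the discrepancy constraints involve only the finitely many coordinates sharing a prefix with $f$, and a suitable representative of $C[f]$ (resp.\ an element off the coset) can be chosen. The main obstacle is precisely the bookkeeping in this last step: I must check that the translated or freely chosen response simultaneously satisfies the discrepancy identities against \emph{all} coordinates (including cross-fiber pairs sharing a common prefix) and the correct predicate membership, and that the control-by-core condition (c) is preserved so these translations remain available in later rounds. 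I expect that verifying this mutual consistency --- and, in the rank-consuming case, that the coherent extension furnished by the rank can be taken to also respect the discrepancies forced by the non-core coordinates --- is where the real work lies, whereas the base case ($\alpha=0$ is condition (b)) and the limit case are immediate.
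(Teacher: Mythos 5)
You have the right underlying strategy --- a fiberwise-translation back-and-forth controlled by a coherent ``core'' in $C$ whose $\mbox{rnk}^C$ is tracked --- but the move analysis as you describe it is wrong at exactly the point you flag as ``the real work,'' and the error is not just bookkeeping. You claim the only rank-consuming move is Spoiler playing a new-fiber element of $G$ coherent with the $G$-core, and that new-fiber moves which are out of the predicate or incoherent with the core can be answered by ``a suitable representative'' while ``the core stays untouched.'' This fails. Over $\mathbb{F}_2$ the quantifier-free type of two points of a single fiber $\{f\}\times 2^\omega$ records $\{n: g(n)=g'(n)\}$ and hence determines $g+g'$ exactly, so the first element $(f,h)$ played on a fiber, together with your response $(f,g)$, fixes a translation constant $c_f = g+h$ that forces your response to every later play on that fiber. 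Preserving the predicate on later rounds forces $c_f \in C[f]$, and answering a later coherent play such as $(f',\overline{0})\in G$ forces the response on fiber $f'$ to agree with $c_f$ along the common prefix of $f$ and $f'$ --- i.e.\ it forces $\{(f,c_f)\}\cup(\text{core})$ to be a coherent subset of $C$ admitting further coherent extensions, which is precisely a rank requirement. So \emph{every} new-fiber move, on either side, in or out of the predicate, commits you to a high-rank coherent core element on that fiber and consumes a rank level. A second instance of the same trap: if Spoiler plays on the $C$ side a coherent element of $C$ of low $\mbox{rnk}^C$, you must not adopt it as the core element for its fiber (its rank cannot sustain the induction); the core element must be chosen by you, with Spoiler's element recorded as a translate of it.

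The paper's proof is the clean repackaging that dissolves all of this: it takes partial isomorphisms $\pi$ from $(\MM,G)$ to $(\MM,C)$ whose domain and range are entire unions of fibers $X_D = D\times 2^\omega$, fixing the first coordinate, so $\pi$ is determined by the coherent set $A_\pi=\{\pi(f,\overline{0}): f\in D\}\subseteq C$, and $\mbox{rnk}^C(A_\pi)$ is declared to be the rank of $\pi$. Any play inside an already-claimed fiber is answered by $\pi$ or $\pi^{-1}$ with nothing to check, and every new-fiber play is the single, uniform, rank-consuming step of extending $A_\pi$ coherently, which is exactly what the definition of $\mbox{rnk}^C$ supplies. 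If you reorganize your induction to commit to a whole fiber the first time it is touched, your argument becomes the paper's; as written, your invariant and case split do not cover the situations above.
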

\begin{proof}
    For each $D \subset 2^\omega$ finite let $X_D = D \times 2^\omega \subseteq \MM$. Let $\Pi_D$ be the set of all partial isomorphisms $\pi$ from $(\MM, G) \to (\MM, C)$ with domain and range equal to $X_D$, such that $\pi$ fixes the first coordinate, $\pi(f, g) = (f, g')$. For each $\pi \in \Pi_D$ let $\mbox{rnk}(\pi)$ denote $\mbox{rnk}^C(A_\pi)$ where $A_\pi = \{\pi((f, \overline{0})): f \in D\}$. Then we know $\mbox{rnk}(\emptyset) \geq \alpha$. It suffices to show that for all $\beta$, whenever $\pi \in X_D$ has $\mbox{rnk}(\pi) = \beta$, then for all $f \in 2^\omega$ and for all $\gamma < \beta$ there is $\pi' \in X_{D \cup \{f\}}$ extending $\pi$ and of rank at least $\gamma$. This follows from the definition of $\mbox{rnk}^C$.
\end{proof}

It thus suffices to construct, by induction on $1 \leq \alpha< \omega_1$, some $C \in \mathbf{C}$ with $\mbox{rnk}^C(\emptyset) = \alpha$. 

When $\alpha = 1$ let $C[f] = \{\overline{0}\}$ unless $f = \overline{1}$, in which case $C[f] = \{\overline{1}\}$. Then it is clear that $\mbox{rnk}^C(\{(\overline{1}, \overline{1})\}) = 0$ so $\mbox{rnk}^C(\emptyset) = 1$.

Suppose $\alpha < \omega_1$ and we have constructed $C \in \mathbf{C}$ with $\mbox{rnk}(C) = \alpha$. Let $G$ be defined by $G[f] = $ the group of which $C[f]$ is a coset. Let $D \in \mathbf{C}$ be defined by the following clauses:

\begin{itemize}
    \item $D[0f] = \{\overline{0}\}$;
    \item For each $i < 2$, letting $j = 1 -i$, let $D[1i f] = i0 G[f] \cup j0 C[f]$.
\end{itemize}

Here, by $s X$ we mean the set of concatenations $\{st: t \in X\}$. If $X$ is a coset of a group $G$ then $sX$ is a coset of $0^{|s|} G$. Thus each $D[f]$ is seen to be a coset of a group, since the union of any two cosets of a fixed group is itself a coset of a group (since we are working with $\mathbb{F}_2$-vector spaces). We need to show $\mbox{rnk}(D) = \alpha+1$. 

For each $i < 2$ let $A_i$ be the coherent sequence consisting of all pairs $(0f, \overline{0})$ for $f \in 2^\omega$, and all pairs $(1if, i0\overline{0})$ for $f \in 2^\omega$.

Given $B \subseteq C$, and given $i < 2$, let $j = 1-i$ and let $F_i(B) = \{(1j f, i0g): (f, g) \in B\}$.

\begin{lemma}
Suppose $B \subseteq C$ and $i < 2$, Then $B$ is coherent if and only if $F_i(B)$ is coherent, which is the case if and only if $F_i(B) \cup A_i$ is coherent.
\end{lemma}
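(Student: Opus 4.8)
The plan is to verify the coherence condition directly, exploiting that coherence is a condition on pairs of elements determined entirely by comparing coordinates along common prefixes of the first coordinate, and that $F_i$ acts by prepending fixed length-two strings to both coordinates (hence is a bijection from $B$ onto $F_i(B)$). The whole argument is bookkeeping of prefix lengths, carried out separately for each of the two equivalences.

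First I would establish that $B$ is coherent if and only if $F_i(B)$ is coherent. Take $(f,g),(f',g')\in B$, mapping to $(1jf, i0g),(1jf', i0g')\in F_i(B)$ with $j = 1-i$. Every element of $F_i(B)$ carries the length-two first-coordinate prefix $1j$ and second-coordinate prefix $i0$, so for $n\le 2$ the coherence requirement holds trivially (both first coordinates and both second coordinates already agree on their first $n$ entries), while for $n>2$ the first coordinates agree through level $n$ exactly when $f,f'$ agree through level $n-2$, and likewise for the second coordinates. Thus the level-$n$ coherence condition for the image pair is precisely the level-$(n-2)$ condition for the original pair; since the $m=0$ condition is vacuous for $B$ as well, ranging over all $n$ shows coherence of $F_i(B)$ is both preserved and reflected by $F_i$.

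Next I would treat the second equivalence. The backward direction is immediate, as a subset of a coherent set is coherent. For the forward direction I would first confirm that $A_i$ itself is coherent: within each family $\{(0f,\overline{0}):f\}$ and $\{(1if, i0\overline{0}):f\}$ the second coordinate is constant, and an element of the first family has first coordinate beginning with $0$ while an element of the second begins with $1$, so their first coordinates disagree already at position $0$ and the condition is vacuous. Assuming $F_i(B)$ coherent, only the cross pairs remain. A pair of $F_i(B)$ with a type-one element $(0f',\overline{0})$ has first coordinates beginning with $1$ and $0$, so they agree only on the empty prefix. A pair of $F_i(B)$, say $(1jf, i0g)$, with a type-two element $(1if', i0\overline{0})$ is the one case that uses $j = 1-i$: the first coordinates agree at position $0$ but diverge at position $1$ since $j\ne i$, hence agree only through level $1$, and at levels $\le 1$ both second coordinates share the prefix $i0$, so the required agreement holds. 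Therefore $F_i(B)\cup A_i$ is coherent.

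There is no substantive obstacle; the only point needing care is the observation that the choice $j = 1-i$ forces the $F_i$-image and the type-two part of $A_i$ to occupy different binary branches below the node $1$, so that their first coordinates can never agree past level $1$. This is exactly what keeps the union coherent, and it is why the benchmark construction separated the two pieces along the second bit.
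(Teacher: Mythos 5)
Your verification is correct and complete; the paper itself dismisses this lemma as ``Straightforward,'' and your prefix-shifting bookkeeping for the first equivalence together with the case analysis of cross pairs (using $j\ne i$ to force divergence at position $1$) is exactly the intended direct check. No gaps.
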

\begin{proof}
Straightforward.
\end{proof}

\begin{lemma}
    Suppose $B \subseteq C$ is coherent and $i < 2$. Then $\mbox{rnk}^D(A_i \cup F_i(B)) \geq \mbox{rnk}^C(B)$. If $B$ is nonempty then $\mbox{rnk}^C(B) \geq \mbox{rnk}^D(F_i(B))$. 
\end{lemma}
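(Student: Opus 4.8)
The plan is to establish the two inequalities separately, each by a transfinite induction on an ordinal $\gamma$. For the first I would prove the implication ``if $\mbox{rnk}^C(B) \geq \gamma$ then $\mbox{rnk}^D(A_i \cup F_i(B)) \geq \gamma$'' simultaneously for all coherent $B \subseteq C$; for the second I would prove ``if $B \neq \emptyset$ and $\mbox{rnk}^D(F_i(B)) \geq \gamma$ then $\mbox{rnk}^C(B) \geq \gamma$.'' Taking $\gamma = \mbox{rnk}^C(B)$, respectively $\gamma = \mbox{rnk}^D(F_i(B))$, and reading $\gamma = \infty$ as ``for all $\gamma$'', these yield the two displayed inequalities. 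Throughout I would invoke the preceding coherence lemma to transport coherence along $F_i$ between $C$ and $D$, so that the sets appearing are legitimately in the domain of $\mbox{rnk}$. Since $\mbox{rnk} \geq \gamma$ is defined by a uniform quantification over $\beta < \gamma$, limit stages need no separate argument.

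For the first inequality, to witness $\mbox{rnk}^D(A_i \cup F_i(B)) \geq \gamma$ I fix $\beta < \gamma$ and a first coordinate $h \in 2^\omega$ and split on the leading bits of $h$. If $h = 0h'$ or $h = 1ih'$, then the pair $(0h', \overline{0})$, respectively $(1ih', i0\overline{0})$, already lies in $A_i$, so adjoining it leaves the set unchanged and I conclude $\mbox{rnk}^D(A_i \cup F_i(B)) \geq \beta$ from the inductive hypothesis applied at $\beta < \gamma$ (using $\mbox{rnk}^C(B) \geq \gamma > \beta$). If instead $h = 1jh'$ with $j = 1 - i$, I use $\mbox{rnk}^C(B) \geq \gamma$ to find $g_0$ with $B \cup \{(h', g_0)\}$ coherent in $C$ of rank $\geq \beta$; then $(1jh', i0g_0) = F_i((h', g_0))$ lies in $D[1jh'] = j0\,G[h'] \cup i0\,C[h']$, and $(A_i \cup F_i(B)) \cup \{(1jh', i0g_0)\} = A_i \cup F_i(B \cup \{(h',g_0)\})$, whose rank is $\geq \beta$ by the inductive hypothesis. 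This exhausts the cases.

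For the second inequality I fix $\beta < \gamma$ and $f \in 2^\omega$ and aim to extend $B$ in $C$ at $f$ to rank $\geq \beta$. Using $\mbox{rnk}^D(F_i(B)) \geq \gamma$ at the first coordinate $1jf$, I obtain $g'$ with $F_i(B) \cup \{(1jf, g')\}$ coherent in $D$ of rank $\geq \beta$, where a priori $g' \in j0\,G[f] \cup i0\,C[f]$. The crux of the lemma is to exclude the $j0\,G[f]$ alternative: since $B \neq \emptyset$, pick $(f_0, g_0) \in B$, so $(1jf_0, i0g_0) \in F_i(B)$; the first coordinates $1jf_0$ and $1jf$ agree on their first two bits, so coherence forces $g'$ and $i0g_0$ to agree on their first two bits, whence $g'$ begins with $i0$ and therefore $g' \in i0\,C[f]$, say $g' = i0g$ with $g \in C[f]$. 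Thus $(1jf, g') = F_i((f,g))$ and $F_i(B) \cup \{(1jf, g')\} = F_i(B \cup \{(f,g)\})$; this set is nonempty and, by the coherence lemma, corresponds to a coherent $B \cup \{(f,g)\} \subseteq C$, so the inductive hypothesis at $\beta$ gives $\mbox{rnk}^C(B \cup \{(f,g)\}) \geq \beta$, exactly the witness needed for $\mbox{rnk}^C(B) \geq \gamma$. This coherence-forcing step, and its essential use of $B \neq \emptyset$, is the one genuinely delicate point; everything else is bookkeeping reducing to the previous lemma and the definition of $\mbox{rnk}$.
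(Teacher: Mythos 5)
Your proposal is correct and follows essentially the same route as the paper: the same two transfinite inductions, the same case split on the leading bits of the new first coordinate for the first inequality, and the same use of nonemptiness of $B$ to force the new second coordinate into $i0\,C[f]$ for the second. You actually spell out the coherence-forcing step (comparing against some $(1jf_0, i0g_0) \in F_i(B)$ to pin down the first two bits of $\tilde{g}$) that the paper leaves implicit, which is a welcome addition.
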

\begin{proof}
Let $j = 1-i$.

    First we show by induction on $\beta$ that $\mbox{rnk}^C(B) \geq \beta$ implies $\mbox{rnk}^D(A_i \cup F_i(B)) \geq \beta$. Suppose we have established this for all $\gamma < \beta$, and $B$ is given. Suppose $\tilde{f} \in 2^\omega$ is given. If $\tilde{f}(0) = 0$ or if $\tilde{f}(1) = i$ then $\tilde{f} \in \mbox{dom}(A_i)$ and there is nothing to do. So suppose $\tilde{f} = 1j f$. Choose $g$ such that $B \cup (f, g)$ is coherent of rank at least $\gamma$. Then $F_i(B \cup \{(f, g)\}) = F_i(B) \cup \{(1jf, i0g)\}$ is coherent by the preceding lemma, and is of rank at least $\gamma$ by the inductive hypothesis. 

    Next we show by induction on $\beta$ that for nonempty $B$, $\mbox{rnk}^D(F_i(B)) \geq \beta$ implies $\mbox{rnk}^C(B) \geq \beta$. So suppose we have established this for all $\gamma < \beta$, and $B$ is given. Let $f \in 2^\omega$. Choose $\tilde{g}$ such that $F_i(B) \cup \{(1jf, \tilde{g})\}$ is coherent of rank at least $\gamma$. Since $B$ is nonempty, we must have that $\tilde{g}$ is of the form $i0g$ for some $g \in C[f]$, and $\mbox{rnk}^C(B \cup \{(f, g)\}) \geq \gamma$ by the inductive hypothesis.
\end{proof}

\begin{lemma}
    $\mbox{rnk}^D(\emptyset) = \alpha+1$.
\end{lemma}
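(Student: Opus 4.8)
The plan is to establish the two inequalities $\mbox{rnk}^D(\emptyset) \geq \alpha+1$ and $\mbox{rnk}^D(\emptyset) \not\geq \alpha+2$ separately, using the preceding lemma (relating ranks of $A_i \cup F_i(B)$ and $F_i(B)$ in $D$ to ranks of $B$ in $C$) as the engine and the fact that the rank of a finite coherent set is the minimum of the ranks of its elements. Throughout I would use that, for the empty set, $\mbox{rnk}^D(\emptyset) \geq \beta+1$ is equivalent to: for every $\tilde f \in 2^\omega$ there is $g \in D[\tilde f]$ with $\mbox{rnk}^D(\tilde f, g) \geq \beta$.

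For the lower bound I would apply the preceding lemma with $B = \emptyset$. Since $F_i(\emptyset) = \emptyset$, this gives $\mbox{rnk}^D(A_i) \geq \mbox{rnk}^C(\emptyset) = \alpha$ for each $i < 2$, so by monotonicity every element of $A_i$ has $\mbox{rnk}^D \geq \alpha$. Now $A_0 \cup A_1$ meets every fiber: $A_i$ contains $(\tilde f, \overline{0})$ whenever $\tilde f(0) = 0$ and $(\tilde f, i0\overline{0})$ whenever $\tilde f$ begins $1i$, so between $i = 0$ and $i = 1$ the prefixes $0$, $10$, $11$ are all covered. Hence over every $\tilde f$ there is an element of rank $\geq \alpha \geq \beta$ for all $\beta \leq \alpha$, yielding $\mbox{rnk}^D(\emptyset) \geq \alpha+1$.

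For the upper bound I must produce a single direction $\tilde f$ all of whose fiber elements have rank $\leq \alpha$. First, since $\mbox{rnk}^C(\emptyset) = \alpha$ is not $\geq \alpha+1$, there is a \emph{witness} $f^* \in 2^\omega$ with $\mbox{rnk}^C(f^*, g) < \alpha$ for every $g \in C[f^*]$. Fix any prefix-$1j$ direction $\tilde f = 1jf_0$, with $i = 1-j$; its fiber $D[1jf_0] = i0\,C[f_0] \cup j0\,G[f_0]$ splits into a C-layer $\{(1jf_0, i0g): g \in C[f_0]\}$ and a G-layer $\{(1jf_0, j0h): h \in G[f_0]\}$. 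Each C-layer element equals $F_i(\{(f_0,g)\})$, so the second half of the preceding lemma bounds its rank by $\mbox{rnk}^C(f_0, g) \leq \mbox{rnk}^C(\emptyset) = \alpha$.

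The main obstacle is the G-layer: internally it is a shifted copy of $G$, and $\mbox{rnk}^G(\emptyset) = \infty$, so its bound cannot be read off from $G$ and must come from the cross-branch interaction with $C$'s witness direction. For a G-layer element $x = (1jf_0, j0h)$ I would test the single opposite-branch direction $1if^*$: the first coordinates $1jf_0$ and $1if^*$ agree exactly in position $0$, so coherence forces any extension's value to begin with $j$, and among $D[1if^*] = i0\,G[f^*] \cup j0\,C[f^*]$ only the $j0\,C[f^*]$ part qualifies. These are exactly the elements $F_j(\{(f^*,g')\})$, of rank $\leq \mbox{rnk}^C(f^*, g') < \alpha$ by the witness property. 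Thus in direction $1if^*$ every coherent extension of $x$ has rank $< \alpha$, so (by the minimum property) $\mbox{rnk}^D(x) \leq \alpha$. Consequently every element over $\tilde f = 1jf_0$ has rank $\leq \alpha$, giving $\mbox{rnk}^D(\emptyset) \not\geq \alpha+2$; combined with the lower bound this yields $\mbox{rnk}^D(\emptyset) = \alpha+1$.
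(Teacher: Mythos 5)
Your proof is correct and follows essentially the same route as the paper's: the lower bound via $\mbox{rnk}^D(A_i) \geq \mbox{rnk}^C(\emptyset) = \alpha$ together with the fact that $A_0 \cup A_1$ meets every fiber, and the upper bound by fixing a direction $1jf_0$, bounding the C-layer through $F_i$ and monotonicity, and killing the G-layer by testing coherent extensions into the opposite branch $1if^*$ over $C$'s witness direction. The paper merely specializes to $\tilde f = 10\overline{0}$ (i.e.\ $j=0$, $f_0 = \overline{0}$); otherwise the two arguments coincide.
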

\begin{proof}
    First we show $\mbox{rnk}^D(\emptyset) \geq \alpha+1$. First note that for all $f \in 2^\omega$ there is $i < 2$ and $g \in 2^\omega$ such that $(f, g) \in A_i$; since $\mbox{rnk}^D(A_i) \geq \mbox{rnk}^C(\emptyset) = \alpha$, this implies $\mbox{rnk}^D(\{(f, g)\}) \geq \alpha$, so $\mbox{rnk}^D(\emptyset) \geq \alpha+1$.

    Conversely, we show that $\mbox{rnk}^D(\emptyset) \leq \alpha+1$. For this it suffices to show that for all $\tilde{g} \in D[10\overline{0}]$, $\mbox{rnk}^D(\{(10\overline{0}, \tilde{g})\}) \leq \alpha$. Write $\tilde{g} = j0g$. If $j = 1$ then this follows from the preceding lemma, since $\{(10 \overline{0},  10g)\} = F_1(\{(f, g)\})$. So suppose $j = 0$. Choose $f' \in 2^\omega$ such that for all $g' \in C[f']$, $\mbox{rnk}^C(\{(f', g')\}) < \alpha$. Then whenever $B := \{(10 \overline{0}, 00g), (11f', \tilde{g}')\}$ is coherent, we must have that $\tilde{g}'$ is of the form $00 g'$ for some $g' \in C[f']$. Hence $\mbox{rnk}^D(B) \leq \mbox{rnk}^D(11 f', 00g')$ which in turn by the preceding lemma is $\leq \mbox{rnk}^C(f', g') < \alpha$. 
\end{proof}

Now we describe how to handle limits. Let $C_n \in \mathbf{C}$ for $n < \omega$ be such that $\mbox{rnk}^{C_n}(\emptyset)$ is strictly increasing with limit $\alpha < \omega_1$. Let $G_n[f]$ be the group of which $C_n[f]$ is a coset, so each $G_n \in \mathbf{C}$.

Partition $\omega$ into infinite pieces $I, I_n: n < \omega$. We can suppose each $\min(I_n)$ is bigger than the $n$th element of $I$.

As notation, given infinite $J \subseteq \omega$ and $f \in 2^\omega$, let $f_J: \omega \to \omega$ be defined by $f_J(n) = f(j_n)$ where $J = \{j_m: m < \omega\}$ is enumerated in increasing order.

For each $f \in 2^\omega$ let $G[f]$ be the group of all $g \in 2^\omega$ such that for each $n$ if $g_I(n) = 0$ then $g_{I_n} \in G_n[f_{I_n}]$, and if $g_I(n) = 1$ then $g_{I_n} \in C_n[f_{I_n}]$. We have a homomorphism $\rho: G[f] \to 2^\omega$ sending $g$ to $g_I$. Let $H[f]$ be the preimage under $\rho$ of all eventually $0$ sequences, and let $D[f]$ be the preimage under $\rho$ of all eventually $1$ sequences, so $D[f]$ is a coset of $H[f]$.

It is convenient to write $C^0_n = G_n$, $C^1_n = C_n$.

It suffices to establish that $\mbox{rnk}^D(\emptyset) = \alpha$. Let $(f, g) \in D$. Then define $\tau(f, g)$ to be the minimum over all $n$ of $\beta_n$, where $\beta_n = \mbox{rnk}^{C^i_n}(f_{I_n}, g_{I_n})$, where $i = g_I(n)$.

\begin{lemma}
    $\tau(f, g) = \mbox{rnk}^D(\{(f, g)\})$. 
\end{lemma}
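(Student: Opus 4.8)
The plan is to deduce the equality from two implications proved simultaneously by transfinite induction on $\gamma$, for all $(f,g)\in D$: (i) if $\tau(f,g)\geq\gamma$ then $\mbox{rnk}^D(\{(f,g)\})\geq\gamma$; and (ii) if $\mbox{rnk}^D(\{(f,g)\})\geq\gamma$ then $\tau(f,g)\geq\gamma$. Because both $\mbox{rnk}^D$ and $\tau$ are described by a ``$\geq\gamma$'' predicate whose defining clause quantifies over all $\beta<\gamma$, the base and limit cases are automatic and the content lies entirely in applying the inductive hypothesis at the relevant $\beta<\gamma$; reading $\gamma=\infty$ as well, (i) and (ii) together give the equality, including when either side is $\infty$. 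The one device I would set up first is a reformulation of coherence: $\{(f,g),(f',g')\}$ is coherent exactly when $g$ and $g'$ agree on every coordinate strictly below the least coordinate where $f$ and $f'$ differ. From this it is routine that the pair is globally coherent provided each block pair $\{(f_{I_n},g_{I_n}),(f'_{I_n},g'_{I_n})\}$ and the selector pair $\{(f_I,g_I),(f'_I,g'_I)\}$ is coherent, and conversely that global coherence restricts to coherence of any single block pair. Throughout I write $e_n$ for the $n$-th element of $I$, so that the selector $i_n:=g_I(n)=g(e_n)$ decides whether block $n$ is governed by $C^1_n=C_n$ or $C^0_n=G_n$.

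For (i): given $\beta<\gamma$ and $f'\in 2^\omega$, I would keep the selectors fixed by setting $g'_I:=g_I$ (eventually $1$, so the candidate lies in the range of $\rho$) and choose each block value separately. Since $\tau(f,g)\geq\gamma$ gives $\mbox{rnk}^{C^{i_n}_n}(f_{I_n},g_{I_n})\geq\gamma>\beta$, the definition of the block rank yields $g'_{I_n}\in C^{i_n}_n[f'_{I_n}]$ coherent with $(f_{I_n},g_{I_n})$ and of block rank $\geq\beta$. Assembling these with $g'_I=g_I$ gives $g'\in D[f']$; it is coherent with $(f,g)$ by the reformulation (each block pair is coherent and the selector pair is coherent since $g'_I=g_I$), and $\tau(f',g')\geq\beta$. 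The inductive hypothesis (i) then gives $\mbox{rnk}^D(\{(f',g')\})\geq\beta$, which is exactly what $\mbox{rnk}^D(\{(f,g)\})\geq\gamma$ demands.

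For (ii): fix a block $n_0$ and aim at $\mbox{rnk}^{C^{i_{n_0}}_{n_0}}(f_{I_{n_0}},g_{I_{n_0}})\geq\gamma$. Given $\beta<\gamma$ and $f''\in 2^\omega$, let $f'$ agree with $f$ off $I_{n_0}$ and equal $f''$ on $I_{n_0}$. Applying $\mbox{rnk}^D(\{(f,g)\})\geq\gamma$ to $\beta$ and $f'$ gives $g'\in D[f']$ coherent with $(f,g)$ and with $\mbox{rnk}^D(\{(f',g')\})\geq\beta$, so $\tau(f',g')\geq\beta$ by the inductive hypothesis (ii); reading off block $n_0$ gives $\mbox{rnk}^{C^{i'_{n_0}}_{n_0}}(f'',g'_{I_{n_0}})\geq\beta$ with $i'_{n_0}=g'_I(n_0)$. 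Setting $g'':=g'_{I_{n_0}}$, it remains to see that $i'_{n_0}=i_{n_0}$ and that $\{(f_{I_{n_0}},g_{I_{n_0}}),(f'',g'')\}$ is coherent; then $g''$ is the required witness and, letting $\beta$ and $f''$ vary, block rank $\geq\gamma$ follows, hence $\tau(f,g)\geq\gamma$.

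The crux — and the only place the hypothesis ``$\min(I_n)$ exceeds the $n$-th element of $I$'' is used — is the identity $i'_{n_0}=i_{n_0}$. Since $f$ and $f'$ differ only inside $I_{n_0}$ and every coordinate of $I_{n_0}$ lies above $e_{n_0}$, the least coordinate on which $f,f'$ differ exceeds $e_{n_0}$; by the coherence reformulation $g$ and $g'$ then agree on $[0,e_{n_0}]$, so $i'_{n_0}=g'(e_{n_0})=g(e_{n_0})=i_{n_0}$, keeping the extension inside the same coset structure $C^{i_{n_0}}_{n_0}$. The same computation identifies the global first difference with the first $I_{n_0}$-coordinate where $f,f'$ differ, which yields the block coherence of $\{(f_{I_{n_0}},g_{I_{n_0}}),(f'',g'')\}$ for free. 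I expect this selector/coherence bookkeeping — ensuring that the finite-information selector $i_{n_0}$ is determined \emph{before} any variation within block $n_0$ can occur — to be the main obstacle, the remaining inductive steps being routine.
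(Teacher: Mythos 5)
Your proof is correct and follows essentially the same two-induction strategy as the paper (fix the selector $g'_I=g_I$ and build block-by-block witnesses for one direction; perturb $f$ only inside a single block $I_{n_0}$ and read off the block rank for the other); in fact you are more careful than the paper's write-up about the key point that coherence plus $\min(I_{n_0})>e_{n_0}$ forces the selector $g'_I(n_0)=g_I(n_0)$, which the paper uses tacitly. One small caution: your blanket claim that global coherence restricts to coherence of an arbitrary block pair is false in general (a first difference of $f,f'$ outside $I_n$ can sit below later $I_n$-coordinates), but this does not affect your argument since you only invoke it when $f$ and $f'$ differ solely within $I_{n_0}$, where your explicit computation justifies it.
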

\begin{proof}
First we show by induction on $\beta$ that $\tau(f, g) \geq \beta$ implies $\mbox{rnk}^D(\{(f, g)\}) \geq \beta$. Suppose we have verified for all $\gamma < \beta$. Let $f' \in 2^\omega$ and $\gamma < \beta$ be arbitrary. For each $n$, choose $g'_n$ such that $\{(f_{I_n}, g_{I_n}), (f'_{I_n}, g'_n)\}$ is coherent and $\mbox{rnk}^{C^{g_I(n)}_n}(f'_{I_n}, g'_n) \geq \gamma$. Define $g' \in 2^\omega$ by $g'_{I_n} = g'_n$ and $g'_I = g_I$. Then $\{(f, g), (f', g')\}$ is easily seen to be coherent, and visibly $\tau(f', g') \geq \gamma$, so we conclude by the inductive hypothesis.

Second we show by induction on $\beta$ that $\mbox{rnk}^D(\{f, g\}) \geq \beta$ implies $\tau(f, g) \geq \beta$. Suppose we have verified for all $\gamma < \beta$. Let $n < \omega$ be given; we need to show $\mbox{rnk}^{C^{g_I(n)}_n}(f_{I_n}, g_{I_n}) \geq \beta$. Let $h \in 2^\omega$ and $\gamma < \beta$ be arbitrary. Define $f' \in 2^\omega$ by $f'_{I} = f_I$ and $f'_{I_m} = f_{I_m}$ for $m \not= i$ and $f'_{I_n} = h$. Choose $g'$ such that $\{(f, g), (f', g')\}$ is coherent and $\mbox{rnk}^D(f', g') \geq \gamma$. By the inductive hypothesis, $\mbox{rnk}^{C^{g_I(n)}_n}(f'_{I_n}, g'_{I_n}) \geq \gamma$, and it is easily seen that $\{(f_{I_n}, g_{I_n}), (f'_{I_n}, g'_{I_n})\}$ is coherent.
\end{proof}

\begin{lemma}
    $\mbox{rnk}^D(\emptyset) = \alpha$.
\end{lemma}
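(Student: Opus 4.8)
The plan is to compute $\mbox{rnk}^D(\emptyset)$ by reducing everything to the ranks of singletons $\{(f,g)\}$, which the preceding lemma already identifies with $\tau(f,g)$. Unwinding the definition of $\mbox{rnk}^D$, we have $\mbox{rnk}^D(\emptyset) \geq \gamma$ exactly when for every $\beta < \gamma$ and every $f \in 2^\omega$ there is $g \in D[f]$ with $\mbox{rnk}^D(\{(f,g)\}) = \tau(f,g) \geq \beta$. So it suffices to understand, for each $f$, which ordinals $\beta$ are achievable as $\tau(f,g)$ with $g$ ranging over $D[f]$: I will show that values $\beta < \alpha$ are always achievable while $\alpha$ itself never is.

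For the upper bound $\mbox{rnk}^D(\emptyset) \leq \alpha$ I would argue that no singleton has rank $\geq \alpha$. Fix any $(f,g) \in D$. Since $g \in D[f]$, the sequence $g_I = \rho(g)$ is eventually $1$, so there is some $n$ with $g_I(n) = 1$; for that $n$ the corresponding term in $\tau$ is $\mbox{rnk}^{C_n}(f_{I_n}, g_{I_n})$, which by monotonicity of $\mbox{rnk}^{C_n}$ (namely $A \subseteq B$ implies $\mbox{rnk}^{C_n}(A) \geq \mbox{rnk}^{C_n}(B)$) is at most $\mbox{rnk}^{C_n}(\emptyset) < \alpha$. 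Hence $\tau(f,g) \leq \mbox{rnk}^{C_n}(\emptyset) < \alpha$, so $\mbox{rnk}^D(\{(f,g)\}) < \alpha$ for every $(f,g) \in D$; consequently $\mbox{rnk}^D(\emptyset)$ cannot be $\geq \alpha+1$, i.e. $\mbox{rnk}^D(\emptyset) \leq \alpha$.

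For the lower bound $\mbox{rnk}^D(\emptyset) \geq \alpha$ I would, given $\beta < \alpha$ and $f \in 2^\omega$, construct $g \in D[f]$ with $\tau(f,g) \geq \beta$. Using that $\mbox{rnk}^{C_n}(\emptyset) \to \alpha$ strictly increasingly, fix $N$ with $\mbox{rnk}^{C_n}(\emptyset) > \beta$ for all $n \geq N$. Define $g$ piece by piece: for $n \geq N$ set $g_I(n) = 1$ and choose $g_{I_n} \in C_n[f_{I_n}]$ with $\mbox{rnk}^{C_n}(\{(f_{I_n}, g_{I_n})\}) \geq \beta$ (possible since $\mbox{rnk}^{C_n}(\emptyset) > \beta$); for $n < N$ set $g_I(n) = 0$ and $g_{I_n} = \overline{0}$, so the $n$-th term, computed using $C^0_n = G_n$, is $\mbox{rnk}^{G_n}(\{(f_{I_n}, \overline{0})\}) = \infty$ because the all-zero total coherent sequence already witnesses infinite rank in $G_n$. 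Then $g_I$ is eventually $1$, so $g \in D[f]$, and every term in the minimum defining $\tau(f,g)$ is $\geq \beta$, giving $\mbox{rnk}^D(\{(f,g)\}) = \tau(f,g) \geq \beta$. As $f$ and $\beta < \alpha$ were arbitrary, $\mbox{rnk}^D(\emptyset) \geq \alpha$, and combining the two bounds finishes the proof.

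The main obstacle is the lower bound, and specifically the finitely many small indices $n < N$ for which $\mbox{rnk}^{C_n}(\emptyset)$ is too small to supply a singleton of rank $\geq \beta$ through $C_n$. The resolution is to route those coordinates through the group side $G_n = C^0_n$, where the all-zero sequence always has infinite rank; this is exactly why the construction demands only that $g_I$ be eventually $1$ (membership in $D[f]$) rather than identically $1$. Thus the design of the limit stage—interleaving $C_n$ and $G_n$ coordinates and imposing an eventual-value condition via $\rho$—is precisely what lets a single $g$ absorb the small indices without lowering $\tau$.
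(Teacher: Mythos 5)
Your proof is correct and takes essentially the same approach as the paper: the upper bound via the observation that $g_I$ eventually equal to $1$ forces some coordinate through $C_n$ with $\mbox{rnk}^{C_n}(\emptyset)<\alpha$, and the lower bound by setting $g_I=(0)^N\overline{1}$ and routing the finitely many small indices through $G_n$ with the all-zero (infinite-rank) sequence. The only cosmetic difference is that you spell out the role of $\mbox{rnk}^{G_n}(\{(f_{I_n},\overline{0})\})=\infty$ explicitly, which the paper leaves implicit.
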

\begin{proof}
    Given any $(f, g) \in D$, there must be some $n$ with $g_I(n) = 1$ (since $g_I$ is eventually $1$), hence $\tau(f, g) \leq \mbox{rnk}^{C_n}(\emptyset) < \alpha$, so we get $\mbox{rnk}^D(\emptyset) \leq \alpha$. Conversely, suppose $\beta < \alpha$ and $f$ is given. Choose $n$ large enough so that $\mbox{rnk}^{C_n}(\emptyset) > \beta$. Choose $g \in 2^\omega$ so that $g_I = (0)^n \overline{1}$ and $g_{I_m} = \overline{0}$ for $m < n$ and such that for $m > n$, $\mbox{rnk}^{C_m}(f_{I_m}, g_{I_m}) \geq \beta$. Then by the preceding lemma, $\mbox{rnk}^D(f, g) \geq \beta$.
\end{proof}

\section{Bounding Scott Ranks}

The purpose of this section is to show $\mbox{sr}(T_{P_0}) = \mbox{sr}(T_{P_3}) = \omega_1$. We have already shown that the Scott ranks are at least $\omega_1$, so what remains to show is that every model has countable Scott rank.

\begin{definition}
    Suppose $M$ is a structure. Let $\mathcal{E}$ be a family of quantifier-free definable equivalence relations on $M$, and let $M^{\mathcal{E}}$ denote the expansion of $M$ where we add disjoint sorts $U_E$ for each $E \in \mathcal{E}$ and projections $\pi_E: M \to U_E$ such that for all $a, b \in M$, $M \models E(a, b)$ if and only if $\pi_E(a) = \pi_E(b)$. We identify each $U_E$ with $M/E$.
\end{definition}

\begin{lemma}\label{SRLemma7}
    Suppose $M, \mathcal{E}$ are as above. Then for all $\overline{a}, \overline{b} \in M^{<\omega}$ and for all ordinals $\alpha$, $(M, \overline{a}) \equiv_\alpha (M, \overline{b})$ if and only if $(M^{\mathcal{E}}, \overline{a}) \equiv_\alpha (M^{\mathcal{E}}, \overline{b})$. 
\end{lemma}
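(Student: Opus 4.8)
The plan is to prove the biconditional by induction on $\alpha$, simultaneously for all pairs of home-sort tuples $\overline{a}, \overline{b} \in M^{<\omega}$. Before starting I would record one auxiliary observation, call it \emph{restriction monotonicity}: for any structure $M'$ and any two tuples of equal length, if $(M', \overline{x}) \equiv_\beta (M', \overline{y})$ then deleting a single coordinate (in the same position on both sides) preserves $\equiv_\beta$. This is a one-line induction on $\beta$, since the quantifier-free type of a subtuple is determined by that of the whole, and in the successor step one simply re-inserts the deleted coordinate before invoking the game on the longer tuples.

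For the base case $\alpha = 0$ I would observe that the quantifier-free type of a home-sort tuple $\overline{a}$ in $M^{\mathcal{E}}$ is interdefinable with its quantifier-free type in $M$: the only atomic formulas of the expanded language that can hold of $\overline{a}$ beyond the $\mathcal{L}$-atomics are of the form $\pi_E(a_i) = \pi_E(a_j)$, and these are governed by $E(a_i, a_j)$, which is quantifier-free definable in $M$ by the hypothesis on $\mathcal{E}$; elements of distinct sorts are never equal, so no further atomic information appears. Hence the two notions of $\equiv_0$ agree. The limit case is immediate from the definition of $\equiv_\delta$.

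The successor case $\alpha = \beta+1$ is where the content lies. The reverse implication ($M^{\mathcal{E}} \Rightarrow M$) is painless: an adversary move $e \in M$ in the $M$-game is fed as a home-sort element to the $M^{\mathcal{E}}$-game, and the response $f$ returned there must again be home-sort, since sort membership is part of the $\equiv_0$ type; thus $f \in M$, and the inductive hypothesis at level $\beta$ converts $(M^{\mathcal{E}}, \overline{a}e) \equiv_\beta (M^{\mathcal{E}}, \overline{b}f)$ back to $(M, \overline{a}e) \equiv_\beta (M, \overline{b}f)$. The forward implication ($M \Rightarrow M^{\mathcal{E}}$) is the main obstacle: the adversary may play a \emph{quotient} element $e = \pi_E(a') \in U_E$, and I must produce a response in $U_E$ using only the home-sort game on $M$. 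The trick is to play the preimage $a' \in M$ in the $M$-game, obtaining $b'$ with $(M, \overline{a}a') \equiv_\beta (M, \overline{b}b')$; the inductive hypothesis gives $(M^{\mathcal{E}}, \overline{a}a') \equiv_\beta (M^{\mathcal{E}}, \overline{b}b')$; Lemma~\ref{SRLemma3} applied with the term $\pi_E$ appends the projections to yield $(M^{\mathcal{E}}, \overline{a}a'\,\pi_E(a')) \equiv_\beta (M^{\mathcal{E}}, \overline{b}b'\,\pi_E(b'))$; and restriction monotonicity then deletes the coordinates $a', b'$ to give $(M^{\mathcal{E}}, \overline{a}\,\pi_E(a')) \equiv_\beta (M^{\mathcal{E}}, \overline{b}\,\pi_E(b'))$. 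Since $e = \pi_E(a')$, the element $\pi_E(b')$ is the desired response; the symmetric adversary move is handled identically using symmetry of $\equiv_{\beta+1}$.

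The single point demanding care is precisely this last manoeuvre. Because a quotient element carries strictly less information than any of its home-sort preimages, one cannot naively reduce a quotient move to a home move inside the game and continue playing, and so the naive approach would force the inductive statement to be generalized to tuples containing quotient elements (where the characterization becomes awkward, as the choice of preimage is not canonical). The combination of Lemma~\ref{SRLemma3} (to pass from $a'$ to $\pi_E(a')$) with restriction monotonicity (to forget $a'$ afterwards) \emph{derives} the required $\equiv_\beta$ relation as a conclusion from already-established lemmas rather than by running the game further, which is exactly what keeps the induction confined to home-sort tuples.
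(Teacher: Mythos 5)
Your proof is correct and follows essentially the same route as the paper's: the reverse direction is the easy one, and for the forward direction the paper likewise responds to a quotient-sort move by lifting to a preimage, playing the home-sort game, invoking the inductive hypothesis, and then applying Lemma~\ref{SRLemma3} with the term $\pi_E$ to append the projections. If anything you are slightly more careful than the paper, which leaves implicit the final step of deleting the auxiliary preimage coordinates (your ``restriction monotonicity'').
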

\begin{proof}
It is straightforward to check that for all ordinal $\alpha$, if $(M^{\mathcal{E}}, \overline{a}) \equiv_\alpha (M^{\mathcal{E}}, \overline{b})$ then $(M, \overline{a}) \equiv_\alpha (M, \overline{b})$.

The reverse direction is also an induction on $\alpha$.
For $\alpha = 0$ we use that each $E \in \mathcal{E}$ is quantifier-free definable. Suppose we have verified the claim for all $\beta < \alpha$. Suppose $(M, \overline{a}) \equiv_\alpha (M, \overline{b})$ and $a \in M^{\mathcal{E}}, \beta < \alpha$ are given. If $a$ is in the home sort of $M^{\mathcal{E}}$ then let $a' = a$. Otherwise, let $a'$ be some element of $M$ with $a = [a']_{E}$ for some $E \in \mathcal{E}$. Choose $b'$ with $(M, \overline{a}a') \equiv_\beta (M, \overline{b}b')$. By the inductive hypothesis, $(M^{\mathcal{E}}, \overline{a} a') \equiv_\beta (M^{\mathcal{E}}, \overline{b} b')$. By Lemma~\ref{SRLemma3}, $(M^{\mathcal{E}}, \overline{a}a'a) \equiv_\beta (M^{\mathcal{E}}, \overline{b} b' b)$ as desired.
\end{proof}

\begin{theorem}
    Every model of $T_{P_0} = \mbox{REF(bin)}$ has countable Scott rank.
\end{theorem}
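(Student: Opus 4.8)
The plan is to reduce to colored models of $\Phi_{P_0}$ and then exploit that such a model is assembled from only countably many ``cones'', so the back-and-forth between cones must stabilize below $\omega_1$.

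\textbf{Reduction and setup.} First, applying the Scott-rank lemma relating $\mbox{sr}(T_P)$ and $\mbox{sr}(\mathcal{C}(\Phi_P))$ to the trivial tame expansion $T = T_{P_0}$, each $M \models T_{P_0}$ satisfies $\mbox{sr}(M) \leq \omega + \mbox{sr}((M/E_\infty, c))$, where $E_\infty = \bigcap_n E_n$ and $(M/E_\infty, c) \models \mathcal{C}(\Phi_{P_0})$. So it suffices to bound the Scott rank of an arbitrary colored model of $\mathcal{C}(\Phi_{P_0})$; I write $M$ for such a model and treat the coloring as a harmless unary predicate throughout. Identifying $\mathfrak{M}_{P_0} \cong 2^\omega$ with $E_n(f,g)$ meaning ``$f,g$ agree on the first $n+1$ coordinates'', the splitting axioms force $M$ to be a subset of $2^\omega$ whose tree of $E$-classes is the full binary tree $2^{<\omega}$, with distinct elements distinct branches. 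For $u \in 2^{<\omega}$ let $M_u = \{f \in M : u \subset f\}$; after relabelling levels each $M_u$ is again a colored model of $\Phi_{P_0}$, and there are only countably many cones since $2^{<\omega}$ is countable. I would work in $M^{\mathcal{E}}$ for $\mathcal{E} = \{E_n : n < \omega\}$, so that each $E_n$-class $[a]_{E_n}$, equivalently each cone, is a definable element; by Lemma~\ref{SRLemma7} this affects neither $\equiv_\alpha$ on tuples from $M$ nor $\mbox{sr}(M)$.

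\textbf{Stabilization.} Declare $u \approx_\alpha v$ iff $M_u \equiv_\alpha M_v$ as unpointed colored models. For fixed $M$ the $\approx_\alpha$ form a $\subseteq$-decreasing chain of equivalence relations on the countable set of cones, so they can refine only countably often: each strict refinement permanently separates a new pair of cones, and these pairs inject into a countable set. Hence there is a countable ordinal $\alpha_0 = \alpha_0(M) < \omega_1$ with $M_u \equiv_{\alpha_0} M_v \Rightarrow M_u \equiv_\infty M_v$ for all $u, v$.

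\textbf{Transfer to $\mbox{sr}(M)$.} The key claim is that the $\equiv_\infty$-type of a tuple is determined by its finite meet-structure together with the unpointed $\equiv_\infty$-types of the sibling cones hanging off it. For a singleton $f$ the relevant data is the profile $n \mapsto [M_{s_n(f)}]_{\equiv_\infty}$, where $s_n(f) = (f\restriction n)\frown(1-f(n))$ is the sibling node at level $n$ and $M_{s_n(f)} = [a]_{E_n}$ for any $a$ branching off $f$ at level $n$. If $(M,f) \equiv_{\alpha_0+1} (M,g)$ then, for each $n$, playing such an $a$ forces the response $b$ to branch off $g$ at level $n$ with $(M,f,a) \equiv_{\alpha_0} (M,g,b)$; relativizing to the definable $E_n$-classes $[a]_{E_n}, [b]_{E_n}$ (via Lemma~\ref{SRLemma5}) and forgetting the marked points yields the unpointed $M_{s_n(f)} \equiv_{\alpha_0} M_{s_n(g)}$, hence $\equiv_\infty$ by stabilization. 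As $n$ is arbitrary the profiles agree, and routing each further point through the matched sibling cones builds a back-and-forth with $(M,f) \equiv_\infty (M,g)$. The same analysis applies to $k$-tuples, with the finite meet-tree of the branches and the sibling cones hanging off its nodes playing the role of the profile; matching the finite skeleton costs finitely many rounds and each sibling cone costs $\alpha_0$, uniformly in $k$. Thus $(M,\overline{a}) \equiv_{\alpha_0+\omega} (M,\overline{b}) \Rightarrow (M,\overline{a}) \equiv_\infty (M,\overline{b})$, so $\mbox{sr}(M) \leq \alpha_0 + \omega < \omega_1$ for every model, of any cardinality. I expect the main obstacle to be this last step: making precise that the $\equiv_\infty$-type of a tuple is captured by its finite meet-skeleton plus the per-level unpointed sibling-cone types, and organizing the $k$-tuple bookkeeping so that a single offset $\omega$ over $\alpha_0$ works uniformly in $k$.
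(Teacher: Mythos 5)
Your core idea is the same as the paper's: there are only countably many cones/classes, so back-and-forth equivalence among them stabilizes at some $\alpha_0<\omega_1$, and one then promotes this to a back-and-forth system for tuples. The execution differs in two ways worth noting. First, the paper stabilizes the relation $(N^{\mathcal{E}},u)\equiv_\alpha(N^{\mathcal{E}},v)$ where $u,v$ are $E_n$-classes viewed as elements of the expanded structure $N^{\mathcal{E}}$ (so the ambient model is carried along), whereas you stabilize the unpointed cone structures $M_u$. Second, and more importantly, the paper's back-and-forth relation is simply ``$\overline{a}\sim\overline{b}$ iff $\mathrm{qftp}(\overline{a})=\mathrm{qftp}(\overline{b})$ and $(N,a_i)\equiv_\alpha(N,b_i)$ for each $i$''; because each coordinate carries its full pointed $\equiv_\alpha$-type, the extension step reduces to a single application of the stabilization (locate the deepest $a_i$ with $E_n(a,a_i)$, upgrade $\equiv_\alpha$ to $\equiv_{\alpha+1}$ on the relevant class, and pull back a witness). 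This is exactly the device that dissolves the step you flag as your main obstacle: the finite meet-skeleton and the per-level sibling-cone matching are absorbed into the quantifier-free type together with the coordinatewise pointed types, so no uniform-in-$k$ bookkeeping is needed and no additive $\omega$ offset arises.

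One genuine imprecision to fix in your stabilization step. The counting argument (each strict refinement of $\approx_\alpha$ permanently separates a new pair from a countable set) shows only that there is $\alpha_0<\omega_1$ with $M_u\equiv_{\alpha_0}M_v\Rightarrow M_u\equiv_\beta M_v$ for all $\beta<\omega_1$; it does \emph{not} by itself give $M_u\equiv_\infty M_v$, because a pair of uncountable cones could in principle first be separated at an ordinal $\geq\omega_1$, and you are explicitly working with models of arbitrary cardinality. The paper is careful here, claiming only $\equiv_{\omega_1}$ at this stage and obtaining $\equiv_\infty$ afterwards from the back-and-forth system itself. Your argument survives because the only consequence of stabilization you actually use is $\equiv_{\alpha_0}\Rightarrow\equiv_{\alpha_0+1}$ on cones (to extract a witness one level deeper), and $\equiv_\infty$ of matched cones is recovered a posteriori once the back-and-forth system is in place; but as written the claim ``$\equiv_{\alpha_0}\Rightarrow\equiv_\infty$'' is unjustified at the point where you assert it, and you should restate it as ``$\equiv_{\alpha_0}\Rightarrow\equiv_\beta$ for all $\beta<\omega_1$.''
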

\begin{proof}
It sufficecs to show the same for $\mathcal{C}(\Phi_{P_0})$. 

    Let $M \models \Phi_{P_0}$, and let $c: M \to \omega$. It suffices to show that $N := (M, c)$ has countable Scott rank.

    Let $\mathcal{E} = \{E_{n}: n < \omega\}$ be the refining equivalence relations. Let $\alpha$ be a large enough countable ordinal such that for all $n < \omega$ and for all $u, v\in M/E_n$, if $(N^{\mathcal{E}}, u) \equiv_\alpha (N^{\mathcal{E}}, v)$ then $(N^{\mathcal{E}}, u) \equiv_{\omega_1} (N^{\mathcal{E}}, v)$. We show that $\mbox{sr}(N) \leq \alpha$.

    Towards this, define a relation $\sim$ on pairs of tuples from $N$ by $\overline{a} \sim \overline{b}$ if they are of the same length, say $i_*$, and $\mbox{qftp}^N(\overline{a}) = \mbox{qftp}^N(\overline{b})$ and for each $i< i_*$, and each $(N, a_i) \equiv_\alpha (N, b_i)$. It suffices to show that $\sim$ is a back-and-forth relation.

    So suppose $\overline{a} \sim \overline{b}$ and $a \in N$ is given. Let $n$ be largest such that there is $i < i_*$ with $N \models E_n(a,a_i)$; note that this $i$ is unique, since $\delta_0$ is identically 2, i.e. we have binary splitting. (If we did not have binary splitting a similar argument would work, with some extra care.)

    By Lemma~\ref{SRLemma7} we have $(N^{\mathcal{E}}, a_i) \equiv_\alpha (N^{\mathcal{E}}, b_i)$. By Lemma~\ref{SRLemma3} we have $(N^{\mathcal{E}}, [a_i]_{E_{n+1}}) \equiv_\alpha (N^{\mathcal{E}}, [b_i]_{E_{n+1}})$. By choice of $\alpha$ we have $(N^{\mathcal{E}}, [a_i]_{E_{n+1}}) \equiv_{\alpha+1} (N^{\mathcal{E}}, [b_i]_{E_{n+1}})$. Choose $b \in M$ so that $(N^{\mathcal{E}}, [a_i]_{E_{n+1}} a) \equiv_\alpha (N^{\mathcal{E}}, [b_i]_{E_{n+1}}, b)$. Then this choice of $b$ works.
\end{proof}

\begin{theorem}
Every model of $T_{P_3}$ has countable Scott rank.
\end{theorem}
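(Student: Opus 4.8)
The plan is to mimic the preceding proof for $T_{P_0}$, working throughout with $\mathcal{C}(\Phi_{P_3})$ in place of $T_{P_3}$. Fix $N = (M,c)$ with $M \models \Phi_{P_3}$ and $c : M \to \omega$; by density we may take $M$ to be a dense (equivalently elementary) submodel of $\MM$, so that its elements are pairs $(f,g) \in 2^\omega \times 2^\omega$ with $E_{p_n}((f,g),(f',g'))$ iff $f(n)=f'(n)$ and $E_{q_n}((f,g),(f',g'))$ iff $f\restriction_{n+1}=f'\restriction_{n+1}$ and $g(n)=g'(n)$. Let $\mathcal{E}$ be the family consisting of all $E_{p_n}$, all $E_{q_n}$, and the $f$-prefix relations $E^p_n := \bigwedge_{m<n} E_{p_m}$ (which say $f\restriction_n = f'\restriction_n$). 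Each $E \in \mathcal{E}$ has finitely many classes, so $\bigcup_{E \in \mathcal{E}} M/E$ is countable. As in the $T_{P_0}$ argument I choose a countable ordinal $\alpha_0$ large enough that for every $E \in \mathcal{E}$ and all $u,v \in M/E$, if $(N^{\mathcal{E}},u) \equiv_{\alpha_0} (N^{\mathcal{E}},v)$ then $(N^{\mathcal{E}},u) \equiv_{\omega_1} (N^{\mathcal{E}},v)$; this is possible because only countably many pairs of quotient elements are at issue. By Lemma~\ref{SRLemma7} I may pass freely between $N$ and $N^{\mathcal{E}}$.

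Next I would define a back-and-forth relation $\sim$, declaring $\overline{a} \sim \overline{b}$ when they have equal length and quantifier-free type and each $(N,a_i) \equiv_{\alpha_0} (N,b_i)$, and show it witnesses that $\mbox{sr}(N)$ is bounded by a countable ordinal built from $\alpha_0$. The content is the extension step: given $\overline{a} \sim \overline{b}$ and $a = (f,g) \in M$, produce $b = (f',g') \in M$ with $\overline{a}a \sim \overline{b}b$. Here the crucial structural feature of $P_3$ (as opposed to $P_2$) is that $E_{q_m}$ remembers the entire $f$-prefix $f\restriction_{m+1}$, not merely the coordinate $f(m)$. Concretely, for each tuple element $a_i$ with $f \neq f_i$, let $n_i$ be the least coordinate at which $f$ and $f_i$ differ, and set $n^\ast = \max_i n_i$; then $E_{q_m}(a,a_i)$ fails for every such $i$ once $m \geq n^\ast$, so the coordinates $g(m)$ with $m \geq n^\ast$ are entirely unconstrained by the $E_q$-relations to the tuple and can be fitted by density. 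The behaviour below $n^\ast$ involves only the finitely many prefix-quotients and the $E_{q_m}$ ($m<n^\ast$), whose elements are governed by the stabilization at $\alpha_0$; while the purely crosscutting $E_{p_m}$-pattern of $f$ against the $f_i$, together with the within-fiber $E_{q_m}$-pattern of $g$ against any $a_i$ sharing $f$'s fiber, are binary-crosscutting conditions of the harmless $T_2$-type. Combining these through the quotient projections and Lemma~\ref{SRLemma3} should locate the required $b$.

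I expect the extension step to be the main obstacle, for two reasons. First, unlike the chain $T_{P_0}$, there is no single ``attachment level'' at which $a$ separates from the tuple: the antichain relations $E_{p_m}$ couple $f$ to every $f_i$ at every coordinate, so one must simultaneously match an infinite crosscutting pattern (in the $f$-direction, and in the $g$-direction within each shared fiber) while preserving the rank data carried by the prefix-quotients. Second, one must verify that this can be done while staying inside the fixed submodel $M$, using only density, rather than in all of $\MM$. The whole point is that the prefix-coupling of $E_{q_m}$ confines the potentially high-rank information to the finitely many levels below $n^\ast$, where the countable bound $\alpha_0$ applies; making this confinement precise---and seeing why it genuinely fails for the locally-coupled $P_2$, where $E_{q_m}$ sees only $f(m)$---is the heart of the argument.
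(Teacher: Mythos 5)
Your overall strategy is the one the paper uses: pass to the quotient expansion $N^{\mathcal{E}}$, pick a countable ordinal $\alpha_0$ at which back-and-forth equivalence of quotient elements stabilizes, define $\sim$ by equality of quantifier-free types plus pointwise $\equiv_{\alpha_0}$, and exploit the fact that $E_{q_m}$ remembers the whole $f$-prefix, so the new element decouples from the tuple above a finite level $n^*$. But the extension step, which you correctly identify as the heart of the matter and explicitly leave unresolved, requires two ingredients that are absent from your sketch, and one of your choices would have to be changed for the argument to close.

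First, your family $\mathcal{E}$ is not the right one. Below level $n^*$ the rank-carrying data lives in the $g$-coordinates, so the quotient attached to $a_i$ must remember the pair $(f_i\restriction_{n^*}, g_i\restriction_{n^*})$ jointly; the paper accordingly takes $\mathcal{E}=\{\bigwedge_{m<n}E_{q_m}: n<\omega\}$, whose classes are exactly such pairs of prefixes, and then a \emph{single} quantifier over a quotient element transfers the whole pattern of $g\restriction_{n^*}$ against $g_i\restriction_{n^*}$ to the $b$-side. Your $f$-prefix relations $\bigwedge_{m<n}E_{p_m}$ forget $g$ entirely, and the individual $E_{q_m}$'s see only one $g$-coordinate each, so your stabilization hypothesis (quantified over single pairs $u,v\in M/E$) does not control the joint behaviour of $g\restriction_{n^*}$; you would have to stabilize finite tuples of quotient elements instead, which you do not do. Second, matching the $E_{p_m}$-pattern of the new $b$ against the tuple is an infinite constraint---one condition for every $m<\omega$---and density of $M$ in $\mathfrak{M}$ only controls finite prefixes; your worry about ``staying inside the fixed submodel $M$'' is precisely this issue, and it is not resolved by density. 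The paper's device is to adjoin one element $a_*$ of $M$ as a constant (harmless by Lemma~\ref{SRLemma6}) and prepend it to both tuples via Lemma~\ref{SRLemma3}; binary splitting then forces the first coordinate of each $b_i$ to equal that of $a_i$, and forces the first coordinate of any $b$ with $(N,b)\equiv_1(N,a)$ to equal that of $a$ exactly, so the infinite crosscutting constraint holds automatically rather than by construction. Relatedly, in the case $f\in\{f_i\}$ the element $a$ is the unique realization of its quantifier-free type over $a_i$, and one needs Lemma~\ref{SRLemma4} to conclude that the forced $b$ is $\equiv_{\alpha_0}$-equivalent to $a$; your remark that this case is of ``harmless $T_2$-type'' gestures at the right phenomenon but does not supply the argument.
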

\begin{proof}
Suppose $M \models \Phi_{P_3}$, $c: M \to \omega$ is a coloring and $a_* \in M$. We can suppose $M$ as a submodel of $\mathfrak{M} = (2^\omega \times 2^\omega, \ldots)$ as before.  Let $N = (M, c)(a_*)$ where we add $c$ as a coloring and $a_*$ as a constant. By Lemma~\ref{SRLemma6} it suffices to show that $\mbox{sr}(N) < \omega_1$.

For each $n$, let $E_n$ be the equivalence relation on $N$ defined by $E_n(a, b)$ if and only if $E_{q_m}(a, b)$ for all $m < n$. Equivalently, writing $a = (f, g)$ and $b = (f', g')$, we have $E_n(a, b)$ holds if and only if $f \restriction_n = f' \restriction_n$ and $g \restriction_n = g' \restriction_n$. Let $\mathcal{E} = \{E_n: n < \omega\}$. 

Let $\alpha$ be a large enough countable ordinal so that $\alpha \geq \omega$ and for all $n < \omega$ and for all $u, v \in M/E_n$, if $(N^{\mathcal{E}}, u) \equiv_{\alpha} (N^{\mathcal{E}}, v)$ then $(N^{\mathcal{E}}, u) \equiv_{\omega_1} (N^{\mathcal{E}}, v)$. We claim that $\mbox{sr}(N) \leq \alpha$. 

Let $\sim$ be the relation on $N^{<\omega} \times N^{<\omega}$ defined by putting $\overline{a} \sim \overline{b}$ if they are both tuples of the same length, say $i_*$, and $\mbox{qftp}^N(\overline{a}) = \mbox{qftp}^N(\overline{b})$ and for all $i < i_*$, $(N, a_i) \equiv_\alpha (N, b_i)$. It suffices to show that $\sim$ is a back-and-forth system on $N$.

Suppose $\overline{a} \sim \overline{b}$ and $a$ is given. We can suppose $a_*$, the constant we added to $N$, is equal to $a_0 = b_0$, since prepending $a_*$ to both $\overline{a}$ and $\overline{b}$ does not disturb $\sim$ by Lemma~\ref{SRLemma3}. For each $i < i_*$ write $a_i = (f_i, g_i)$ for $f_i, g_i \in 2^\omega$. Then since $\mbox{qftp}^N(a_0, a_i) = \mbox{qftp}^N(b_0, b_i)$ and $a_0 = b_0$ we necessarily have $b_i$ is of the form $(f_i, g'_i)$ for some $g'_i \in 2^\omega$. Write $a= (f, g)$. There are two cases.

First, suppose $f \in \{f_i: i < i_*\}$. Say $f = f_i$. Then $a$ is the unique realization of its quantifier-free type over $a_i$, so by Lemma~\ref{SRLemma4} there is a (unique) $b$ with $(N, a_i, a) \equiv_\alpha (N, b_i, b)$. In particular $(N, a) \equiv_\alpha (N, b)$. It is easily checked that $\mbox{qftp}^N(\overline{a}a) = \mbox{qftp}^N(\overline{b} b)$ so $\overline{a}a \sim \overline{b} b$ as desired.

Second, suppose $f \not \in \{f_i: i < i_*\}$. Let $n$ be largest such that $f \restriction_n \in \{f_i \restriction_n: i < i_*\}$; say $f \restriction_n = f_i \restriction_n$.

By Lemma~\ref{SRLemma7} we have $(N^{\mathcal{E}}, a_i) \equiv_\alpha (N^{\mathcal{E}}, b_i)$. By Lemma~\ref{SRLemma3} we have $(N^{\mathcal{E}}, [a_i]_{E_n}) \equiv_\alpha (N^{\mathcal{E}}, [b_i]_{E_n})$. By choice of $\alpha$, $(N^{\mathcal{E}}, [a_i]_{E_n}) \equiv_{\alpha+1} (N^{\mathcal{E}}, [b_i]_{E_n})$. Thus we can find some $b \in N$ (i.e. in the home sort of $N^{\mathcal{E}}$) such that $(N^{\mathcal{E}}, [a_i]_{E_n}, a) \equiv_\alpha (N^{\mathcal{E}}, [b_i]_{E_n}, b)$. Thus $(N^{\mathcal{E}}, a) \equiv_\alpha (N^{\mathcal{E}}, b)$. By Lemma~\ref{SRLemma7} we have $(N, a) \equiv_\alpha (N, b)$, and it is easily checked that $\mbox{qftp}^N(\overline{a}a) = \mbox{qftp}^N(\overline{b} b)$ so $\overline{a}a \sim \overline{b} b$ as desired.
\end{proof}

%****

\end{document}